\pdfoutput=1
\documentclass[10pt]{amsart}
\usepackage[T1]{fontenc}
\usepackage[utf8]{inputenc}

\usepackage{amssymb}
\usepackage[colorlinks,pagebackref=true]{hyperref}
\renewcommand*\backref[1]{\ifx#1\relax \else (Cited on p.~#1) \fi}
\usepackage[inline]{enumitem}
\usepackage{cleveref}
\usepackage{caption}
\usepackage{subcaption}
\usepackage{tikz}

\theoremstyle{definition}
\newtheorem{theorem}{Theorem}[section]
\newtheorem{proposition}[theorem]{Proposition}
\newtheorem{corollary}[theorem]{Corollary}
\newtheorem{lemma}[theorem]{Lemma}
\newtheorem{definition}[theorem]{Definition}
\newtheorem{remark}[theorem]{Remark}
\newtheorem{question}[theorem]{Question}
\newtheorem{example}[theorem]{Example}
\AtBeginEnvironment{example}{%
  \pushQED{\qed}%
}
\AtEndEnvironment{example}{\popQED\endexample}

\numberwithin{equation}{section}

\setcounter{MaxMatrixCols}{20}

\title[Jets of monomial ideals and very well-covered graphs]{Jets and principal components of monomial ideals, and very well-covered graphs}
\author[F. Galetto]{Federico Galetto}
\address{Department of Mathematics and Statistics, Cleveland State University, 2121 Euclid Avenue, RT 1515, Cleveland, OH 44115-2215, USA}
\email{f.galetto@csuohio.edu}
\urladdr{https://math.galetto.org}
\thanks{F. Galetto was partially supported by the grant NSF DMS--2200844.}
\author[N. Iammarino]{Nicholas Iammarino}
\address{Department of Mathematics and Statistics, Cleveland State University, 2121 Euclid Avenue, RT 1515, Cleveland, OH 44115-2215, USA}
\email{nickiammarino@gmail.com}
\author[T. Yu]{Teresa Yu}
\address{Department of Mathematics, University of Michigan, 530 Church Street, Ann Arbor, MI 48109-1043, USA}
\email{twyu@umich.edu}
\urladdr{https://sites.google.com/view/teresayu}
\thanks{T. Yu was supported by the grant NSF DGE--2241144.}
\date{June 2024}
\keywords{jet, principal component, monomial ideal, graph, clutter, very well-covered, Hilbert series, Betti numbers}
\subjclass[2020]{Primary 05E40; Secondary 14E18, 05C65, 05C69, 13F55}

\begin{document}

\begin{abstract}
  Motivated by using combinatorics to study jets of monomial ideals,
  we extend a definition of jets from graphs to clutters. We offer
  some structural results on their vertex covers, and show an
  interesting connection between the cover ideal of the jets of a
  clutter and the symbolic powers of the cover ideal of the original
  clutter. We use this connection to prove that jets of very
  well-covered graphs are very well-covered. Next, we turn our
  attention to principal jets of monomial ideals, describing their
  primary decomposition and minimal generating sets. Finally, we give
  formulas to compute various algebraic invariants of principal jets
  of monomial ideals, including their Hilbert series, Betti numbers,
  multiplicity and regularity.
\end{abstract}

\maketitle

\section{Introduction}
\label{sec:introduction}

The tangent bundle of a scheme $X$ can be viewed as a collection of
pairs $(p,v)$ where $p$ is a point of $X$ and $v$ is a tangent vector
to $X$ at $p$.  A vector $v$ is tangent to $X$ at $p$ if it satisfies
some algebraic condition that can be expressed in terms of first-order
differentials. As one knows from calculus, it is possible to improve
upon a linear approximation by using higher order derivatives, and
this is roughly the idea behind jet schemes. An \emph{$s$-jet} of $X$ can be
thought of as a collection $(p,v_1,\dots,v_s)$ where $p$ is a point of
$X$, $v_1$ is a tangent vector to $X$ at $p$, and $v_s$ is a
``vector'' providing an order $s$ differential approximation of $X$
compatible with lower order approximations $p,v_1,\dots,v_{s-1}$. The
collection of all $s$-jets, denoted $\mathcal{J}_s (X)$, has a natural
scheme structure and can therefore be studied with geometric
tools. The concept of jets is attributed to C.~Ehresmann
\cite{MR0063123} (see also \cite[Chapter 4]{MR1337276}) and was first
introduced in the context of differential geometry. J.~Nash was among
the proponents of jets as a tool to study singularities in algebraic
geometry \cite{MR1381967}. Since then, jets have found applications to
a range of topics from motivic integration to birational geometry
\cite{MR1905328,MR1856396,MR1896234,MR2483946}.

If $X$ is a nonsingular variety, then the jet schemes
$\mathcal{J}_s (X)$ are nonsingular varieties (see \cite[Corollary
2.11]{MR2483946}). However, if $X$ has singularities, then its jet
schemes are often reducible (see \cite[Example 2.13]{MR2483946}). A
general description for the components of jet schemes is unknown, but
significant progress has been achieved for specific families of
varieties. Thanks to the work of several authors
\cite{MR2100311,MR2166800,MR2389248,MR3020097,MR3990994}, we have some
understanding of jet schemes for many determinantal varieties of
generic and symmetric matrices. This includes knowing when the jet
schemes are reducible, the number of components and their dimension,
and possibly additional information such as the equations or the
isomorphism class of these components as algebraic varieties. Another
case where partial information is available is that of monomial
schemes. R.A.~Goward and K.E.~Smith \cite{MR2229478} show that the
reduced scheme structure on the jets of monomial schemes is a monomial
scheme, and they explicitly describe monomial generators for the
ideals corresponding to these schemes. In addition, they identify the
components of the jets of a monomial hypersurface.

Motivated by the work of Goward and Smith, F.~Galetto, E.~Helmick, and
M.~Walsh \cite{MR4384023} defined the notion of \emph{jet graphs} in order to
use combinatorics to further study jets of monomial ideals. The idea
is the following: consider a graph $G$, and construct its edge ideal
$I(G)$; the radical of the order $s$ jets of $I(G)$ is the edge ideal
of a graph $\mathcal{J}_s (G)$, the graph of $s$-jets of $G$. The
minimal vertex covers of $\mathcal{J}_s (G)$ correspond to the minimal
primes of its edge ideal, which are also the minimal primes of the
jets of the ideal $I(G)$. Thus, one can use combinatorial data (the
vertex covers of a graph) to study components of the jet scheme of a
monomial ideal. The three authors offer preliminary results on vertex
covers of jet graphs \cite[Propositions 5.2 and 5.3]{MR4384023},
including a complete description for the vertex covers of the jets of
a complete bipartite graph $K_{n,n}$ \cite[Theorem
5.5]{MR4384023}. This last result provides evidence for their
conjecture that the jets of very well-covered graphs are very
well-covered \cite[Conjecture 6.5]{MR4384023}.

The first part of this manuscript builds upon and expands on the
earlier work of Galetto, Helmick, and Walsh by:
\begin{itemize}
\item extending the definition of jets to clutters (i.e., simple
  hypergraphs) (\S \ref{sec:background});
\item establishing structural results on vertex covers of jets of
  clutters (\S \ref{sec:vertex-covers-jets});
\item showing an interesting connection between the cover ideals of
  the jets of a clutter and the symbolic powers of the cover ideal of
  the clutter (\S \ref{sec:cover-ideals-jets});
\item and proving the conjecture on jets of very well-covered graphs
  (\S \ref{sec:very-well-covered}).
\end{itemize}
After we presented a preliminary version of this work, Tài Huy Hà and
Keri Sather-Wagstaff informed us of potential overlap with work of
S.A.~Seyed Fakhari \cite{MR3723124}. Motivated by the study of ideals
with linear resolutions whose (symbolic) powers also have linear
resolutions, the author starts from a graph $G$ and constructs,
without any apparent awareness of the relation to jets, a family of
graphs $G_k$ which are in fact the jet graphs $\mathcal{J}_{k-1}
(G)$. Where Seyed Fakhari shows that polarizing the $k$-th symbolic
power of the cover ideal of $G$ gives the cover ideal of $G_k$
\cite[Lemma 3.4]{MR3723124}, we independently prove \Cref{thm:3},
which achieves the same result for all clutters and establishes a
bijection between the minimal generators of the two ideals. Moreover,
Seyed Fakhari directly shows that when $G$ is very well-covered so are
the graphs $G_k$ \cite[Proposition 3.1(a)]{MR3723124}, thus also
verifying the conjecture of Galetto, Helmick, and Walsh. Meanwhile,
our \Cref{thm:6} offers an independent proof of the conjecture based
on a procedure that allows one to recover the vertex covers of the
jets of an arbitrary graph (see \Cref{exa:3}).  We believe that our
article contains significant generalizations on Seyed Fakhari's work,
and that it provides an interesting new perspective by uncovering a
previously unknown connection between jets and symbolic powers.

Another object of interest concerning jet schemes is the so-called
\emph{principal component}. For a variety $X$, the principal component
$\mathcal{P}_s (X)$ is the closure of the set of jets supported over
the smooth locus of $X$. When $X$ is irreducible, the principal
component $\mathcal{P}_s (X)$ is an irreducible component of the jet
scheme $\mathcal{J}_s (X)$ of the same dimension as
$\mathcal{J}_s (X)$ (see the discussion following \cite[Corollary
2.11]{MR2483946}). Moreover, the natural projection
$\mathcal{J}_s (X) \to X$ restricts to a dominant map
$\mathcal{P}_s (X) \to X$. Thus, $\mathcal{P}_s (X)$ is the
``principal'' component of $\mathcal{J}_s (X)$ in the sense that it
covers a large and significant portion of the jet scheme.  When $X$ is
the determinantal variety defined by the $2\times 2$ minors of a
generic matrix, T.~Košir and B.A.~Sethuraman proved that
$\mathcal{J}_1 (X)$ has only two components: the principal component
$\mathcal{P}_1 (X)$ and a component isomorphic to an affine space
\cite[Proposition 3.3]{MR2166800}. In this case, S.~Ghorpade,
B.~Jonov, and B.A.~Sethuraman showed that Hilbert series of
$\mathcal{P}_1 (X)$ is the square of the Hilbert series of $X$
\cite[Theorem 18]{MR3270176}. Inspired by these results, the second
part of our manuscript contains:
\begin{itemize}
\item a description for the principal components of monomial ideals
  (\S \ref{sec:principal-jets});
\item and formulas to compute the Hilbert series and Betti numbers of
  the principal components of monomial ideals (\S
  \ref{sec:invar-princ-jets}).
\end{itemize}
More specifically, \Cref{thm:7} describes the primary decomposition of
the principal components of a squarefree monomial ideal in terms of
the vertex covers of the corresponding clutter. Then, \Cref{cor:2}
gives a minimal generating set for the ideals of those principal
components which is similar to Goward and Smith's result for jets of
monomial schemes. To compute Hilbert series of principal components of
monomial ideals, we rely upon the connection between the Hilbert
series of a squarefree monomial ideal and the $f$-vector of the
simplicial complex associated to that ideal via Stanley--Reisner
theory. \Cref{thm:8} shows that $f$-vectors coming from principal
components of monomial ideals depend linearly upon the $f$-vector
coming from the original ideals. This shows that, although the simple
squaring formula of Ghorpade, Jonov, and Sethuraman does not apply to
monomial ideals, the Hilbert series of principal components can still
be easily computed from the same invariant of the original ideal. We
show a similar linear dependence also holds for the Betti numbers of
principal components of monomial ideals in \Cref{thm:9}.

\subsection*{Acknowledgements}
We are grateful for the feedback on this work that we received from
many mathematicians, including Jack Huizenga, Tài Huy Hà, Keri
Sather-Wagstaff, Alexandra Seceleanu, Joseph Skelton, Ivan Soprunov,
Zach Teitler, and Adam Van Tuyl. Part of this work began during the
2021 Michigan Research Experience for Graduates (MREG), which was
partially supported by the Univeristy of Michigan Rackham Graduate
School through its Faculty Allies for Diversity program.

\section{Background on Clutters and Jets}
\label{sec:background}

In this section, we provide preliminaries on clutters and introduce their jets, which are the main combinatorial objects of study in this paper. We also provide background on edge ideals and ideals of covers for clutters. We refer the reader to \cite{MR3362802} for further background on these topics.

A \emph{clutter}
with vertex set $X$ is a family of subsets of $X$, called edges, none
of which is included in another. The set of vertices and edges of a
clutter $C$ are denoted $V(C)$ and $E(C)$, respectively. Clutters
are also referred to as simple hypergraphs in the literature.

Let $\Bbbk [X]$ denote the ring of polynomials with variables given by
the elements of $X$, and coefficients in the field $\Bbbk$. Given a
clutter $C$ with vertex set $X$, the \emph{edge ideal} of $C$, denoted
$I(C)$, is the ideal of $\Bbbk [X]$ generated by all monomials
$x_1 \cdots x_r$ such that $\{x_1,\dots,x_r\}\in E(C)$; note that the given generating
set is minimal. The assignment $C\mapsto I(C)$ establishes a
one-to-one correspondence between the family of clutters and the
family of squarefree monomial ideals. It is worth mentioning that
squarefree monomial ideals are also in bijection with simplicial
complexes, which constitutes the basis for Stanley--Reisner theory; we
will make use of this connection in \S \ref{sec:invar-princ-jets}.

The notion of jets originates in geometry. For an introduction in the
context of algebraic geometry, the reader may consult \cite[\S
2]{MR2483946}. We present a definition of jets for ideals in a
polynomial ring. Our definition is compatible with the construction of
jet schemes of an affine variety starting from its defining ideal.

Given a set $X$ and $s\in\mathbb{N}$, let
\begin{equation*}
  \mathcal{J}_s (X) := \bigcup_{x\in X} \{x_0,\dots,x_s\}
\end{equation*}
and define a ring homomorphism
$\phi_s \colon \Bbbk [X] \to \Bbbk [\mathcal{J}_s (X)] [t] / \langle
t^{s+1}\rangle$ by setting
\begin{equation*}
  \phi_s (x) := \sum_{j=0}^s x_j t^j
\end{equation*}
for every $x\in X$.

\begin{definition}\label{def:1}
  Let $I=\langle f_1,\dots,f_r\rangle$ be an ideal of $\Bbbk [X]$. The
  \emph{ideal of $s$-jets of $I$}, denoted $\mathcal{J}_s (I)$, is the
  ideal of $\Bbbk [\mathcal{J}_s (X)]$ generated by the polynomials
  $f_{i,j}$ determined by the equalities
  \begin{equation*}
    \phi_s (f_i) = \sum_{j=0}^s f_{i,j} t^j
  \end{equation*}
  for every $i\in \{1,\dots,r\}$.
\end{definition}

\begin{example}\label{exa:1}
  Consider the ideal $I=\langle xy,yz\rangle$ of $\Bbbk [x,y,z]$. We have
  \begin{equation*}
    \begin{split}
      \phi_2 (xy) &= (x_0+x_1 t +x_2 t^2)(y_0+y_1 t +y_2 t^2)\\
                  &= x_0 y_0 + (x_0 y_1 +x_1 y_0) t + (x_0 y_2 +x_1 y_1 + x_2 y_0) t^2,\\
      \phi_2 (yz) &= (y_0+y_1 t +y_2 t^2)(z_0+z_1 t +z_2 t^2)\\
                  &= y_0 z_0 + (y_0 z_1 +y_1 z_0) t + (y_0 z_2 +y_1 z_1 + y_2 z_0) t^2.
    \end{split}
  \end{equation*}
  Therefore, the ideals of first and second jets of $I$ are:
  \begin{equation*}
    \begin{split}
      \mathcal{J}_1 (I) = \langle &x_0 y_0,y_0 z_0, x_0 y_1 +x_1 y_0,
                          y_0 z_1 +y_1 z_0 \rangle.\\
      \mathcal{J}_2 (I) = \langle &x_0 y_0,y_0 z_0, x_0 y_1 +x_1 y_0,
                                    y_0 z_1 +y_1 z_0,\\
      &x_0 y_2 +x_1 y_1 + x_2 y_0,
                          y_0 z_2 +y_1 z_1 + y_2 z_0\rangle.
    \end{split}
  \end{equation*}
\end{example}

The following theorem paraphrases a result of R.A.~Goward and K.~Smith
\cite[Theorem 2.1]{MR2229478}.

\begin{theorem}\label{thm:1}
  If $I$ is a squarefree monomial ideal in $\Bbbk [X]$, then, for
  every $s\in \mathbb{N}$, $\sqrt{\mathcal{J}_s (I)}$ is a squarefree
  monomial ideal of $\Bbbk [\mathcal{J}_s (X)]$. Moreover, the ideal
  $\sqrt{\mathcal{J}_s (I)}$ is minimally generated by the monomials
  $x_{1,i_1} \cdots x_{r,i_r}$ such that $x_1 \cdots x_r$ is a minimal
  generator of $I$, and $i_1,\dots,i_r\in \mathbb{N}$ satisfy
  $i_1+\dots+i_r \leqslant s$.
\end{theorem}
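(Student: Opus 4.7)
The strategy is to identify $\sqrt{\mathcal{J}_s (I)}$ with the squarefree monomial ideal
\begin{equation*}
J := \langle x_{1,i_1}\cdots x_{r,i_r} : x_1\cdots x_r \text{ is a minimal generator of } I,\ i_1+\cdots+i_r\leq s\rangle,
\end{equation*}
from which both conclusions of the theorem follow at once. The easy containment $\sqrt{\mathcal{J}_s (I)}\subseteq J$ comes from computing $\mathcal{J}_s (I)$ with the squarefree monomial generating set of $I$ (permissible since $\mathcal{J}_s (I)$ does not depend on the choice of generators). Expanding
\begin{equation*}
\phi_s (x_1\cdots x_r) = \prod_{k=1}^r\Bigl(\sum_{j=0}^s x_{k,j}t^j\Bigr),
\end{equation*}
the coefficient of $t^j$ is $\sum_{i_1+\cdots+i_r=j}x_{1,i_1}\cdots x_{r,i_r}$, a sum of monomials each of which lies in $J$ for $j\leq s$. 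Since $J$ is radical, $\sqrt{\mathcal{J}_s (I)}\subseteq J$.

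For the reverse containment I would show that every prime $\mathfrak{p}\supseteq \mathcal{J}_s (I)$ contains every listed generator of $J$. Let $K$ be the fraction field of $\Bbbk [\mathcal{J}_s (X)]/\mathfrak{p}$, and write $p_k := \sum_{j=0}^s \bar{x}_{k,j}t^j\in K[t]$ for the image of $\phi_s (x_k)$. For any minimal generator $m_i = x_1\cdots x_r$ of $I$, the assumption that all coefficients of $\phi_s (m_i)$ lie in $\mathfrak{p}$ says exactly that $\prod_{k=1}^r p_k$ is divisible by $t^{s+1}$ in $K[t]$. Since $K[t]$ is a domain and $t$-adic order is additive under products,
\begin{equation*}
\sum_{k=1}^r \operatorname{ord}_t(p_k) \geq s+1.
\end{equation*}
Given any tuple $(i_1,\ldots,i_r)$ with $i_1+\cdots+i_r\leq s$, pigeonhole forces $\operatorname{ord}_t(p_k) > i_k$ for some $k$; this is precisely the statement $\bar{x}_{k,i_k}=0$, i.e., $x_{k,i_k}\in \mathfrak{p}$. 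Hence $x_{1,i_1}\cdots x_{r,i_r}\in\mathfrak{p}$, as required.

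Finally, the listed generators of $J$ form a minimal generating set: within a single minimal generator $x_1\cdots x_r$ of $I$, distinct tuples $(i_1,\ldots,i_r)\neq(i_1',\ldots,i_r')$ produce monomials involving different variables in at least one slot ($x_{k,i_k}\neq x_{k,i_k'}$) and are therefore incomparable under divisibility, while across distinct minimal generators $m\neq m'$ of $I$ any divisibility relation would force the underlying support of one in $X$ to be contained in that of the other, contradicting minimality of the generating set of $I$. I expect the only real obstacle to be the reverse containment, and more specifically the translation of ``the truncated product $\prod \phi_s (x_k)$ vanishes modulo $t^{s+1}$'' into the sharp additive inequality on $t$-adic orders; once that is made rigorous by passing to the fraction field, the remainder is a clean pigeonhole argument and a routine minimality check.
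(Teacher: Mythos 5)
The paper does not prove this theorem: it is stated as a paraphrase of Goward and Smith's Theorem~2.1, and no argument is supplied in the manuscript, so there is no in-paper proof to compare against. Your blind proof is nonetheless correct and self-contained. The easy containment $\sqrt{\mathcal{J}_s(I)}\subseteq J$ works because each generator $f_{i,j}$ of $\mathcal{J}_s(I)$ (with $j\leqslant s$) is a sum of monomials, every one of which is among the listed generators of $J$, and $J$ is a squarefree monomial ideal, hence radical. The reverse containment is the crux, and your mechanism is sound: for a prime $\mathfrak{p}\supseteq \mathcal{J}_s(I)$, the vanishing in $K$ of the coefficients of $t^0,\dots,t^s$ in $\prod_k p_k$ says precisely that $\operatorname{ord}_t\bigl(\prod_k p_k\bigr)\geqslant s+1$ (a polynomial whose lowest $s+1$ coefficients vanish is divisible by $t^{s+1}$), and since $K[t]$ is a domain the $t$-adic order is additive, giving $\sum_k\operatorname{ord}_t(p_k)\geqslant s+1$; your pigeonhole then produces a slot $k$ with $\bar x_{k,i_k}=0$, and this goes through in the degenerate case $p_k=0$ as well, since there $\operatorname{ord}_t(p_k)=\infty$. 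The final minimality check is routine but correct: within a single minimal generator of $I$, distinct exponent tuples yield jet monomials that differ in at least one variable slot and so are incomparable, while a divisibility between jet monomials coming from distinct minimal generators of $I$ would force a divisibility between those generators in $\Bbbk[X]$, contradicting minimality of the original generating set.
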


\begin{remark}\label{rem:1}
  With the notation of \Cref{thm:1}, applying the construction of
  \Cref{def:1} to a minimal generator $x_1 \cdots x_r$ of $I$ gives
  \begin{equation*}
    \phi_s (x_1 \cdots x_r) = \sum_{j=0}^s
    \left( \sum_{i_1+\dots +i_r = j}  x_{1,i_1} \cdots x_{r,i_r} \right) t^j.
  \end{equation*}
  Therefore, the minimal generators of $\sqrt{\mathcal{J}_s (I)}$ are
  exactly the terms of the polynomials in the generating set of
  $\mathcal{J}_s (I)$ described in \Cref{def:1}.
\end{remark}

Given the correspondence between squarefree monomial ideals and
clutters, one can associate a clutter to this radical of the ideal of
$s$-jets.  Generalizing the definition of jet graphs given in \cite[\S
2]{MR4384023}, we define the jets of a clutter as follows.

\begin{definition}\label{def:2}
  Let $C$ be a clutter with vertex set $X$. For $s\in \mathbb{N}$, the
  \emph{clutter of $s$-jets of $C$}, denoted $\mathcal{J}_s (C)$, is
  the clutter with vertex set $\mathcal{J}_s (X)$ whose edge ideal is
  given by $\sqrt{\mathcal{J}_s (I(C))}$.
\end{definition}

\begin{example}\label{exa:2}
  Consider the graph $G$ with vertex set $\{x,y,z\}$ whose edge ideal
  is given by $I(G) = \langle xy,yz\rangle$ (the same ideal as in
  \Cref{exa:1}). By \Cref{thm:1}, we have
  \begin{equation*}
    \begin{split}
      \sqrt{\mathcal{J}_1 (I(G))} &= \langle x_0 y_0,y_0 z_0, x_0 y_1,x_1 y_0,
                          y_0 z_1,y_1 z_0 \rangle,\\
      \sqrt{\mathcal{J}_2 (I(G))} &= \langle x_0 y_0,y_0 z_0, x_0 y_1,x_1 y_0,
                          y_0 z_1,y_1 z_0, x_0 y_2,x_1 y_1,x_2 y_0,
                          y_0 z_2,y_1 z_1, y_2 z_0\rangle.\\
    \end{split}
  \end{equation*}
  The graphs $G, \mathcal{J}_1 (G)$, and $\mathcal{J}_2 (G)$ are
  pictured in \Cref{fig:1}.
  \begin{figure}[htb]
    \centering
    \begin{subfigure}[b]{0.3\textwidth}
      \centering
      \begin{tikzpicture}[xscale=1.2]
        \tikzset{vertex/.style = {shape=circle,draw,thick,inner sep=2pt}}
        \tikzset{edge/.style = {thick}}
        \tikzset{every label/.append style={font=\small}}

        \node[vertex] (x) at (1,0) [label=below:{$x$}] {};
        \node[vertex] (y) at (2,0) [label=below:{$y$}] {};
        \node[vertex] (z) at (3,0) [label=below:{$z$}] {};

        \draw[edge] (x) -- (y) -- (z);
      \end{tikzpicture}
      \caption{$G$}
    \end{subfigure}
    \hfill
    \begin{subfigure}[b]{0.3\textwidth}
      \centering
      \begin{tikzpicture}[xscale=1.2]
        \tikzset{vertex/.style = {shape=circle,draw,thick,inner sep=2pt}}
        \tikzset{edge/.style = {thick}}
        \tikzset{every label/.append style={font=\small}}

        \node[vertex] (x0) at (1,0) [label=below:{$x_0$}] {};
        \node[vertex] (y0) at (2,0) [label=below:{$y_0$}] {};
        \node[vertex] (z0) at (3,0) [label=below:{$z_0$}] {};
        \node[vertex] (x1) at (1,1) [label=above:{$x_1$}] {};
        \node[vertex] (y1) at (2,1) [label=above:{$y_1$}] {};
        \node[vertex] (z1) at (3,1) [label=above:{$z_1$}] {};

        \draw[edge] (x0) -- (y0) -- (z0);
        \draw[edge] (x1) -- (y0) -- (z1);
        \draw[edge] (x0) -- (y1) -- (z0);
      \end{tikzpicture}
      \caption{$\mathcal{J}_1 (G)$}
    \end{subfigure}
    \hfill
    \begin{subfigure}[b]{0.3\textwidth}
      \centering
      \begin{tikzpicture}[xscale=1.2]
        \tikzset{vertex/.style = {shape=circle,draw,thick,inner sep=2pt}}
        \tikzset{edge/.style = {thick}}
        \tikzset{every label/.append style={font=\small}}

        \node[vertex] (x0) at (1,0) [label=below:{$x_0$}] {};
        \node[vertex] (y0) at (2,0) [label=below:{$y_0$}] {};
        \node[vertex] (z0) at (3,0) [label=below:{$z_0$}] {};
        \node[vertex] (x1) at (1,1) [label=above:{$x_1$}] {};
        \node[vertex] (y1) at (2,1) [label=above:{$y_1$}] {};
        \node[vertex] (z1) at (3,1) [label=above:{$z_1$}] {};
        \node[vertex] (x2) at (1,2) [label=above:{$x_2$}] {};
        \node[vertex] (y2) at (2,2) [label=above:{$y_2$}] {};
        \node[vertex] (z2) at (3,2) [label=above:{$z_2$}] {};

        \draw[edge] (x0) -- (y0) -- (z0);
        \draw[edge] (x1) -- (y0) -- (z1);
        \draw[edge] (x0) -- (y1) -- (z0);
        \draw[edge] (x1) -- (y1) -- (z1);
        \draw[edge] (x2) -- (y0) -- (z2);
        \draw[edge] (x0) -- (y2) -- (z0);
      \end{tikzpicture}
      \caption{$\mathcal{J}_2 (G)$}
    \end{subfigure}
    \caption{A path graph, and its first and second jets.}
    \label{fig:1}
  \end{figure}
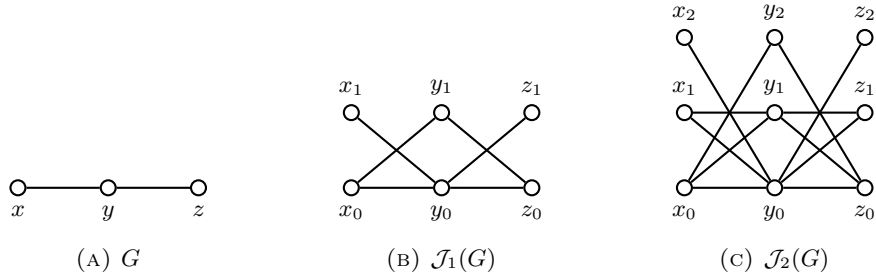
\end{example}

The following result describes the edges of a jet clutter, and is an
immediate consequence of Theorem \ref{thm:1}.

\begin{proposition}\label{pro:1}
  Let $C$ be a clutter with vertex set $X$, and let
  $s\in\mathbb{N}$. For every $\{x_1,\dots,x_r\} \subseteq X$ and
  $i_1,\dots,i_r\in \mathbb{N}$, the set
  $\{x_{1,i_1},\dots,x_{r,i_r}\}$ is an edge of the clutter
  $\mathcal{J}_s (C)$ if and only if $\{x_1,\dots,x_r\}$ is an edge of
  $C$ and $i_1+\dots+i_r \leqslant s$.
\end{proposition}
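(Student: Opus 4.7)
The plan is to translate the statement about edges of $\mathcal{J}_s(C)$ into a statement about minimal generators of its edge ideal, and then invoke \Cref{thm:1}. Recall that the correspondence $C \mapsto I(C)$ between clutters on a vertex set and squarefree monomial ideals is bijective, and under it edges of a clutter correspond exactly to minimal generators of its edge ideal: an edge $\{y_1,\dots,y_r\}$ corresponds to the monomial $y_1 \cdots y_r$, and vice versa. Applying this correspondence to the clutter $\mathcal{J}_s(C)$ on the vertex set $\mathcal{J}_s(X)$, whose edge ideal is $\sqrt{\mathcal{J}_s(I(C))}$ by \Cref{def:2}, the edges of $\mathcal{J}_s(C)$ are in bijection with the minimal monomial generators of $\sqrt{\mathcal{J}_s(I(C))}$.

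Next I would apply \Cref{thm:1} to describe those minimal generators explicitly: they are exactly the monomials of the form $x_{1,i_1} \cdots x_{r,i_r}$ where $x_1 \cdots x_r$ is a minimal generator of $I(C)$ and $i_1+\dots+i_r \leqslant s$. Translating back through the edge-ideal correspondence on both sides, the condition that $x_1\cdots x_r$ is a minimal generator of $I(C)$ is equivalent to $\{x_1,\dots,x_r\}$ being an edge of $C$, and the monomial $x_{1,i_1} \cdots x_{r,i_r}$ corresponds to the set $\{x_{1,i_1},\dots,x_{r,i_r}\}$. Combining these two equivalences gives both directions of the biconditional.

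The only small technical check is that the set $\{x_{1,i_1},\dots,x_{r,i_r}\}$ genuinely has $r$ distinct elements whenever $\{x_1,\dots,x_r\}$ does, but this is immediate since the variables $x_{j,i_j}$ in $\mathcal{J}_s(X)$ are indexed per vertex: distinct $x_j$ yield distinct variables $x_{j,i_j}$ regardless of the values of the $i_j$, which is exactly why the resulting monomial is squarefree. There is no serious obstacle here, as the proposition is essentially a repackaging of \Cref{thm:1} through the clutter/edge-ideal dictionary; the statement is flagged in the text as immediate, and my proof simply makes that translation explicit.
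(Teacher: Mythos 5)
Your proof is correct and takes exactly the route the paper intends: the paper gives no separate argument for \Cref{pro:1} and simply declares it ``an immediate consequence of \Cref{thm:1},'' which is precisely the translation through the clutter/edge-ideal dictionary that you spell out. The one technical point you flagged (that distinct vertices $x_j$ yield distinct variables $x_{j,i_j}$ independently of the $i_j$, so the associated monomial is genuinely squarefree and the set has $r$ elements) is the right thing to check and you handled it correctly.
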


Let $C$ be a clutter with vertex set $X$.  A subset $W\subseteq X$ is
a \emph{vertex cover} of $C$ if every edge of $C$ contains at least
one vertex of $W$; if no proper subset of $W$ is a vertex cover of
$C$, then we say $W$ is \emph{minimal}.  We recall some ideas from combinatorial
commutative algebra that will be used throughout this work in relation
to vertex covers of a clutter.  The \emph{Alexander dual} of a clutter
$C$ with vertex set $X$, denoted $C^\vee$, is the clutter with the
same vertex set whose edges are the minimal vertex covers of $C$. The
edge ideal of $C^\vee$, denoted $I_c (C)$, is called the \emph{ideal
  of covers} of $C$. The
irredundant irreducible decomposition of $I_c (C)$ can be expressed in
terms of the edges of $C$ \cite[Theorem 6.3.39]{MR3362802}; namely, we
have
\begin{equation*}
  I_c (C) = \bigcap_{\{x_1,\dots,x_r\}\in E(C)} \langle x_1,\dots,x_r\rangle.
\end{equation*}
In addition to the ideal of covers $I_c (C)$, we will be interested in
its symbolic powers. The symbolic
powers of $I_c (C)$ have a concrete description that follows from
\cite[Proposition 4.3.25]{MR3362802} and the above decomposition of
$I_c (C)$; namely, for every $k\in \mathbb{N}$, we have
\begin{equation}\label{eq:1}
  I_c (C)^{(k)} = \bigcap_{\{x_1,\dots,x_r\}\in E(C)} \langle x_1,\dots,x_r\rangle^k.
\end{equation}
Similar to how monomials in $I_c (C)$ correspond to vertex covers of
$C$, monomials in $I_c (C)^{(k)}$ have an interesting combinatorial
interpretation. If $m \in I_c (C)^{(k)}$ is a monomial, then
\Cref{eq:1} implies that, for every edge $\{x_1,\dots,x_r\}$ of $C$,
there are natural numbers $i_1,\dots,i_r$ such that
$i_1+\cdots+ i_r \geqslant k$ and $x_1^{i_1}\cdots x_r^{i_r}$ divides
$m$. One can imagine a multiset $W$ containing each vertex $x\in X$ a
number of times equal to the exponent of $x$ in $m$; every edge of $C$
intersects this multiset in at least $k$ elements, i.e., the multiset
``covers'' each edge at least $k$ times. Rather than working with
multisets, we say a \emph{$k$-cover} of $C$ is the exponent vector of
a monomial $I_c (C)^{(k)}$ (see \cite[Definition 13.2.5]{MR3362802}
and preceding discussion for more details). In particular, a
\emph{minimal $k$-cover} of $C$ is the exponent vector of a minimal
generator of $I_c (C)^{(k)}$.

A $k$-cover of $C$ is called \emph{reducible} if there exist positive
integers $k_1,k_2$ with $k=k_1+k_2$, and monomials
$m_1\in I_c (C)^{(k_1)}, m_2\in I_c (C)^{(k_2)}$ such that
$m=m_1 m_2$. An \emph{irreducible} $k$-cover is one that is not
reducible. The irreducible
$k$-covers are the minimal generators of the symbolic Rees algebra of
$I_c (C)$
\begin{equation*}
  R_s (I_c (C)) = \bigoplus_{k\in \mathbb{N}} I_c (C)^{(k)} t^k
  \subset \Bbbk [X][t]
\end{equation*}
considered as a $\Bbbk$-algebra (see \cite[Corollary
13.2.19]{MR3362802}). This is a special case of the finite generation
of symbolic Rees algebras of monomial ideals due to J.~Herzog,
T.~Hibi, and N.V.~Trung \cite{MR2298826}.

\section{Vertex covers of jets}
\label{sec:vertex-covers-jets}

We use minimal vertex covers of a clutter $C$ to describe certain minimal vertex covers of the jets of the clutter $C$, as well as certain primary components of the ideal of $s$-jets $\mathcal{J}_s(I(C))$.

We begin with a basic structural property that all
minimal vertex covers of the jets of a clutter must satisfy.

\begin{lemma}\label{lem:1}
  Let $C$ be a clutter with vertex set $X$, and let $W$ be a minimal
  vertex cover of $\mathcal{J}_s (C)$ for some $s\in \mathbb{N}$. If
  $x\in X$ and $x_i \in W$ for some $i\in\{0,\dots,s\}$, then
  $x_j\in W$ for every $j\in \{0,\dots,i\}$.
\end{lemma}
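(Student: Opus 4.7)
The claim is a ``downward closure'' property: within any fixed vertex $x \in X$, the indices $j$ for which $x_j$ belongs to a minimal cover $W$ form an initial segment $\{0, 1, \ldots, i\}$. The natural approach is by contradiction using minimality of $W$ together with the characterization of edges of $\mathcal{J}_s(C)$ given in \Cref{pro:1}.

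The plan is as follows. Assume $x_i \in W$ and fix some $j \in \{0, \ldots, i-1\}$ (the case $j = i$ is trivial). Since $W$ is a \emph{minimal} vertex cover, the set $W \setminus \{x_i\}$ fails to cover some edge, so there must exist an edge $e$ of $\mathcal{J}_s(C)$ that contains $x_i$ and no other vertex of $W$. By \Cref{pro:1}, we may write $e = \{x_{1,i_1}, \ldots, x_{r,i_r}\}$ where $\{x_1, \ldots, x_r\} \in E(C)$ and $i_1 + \cdots + i_r \leqslant s$; after relabeling, assume $x = x_1$ and $i = i_1$.

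Now I would construct a second edge by lowering the index on $x$: set
\begin{equation*}
  e' := \{x_{1,j}, x_{2,i_2}, \ldots, x_{r,i_r}\}.
\end{equation*}
Since $j < i = i_1$, we have $j + i_2 + \cdots + i_r < i_1 + i_2 + \cdots + i_r \leqslant s$, and $\{x_1, \ldots, x_r\}$ is still an edge of $C$, so \Cref{pro:1} guarantees that $e'$ is an edge of $\mathcal{J}_s(C)$. Because $W$ covers every edge, $e' \cap W \neq \emptyset$. However, by the choice of $e$, none of $x_{2,i_2}, \ldots, x_{r,i_r}$ lies in $W$. The only remaining candidate is $x_{1,j} = x_j$, forcing $x_j \in W$, as desired.

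The only potential subtlety is the edge case $r = 1$, where $e \setminus \{x_i\}$ is empty; but then $e' = \{x_{1,j}\}$ is itself an edge that must meet $W$, so the conclusion $x_j \in W$ still follows immediately. I do not anticipate a serious obstacle: the argument is essentially a one-line application of \Cref{pro:1} once the right replacement edge $e'$ is identified.
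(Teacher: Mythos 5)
Your proof is correct, and it takes a genuinely different and more direct route than the paper's. The paper argues by induction on $s$: for $s>0$ it intersects $W$ with $\mathcal{J}_{s-1}(X)$, extracts a minimal vertex cover $W'$ of $\mathcal{J}_{s-1}(C)$ inside that intersection, and splits into cases according to whether $x_i\in W'$ (invoking the inductive hypothesis) or $x_i\notin W'$ (using a replacement argument mediated through $W'$). Your argument avoids both the induction and the auxiliary cover entirely. From minimality of $W$ you immediately extract an edge $e$ with $e\cap W=\{x_i\}$; since $x_i\in e$, the edge description of \Cref{pro:1} forces $x$ to be one of the underlying vertices of $e$ with jet index $i$, and replacing that index by $j<i$ produces another edge $e'$ of $\mathcal{J}_s(C)$ whose remaining vertices all lie outside $W$ by the choice of $e$, so $x_j\in W$ is the only way for $W$ to cover $e'$. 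Both proofs ultimately rely on the same two ingredients, minimality of $W$ and the characterization of jet edges in \Cref{pro:1}, but yours applies them more economically, makes the ``lowering an index keeps it an edge'' mechanism explicit, and dispenses with the inductive scaffolding. Your remark on the $r=1$ edge case is also apt, though it needs no special handling: when $r=1$ the set $e'=\{x_j\}$ is still an edge and the argument goes through unchanged.
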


\begin{proof}
  We proceed by induction on $s$. For $s=0$, the statement is
  trivial. Consider $s>0$ and assume the statement is true for all
  minimal vertex covers of $\mathcal{J}_{s-1} (C)$. Let $W$ be a
  minimal vertex cover of $\mathcal{J}_s (C)$. We will prove that if
  $x_i \in W$ for some $i\in\{1,\dots,s\}$, then $x_{i-1}\in W$; this
  is equivalent to the statement of the lemma.

  The set $W\cap \mathcal{J}_{s-1} (X)$ is a vertex cover of
  $\mathcal{J}_{s-1} (C)$ since
  $E(\mathcal{J}_{s-1} (C)) \subseteq E(\mathcal{J}_s (C))$. Let
  $W' \subseteq W\cap \mathcal{J}_{s-1} (X)$ be a minimal vertex cover
  of $\mathcal{J}_{s-1} (C)$.  If $x_i \in W'$, then
  $x_{i-1} \in W'\subseteq W$ by the inductive hypothesis. If
  $x_i \notin W'$, assume $x_{i-1} \notin W$.  Every edge of
  $\mathcal{J}_s (C)$ containing the vertex $x_i$ has the form
  $\{x_{i},y_{1,j_1},\dots,y_{r,j_r}\}$, where $\{x,y_1,\dots,y_r\}$
  is an edge of $C$ and $i+j_1+\dots+j_r \leqslant s$. Since
  $(i-1)+j_1+\dots+j_r \leqslant s-1$, we deduce that
  $\{x_{i-1},y_{1,j_1},\dots,y_{r,j_r}\}$ is an edge of
  $\mathcal{J}_{s-1} (C)$. Because we assumed that $x_{i-1}\notin W$
  and $W'\subseteq W$ is a vertex cover of $\mathcal{J}_{s-1} (C)$, we
  must have $y_{k,j_k}\in W$ for some index $k$. Hence, the edge
  $\{x_{i},y_{1,j_1},\dots,y_{r,j_r}\}$ is covered by the vertex
  $y_{k,j_k}$. Therefore, $W\setminus \{x_i\}$ is a vertex cover of
  $\mathcal{J}_s (C)$, which contradicts the minimality of $W$. We
  conclude that $x_{i-1} \in W$.
\end{proof}

\begin{remark}\label{rem:2}
  The work of Goward and Smith includes a complete description of the
  minimal vertex covers of the jets of a clutter with a single edge
  \cite[Theorem 2.2 (1)]{MR2229478}. We note that the condition
  imposed by \Cref{lem:1} is an implicit component of their
  description.
\end{remark}

Recall that given a set $X$ and $s\in\mathbb{N}$, we define
\begin{equation*}
  \mathcal{J}_s (X) := \bigcup_{x\in X} \{x_0,\dots,x_s\}.
\end{equation*}
The next result shows that applying this construction to a minimal
vertex cover of a clutter $C$ yields a minimal vertex cover of its jets, as well as describes certain primary components of the ideal of jets $\mathcal{J}_s(I(C))$.
This generalizes the graph-theoretic statements of \cite[Proposition 5.3]{MR4384023} and
  \cite[Theorem 2.7]{2201.04164} to clutters.

\begin{theorem}\label{thm:2}
  Let $C$ be a clutter with vertex set $X$, and let $W$ be a minimal
  vertex cover of $C$.
  \begin{enumerate}[label=(\arabic*)]
  \item\label{item:1} For every $s\in \mathbb{N}$, the set
    $\mathcal{J}_s (W)$ is a minimal vertex cover of
    $\mathcal{J}_s (C)$. Equivalently, for every $s\in \mathbb{N}$,
    the ideal $\langle \mathcal{J}_s (W) \rangle$ of
    $\Bbbk [\mathcal{J}_s (X)]$ is a minimal prime of
    $\mathcal{J}_s (I(C))$.
  \item\label{item:2} For every $s\in \mathbb{N}$, the ideal
    $\langle \mathcal{J}_s (W) \rangle$ of $\Bbbk [\mathcal{J}_s (X)]$
    is a primary component of $\mathcal{J}_s (I(C))$.
  \end{enumerate}
\end{theorem}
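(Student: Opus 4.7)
The plan is to address the two parts in sequence, using the combinatorial description from part (1) to set up a localization argument for part (2).

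For part (1), I would first check that $\mathcal{J}_s(W)$ is a vertex cover of $\mathcal{J}_s(C)$. Given any edge $\{x_{1,i_1}, \ldots, x_{r,i_r}\}$ of $\mathcal{J}_s(C)$, \Cref{pro:1} says $\{x_1, \ldots, x_r\}$ is an edge of $C$, which must be covered by some $x_k \in W$; then $x_{k, i_k} \in \mathcal{J}_s(W)$ by definition. For minimality, I would show that for each $x_{k,j} \in \mathcal{J}_s(W)$ there is an edge of $\mathcal{J}_s(C)$ covered only by $x_{k,j}$: since $W$ is a minimal cover of $C$, there exists an edge $\{x_k, y_1, \ldots, y_{r-1}\}$ of $C$ with $y_l \notin W$ for every $l$; by \Cref{pro:1} the set $\{x_{k,j}, y_{1,0}, \ldots, y_{r-1,0}\}$ is an edge of $\mathcal{J}_s(C)$ (as $j + 0 + \cdots + 0 \leqslant s$), and its only vertex in $\mathcal{J}_s(W)$ is $x_{k,j}$. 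The equivalence with the minimal prime statement then follows because $\sqrt{\mathcal{J}_s(I(C))} = I(\mathcal{J}_s(C))$ by \Cref{thm:1} and \Cref{def:2}, minimal primes are unchanged upon passing to the radical, and minimal primes of a squarefree monomial ideal correspond to minimal vertex covers of its clutter.

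For part (2), write $R = \Bbbk[\mathcal{J}_s(X)]$, $J = \mathcal{J}_s(I(C))$, and $P = \langle \mathcal{J}_s(W) \rangle$. Since part (1) shows $P$ is a minimal prime of $J$, the $P$-primary component is uniquely given by the contraction $JR_P \cap R$, and it suffices to prove $JR_P = PR_P$. The inclusion $JR_P \subseteq PR_P$ is automatic, so I would show each $x_{k,j} \in \mathcal{J}_s(W)$ lies in $JR_P$ by induction on $j$. Fix $x_k \in W$ and an edge $\{x_k, y_1, \ldots, y_{r-1}\}$ of $C$ with $y_l \notin W$ for all $l$ (as in part (1)); then $u := y_{1,0} \cdots y_{r-1,0}$ is a unit in $R_P$ since none of its factors lies in $P$. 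By \Cref{def:1} and \Cref{rem:1}, expanding $\phi_s(x_k y_1 \cdots y_{r-1})$ produces the generator
\begin{equation*}
  f_j = \sum_{i_0 + i_1 + \cdots + i_{r-1} = j} x_{k, i_0}\, y_{1, i_1} \cdots y_{r-1, i_{r-1}} \in J.
\end{equation*}
The unique term with $i_0 = j$ is $x_{k,j}\, u$, while every other term contains some $x_{k, i_0}$ with $i_0 < j$, which lies in $JR_P$ by the inductive hypothesis. Therefore $x_{k,j}\, u \in JR_P$, and dividing by the unit $u$ yields $x_{k,j} \in JR_P$; the base case $j = 0$ is immediate since $f_0 = x_{k,0}\, u$.

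The main obstacle will be the localization step in part (2): I must cleanly isolate the ``leading term'' $x_{k,j}\, u$ within $f_j$ and control the lower-order terms. The key point is that the minimality of $W$ (exploited in part (1)) lets me pick an edge of $C$ whose non-$x_k$ vertices all lie outside $W$; this is exactly what makes the complementary factor $u$ a unit in $R_P$ and powers the inductive extraction of $x_{k,j}$.
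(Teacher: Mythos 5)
Your proof is correct, and it takes a noticeably cleaner route than the paper's in both parts. For the minimality claim in part (1), the paper applies its earlier structural \Cref{lem:1}: it shows $x_s$ must lie in any sub-cover $W' \subseteq \mathcal{J}_s(W)$ and then invokes \Cref{lem:1} to pull in the remaining $x_0,\dots,x_{s-1}$. You instead show directly that each vertex $x_{k,j}$ of $\mathcal{J}_s(W)$ is indispensable, by exhibiting the edge $\{x_{k,j},y_{1,0},\dots,y_{r-1,0}\}$ whose only vertex in $\mathcal{J}_s(W)$ is $x_{k,j}$; this avoids \Cref{lem:1} entirely and is self-contained. For part (2), the paper inducts on the jet order $s$, which requires the additional (and not entirely obvious) observation that the natural embedding $R_s \hookrightarrow R_{s+1}$ carries the $\mathfrak{p}_s$-primary component of $J_s$ into the $\mathfrak{p}_{s+1}$-primary component of $J_{s+1}$. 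You instead fix $s$ and induct on the index $j$ of the generator $x_{k,j}$, isolating the ``leading term'' $x_{k,j}\,y_{1,0}\cdots y_{r-1,0}$ of the generator $f_j$ and using the inductive hypothesis to dispose of the lower-order terms. This sidesteps any comparison between different polynomial rings and is the more elementary argument, at the cost of none of the content. Both approaches hinge on the same two ideas—the choice of an edge $e$ with $e \cap W = \{x_k\}$ guaranteed by minimality of $W$, and the invertibility of $y_{1,0}\cdots y_{r-1,0}$ in the localization at $P$—so the essential mechanism is shared even though the inductions are organized differently.
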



\begin{proof}
  \begin{enumerate*}
  \item Fix $s\in \mathbb{N}$. Consider an edge
    $\{x_{1,i_1},\dots,x_{r,i_r}\}$ of $\mathcal{J}_s (C)$. By
    \Cref{pro:1}, $\{x_1,\dots,x_r\}$ is an edge of $C$ and
    $i_1+\dots+i_r \leqslant s$. Since $W$ is a vertex cover of $C$,
    we have $x_j\in W$ for some $j$. It follows that
    $x_{j,i_j} \in \mathcal{J}_s (W)$. Hence, $\mathcal{J}_s (W)$ is a
    vertex cover of $\mathcal{J}_s (C)$.
  \end{enumerate*}

    Next, we show that $\mathcal{J}_s (W)$ is minimal. Suppose that a
    subset $W'\subseteq \mathcal{J}_s (W)$ is also a vertex cover of
    $\mathcal{J}_s (C)$. Since $W$ is a minimal vertex cover of $C$,
    for every $x\in W$, there is an edge $e$ of $C$ such that
    $e\cap W = \{x\}$. Writing $e = \{x,y_1,\dots,y_r\}$, we deduce
    that $\{x_s,y_{1,0},\dots,y_{r,0}\}$ is an edge of
    $\mathcal{J}_s (C)$ by \Cref{pro:1}. Since $W'$ is a vertex cover
    of $\mathcal{J}_s (C)$ contained in $\mathcal{J}_s (W)$ and none
    of the vertices $y_{j,0}$ belong to $\mathcal{J}_s (W)$, we deduce
    that $x_s \in W'$. By \Cref{lem:1}, we have
    $x_i \in \mathcal{J}_s (W)$ for all $i\in \{0,\dots,s\}$. In other
    words, $\mathcal{J}_s (W) \subseteq W'$. This implies that
    $W'=\mathcal{J}_s (W)$ and $\mathcal{J}_s (W)$ is minimal.

    Knowing that $\mathcal{J}_s (W)$ is a minimal vertex cover of
    $\mathcal{J}_s (C)$, it immediately follows that
    $\langle \mathcal{J}_s (W) \rangle$ is a minimal prime of
    $I(\mathcal{J}_s (C))$, the edge ideal of $\mathcal{J}_s (C)$
    \cite[Lemma 6.3.37]{MR3362802}. By \Cref{def:2}, we have
    $I(\mathcal{J}_s (C)) = \sqrt{\mathcal{J}_s (I(C))}$. Therefore,
    $\langle \mathcal{J}_s (W) \rangle$ is a minimal prime of
    $\mathcal{J}_s (I(C))$ by well-known results in commutative
    algebra (see, for example, \cite[Propositions 1.14 and
    4.6]{MR0242802}).

    \begin{enumerate*}[resume]
    \item For the remainder of this proof, let
    $R_s=\Bbbk [\mathcal{J}_s (X)]$, $J_s = \mathcal{J}_s (I(C))$ and
    $\mathfrak{p}_s = \langle \mathcal{J}_s (W)\rangle$. Let
    $\psi_s \colon R_s\to R_{s,\mathfrak{p}_s}$ be the canonical map
    to the localization of $R_s$ at the prime $\mathfrak{p}_s$. Since
    $\mathfrak{p}_s$ is a minimal prime of $J_s$, the ideal
    $\mathfrak{q}_s = \psi^{-1}_s (J_s R_{s,\mathfrak{p}_s})$ is the
    $\mathfrak{p}_s$-primary component of $J_s$ (see, for example,
    \cite[Theorem 6.8 (iii)]{MR1011461}). Clearly, we have
    $\mathfrak{q}_s\subseteq \sqrt{\mathfrak{q}_s} = \mathfrak{p}_s$.
    We need to show that $\mathfrak{p}_s\subseteq \mathfrak{q}_s$.
    \end{enumerate*}

    The ideal $\mathfrak{p}_s$ is generated by the variables
    $x_0,\dots,x_s$ with $x\in W$. Fixing $x$, we show that
    $x_0,\dots,x_s \in \mathfrak{q}_s$ by induction on $s$. When
    $s=0$, we have $x_0 \in \mathfrak{p}_0 = \mathfrak{q}_0$, since
    $J_0 \cong I(C)$ is a squarefree monomial ideal (hence radical)
    and $\mathfrak{p}_0 \cong \langle W\rangle$ is one of its minimal
    primes. Next, assume that for some $s\geqslant 0$, we have
    $x_0,\dots,x_s \in \mathfrak{q}_s$; we show that
    $x_0,\dots,x_s,x_{s+1} \in \mathfrak{q}_{s+1}$. The natural
    embedding of $R_s$ into $R_{s+1}$ sends $\mathfrak{q}_s$ into
    $\mathfrak{q}_{s+1}$, so $x_0,\dots,x_s \in
    \mathfrak{q}_{s+1}$. Since $W$ is a minimal vertex cover of $C$,
    there is an edge $e$ of $C$ such that $e\cap W = \{x\}$. Write
    $e = \{x,y_1,\dots,y_r\}$ and consider the minimal generator
    $x y_1 \cdots y_r$ of $I(C)$.  As observed in \Cref{rem:1}, the
    polynomial
    \begin{equation*}
      f = \sum_{i+j_1+\dots+j_r = s+1} x_i y_{1,j_1} \cdots y_{r,j_r} 
    \end{equation*}
    is a generator of $J_{s+1}$, and therefore
    $f\in \mathfrak{q}_{s+1}$. We can rewrite $f$ as
    \begin{equation*}
      f= x_{s+1} y_{1,0} \cdots y_{r,0} +
      \sum_{i=0}^s x_i \sum_{j_1+\dots+j_r = s+1-i} y_{1,j_1} \cdots y_{r,j_r},
    \end{equation*}
    so $x_{s+1} y_{1,0} \cdots y_{r,0} \in \mathfrak{q}_{s+1}$ because
    $x_0,\dots,x_s \in \mathfrak{q}_{s+1}$. Since
    $y_1,\dots,y_r \notin W$, we have
    $y_{1,0}, \dots, y_{r,0} \notin \mathfrak{p}_{s+1}$, hence the
    elements $y_{1,0}, \dots, y_{r,0}$ are invertible in
    $R_{s+1,\mathfrak{p}_{s+1}}$. We deduce that the fraction
    \begin{equation*}
      \frac{1}{y_{1,0} \cdots y_{r,0}}
      \psi_{s+1} (x_{s+1} y_{1,0} \cdots y_{r,0}) = \frac{x_{s+1}}{1}
    \end{equation*}
    belongs to $J_{s+1} R_{s+1,\mathfrak{p}_{s+1}}$, and
    $x_{s+1}\in \mathfrak{q}_{s+1}$. This concludes the proof of the
    inductive step. Therefore, for every $s\in \mathbb{N}$,
    $\mathfrak{q}_s = \mathfrak{p}_s$. Using the notation in the
    statement of the theorem, for every $s\in\mathbb{N}$, the primary
    component of $\mathcal{J}_s (I(C))$ corresponding to the minimal
    prime $\langle \mathcal{J}_s (W)\rangle$ is exactly
    $\langle \mathcal{J}_s (W)\rangle$.
\end{proof}

Let $C$ be a clutter with vertex set $X$. For any $s\in \mathbb{N}$,
the ideal $\mathcal{J}_s (I(C))$ defines a closed subscheme
$\mathcal{V}(\mathcal{J}_s (I(C)))$ of the projective space
$\mathbb{P}^{|\mathcal{J}_s (X)|-1}$. By \Cref{thm:2} \ref{item:1}, if
$W$ is a minimal vertex cover of $C$, then
$\langle\mathcal{J}_s (W) \rangle$ is a minimal prime of
$\mathcal{J}_s (I(C))$ and it defines a closed subscheme
$\mathcal{V}(\langle\mathcal{J}_s (W) \rangle)$ that is an irreducible
component of $\mathcal{V}(\mathcal{J}_s (I(C)))$. Recall that the
\emph{multiplicity} of $\mathcal{V}(\mathcal{J}_s (I(C)))$ along
$\mathcal{V}(\langle\mathcal{J}_s (W) \rangle)$ is the length of
$\Bbbk [\mathcal{J}_s (X)] / \mathcal{J}_s (I(C))$ localized at
$\langle\mathcal{J}_s (W) \rangle$ (see, for example, \cite[\S
4.3]{MR1644323}). Then, \Cref{thm:2} \ref{item:2} immediately implies
the following result.

\begin{corollary}
  Let $C$ be a clutter, and let $W$ be a minimal vertex cover of
  $C$. For every $s\in\mathbb{N}$, the multiplicity of
  $\mathcal{V}(\mathcal{J}_s (I(C)))$ along
  $\mathcal{V}(\langle\mathcal{J}_s (W) \rangle)$ is 1.
\end{corollary}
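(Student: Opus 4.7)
The plan is to unpack the definition of multiplicity along an irreducible component and reduce immediately to \Cref{thm:2} \ref{item:2}. Set $R_s = \Bbbk [\mathcal{J}_s (X)]$, $J_s = \mathcal{J}_s (I(C))$, and $\mathfrak{p}_s = \langle \mathcal{J}_s (W)\rangle$, matching the notation used in the proof of \Cref{thm:2}. The multiplicity of $\mathcal{V}(J_s)$ along the irreducible component $\mathcal{V}(\mathfrak{p}_s)$ is by definition the length of the Artinian local ring $(R_s / J_s)_{\mathfrak{p}_s}$.

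The first step is to identify this localization using a primary decomposition of $J_s$. If $J_s = \mathfrak{q}_s \cap \mathfrak{q}'_1 \cap \cdots \cap \mathfrak{q}'_m$ is a minimal primary decomposition where $\mathfrak{q}_s$ is the $\mathfrak{p}_s$-primary component, then since $\mathfrak{p}_s$ is a minimal prime of $J_s$ (by \Cref{thm:2} \ref{item:1}), none of the other associated primes $\sqrt{\mathfrak{q}'_i}$ is contained in $\mathfrak{p}_s$. Hence each $\mathfrak{q}'_i$ becomes the unit ideal after localization at $\mathfrak{p}_s$, giving
\begin{equation*}
  (R_s / J_s)_{\mathfrak{p}_s} \;\cong\; R_{s,\mathfrak{p}_s} / \mathfrak{q}_s R_{s,\mathfrak{p}_s}.
\end{equation*}

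The second (and only substantive) step is to invoke \Cref{thm:2} \ref{item:2}, which identifies $\mathfrak{q}_s = \mathfrak{p}_s$. Substituting, the local ring becomes $R_{s,\mathfrak{p}_s} / \mathfrak{p}_s R_{s,\mathfrak{p}_s}$, which is the residue field at $\mathfrak{p}_s$. A field has length $1$ as a module over itself, which is exactly the asserted multiplicity. There is no real obstacle here: the substance has already been absorbed by \Cref{thm:2} \ref{item:2}, and the only thing to verify is the standard identification of the multiplicity with a length of a localized residue ring, for which citing \cite[\S 4.3]{MR1644323} suffices.
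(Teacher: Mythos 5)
Your proposal is correct and matches the paper's intended argument: the paper simply states that the corollary ``immediately'' follows from \Cref{thm:2}~\ref{item:2}, and you have filled in exactly the standard computation (localize, discard primary components not contained in $\mathfrak{p}_s$, use $\mathfrak{q}_s = \mathfrak{p}_s$ to get a residue field of length $1$).
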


\begin{remark}
  When the clutter $C$ contains a single edge, the scheme defined by the
  ideal $I(C)$ is called a \emph{monomial hypersurface}.  The components
  of the jets of a monomial hypersurface were described by Goward and
  Smith (see \Cref{rem:2}), and the multiplicities of the jet schemes of
  a monomial hypersurface along their components were computed by
  C.~Yuen \cite{MR2372306}.
\end{remark}

\section{Cover Ideals of Jets and Symbolic Powers}
\label{sec:cover-ideals-jets}

In this section, we describe a connection between the cover ideals of
the jets of a clutter and the symbolic powers of the cover ideal of
the clutter. To some extent, this connection allows one to translate
results about symbolic powers into results about jets and vice versa.

\begin{definition}\label{def:3}
  Let $C$ be a clutter with vertex set $X$. For $s\in \mathbb{N}$,
  define a homomorphism of $\Bbbk$-algebras
  $\delta_s \colon \Bbbk [\mathcal{J}_s (X)] \to \Bbbk [X]$ by setting
  $\delta_s (x_i) = x$ for every $x\in X$ and $i\in \{0,\dots,s\}$. We
  call $\delta_s$ a \emph{depolarization} map.
\end{definition}

\begin{remark}\label{rem:3}
  The function $\delta_s$ in the statement of Definition \ref{def:3}
  reverses the operation known as \emph{polarization} of monomials
  (see e.g. \cite{MR2184792}), hence the name depolarization. We find
  depolarization more convenient to work with because it defines a
  ring homomorphism. Consider a monomial $m\in \Bbbk [X]$, and assume
  that for every $x\in X$ there exists a natural number
  $i\leqslant s+1$ such that $m$ is divisible by $x^i$ but not by
  $x^{i+1}$. In this case, the polarization of $m$ is the unique
  monomial in $\delta_s^{-1} (\{m\})$ that, for every $x\in X$, is
  divisible by $x_j$ for all $j\in \{0,\dots,i-1\}$.
\end{remark}

\begin{theorem}\label{thm:3}
  Let $C$ be a clutter with vertex set $X$. For every
  $s\in \mathbb{N}$, we have
  $\delta_s (I_c (\mathcal{J}_s (C))) = I_c (C)^{(s+1)}$, where
  $\delta_s \colon \Bbbk [\mathcal{J}_s (X)] \to \Bbbk [X]$ is the
  depolarization map. Moreover, $\delta_s$ induces a
  bijection between the set of minimal generators of
  $I_c (\mathcal{J}_s (C)))$ and the set of minimal generators of
  $I_c (C)^{(s+1)}$.
\end{theorem}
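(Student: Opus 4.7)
The plan is to characterize the minimal generators of $I_c(\mathcal{J}_s(C))$ explicitly using \Cref{lem:1}, then match them bijectively with the minimal generators of $I_c(C)^{(s+1)}$ via $\delta_s$, and finally deduce the ideal equality from the fact that $\delta_s$ is surjective.

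First I would analyze the minimal vertex covers of $\mathcal{J}_s(C)$. By \Cref{lem:1}, any such minimal cover $W$ is ``downward closed'' in each variable, so it is determined by a function $d\colon X \to \{0,1,\dots,s+1\}$ via $W = \{x_j : x\in X,\ 0 \leqslant j < d_x\}$. Using \Cref{pro:1}, the condition that $W$ covers every edge $\{x_{1,i_1},\dots,x_{r,i_r}\}$ with $i_1+\dots+i_r\leqslant s$ translates, by a short contrapositive argument, into the numerical condition
\begin{equation*}
  d_{x_1} + \dots + d_{x_r} \geqslant s+1 \quad \text{for every edge } \{x_1,\dots,x_r\} \in E(C).
\end{equation*}
Comparing with \Cref{eq:1}, this is exactly the condition for the monomial $m = \prod_{x\in X} x^{d_x}$ to lie in $I_c(C)^{(s+1)}$. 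Moreover, the squarefree monomial $m_W = \prod_{x\in X} \prod_{j=0}^{d_x-1} x_j$ corresponding to $W$ satisfies $\delta_s(m_W) = m$; in the language of \Cref{rem:3}, $m_W$ is precisely the polarization of $m$.

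The main obstacle, and the step I would carry out most carefully, is checking that this $W \mapsto m$ correspondence preserves \emph{minimality} in both directions. Concretely, $W$ is minimal as a vertex cover of $\mathcal{J}_s(C)$ iff no ``top'' vertex $x_{d_x - 1}$ (for $x$ with $d_x \geqslant 1$) can be removed, which happens iff for each such $x$ there is an edge $\{x,y_1,\dots,y_r\}\in E(C)$ saturating the inequality $d_x + d_{y_1}+\dots+d_{y_r} = s+1$. On the other hand, $m$ is a minimal generator of $I_c(C)^{(s+1)}$ iff for each $x$ with $d_x \geqslant 1$ the monomial $m/x$ fails the covering inequality at some edge through $x$, which is the same saturation condition. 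Hence $W \mapsto \delta_s(m_W)$ is a well-defined bijection between minimal generators of $I_c(\mathcal{J}_s(C))$ and minimal generators of $I_c(C)^{(s+1)}$.

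Finally, to upgrade this bijection to the equality $\delta_s(I_c(\mathcal{J}_s(C))) = I_c(C)^{(s+1)}$, I would use that $\delta_s$ is a surjective ring homomorphism, so the image $\delta_s(I_c(\mathcal{J}_s(C)))$ is an ideal, generated by the images of any generating set of $I_c(\mathcal{J}_s(C))$. Taking the minimal monomial generating set and applying the bijection from the previous step, these images are precisely the minimal generators of $I_c(C)^{(s+1)}$, giving both inclusions at once.
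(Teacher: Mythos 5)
Your proposal is correct, and at its core it uses the same ingredients as the paper's proof — \Cref{lem:1}, \Cref{pro:1}, the symbolic-power characterization in \Cref{eq:1}, and (de)polarization via \Cref{rem:3}. The organization, however, is genuinely different and arguably cleaner. The paper proves the two inclusions $\delta_s(I_c(\mathcal{J}_s(C))) \subseteq I_c(C)^{(s+1)}$ and $\supseteq$ separately, each by a contradiction argument, and then tacks on the minimality/bijection claim at the end (the second inclusion coming from the fact that every minimal generator of $I_c(C)^{(s+1)}$ is hit). You instead set up a single explicit correspondence: downward-closed vertex covers of $\mathcal{J}_s(C)$ are parametrized by functions $d\colon X\to\{0,\dots,s+1\}$, the cover condition for the jet clutter is equivalent to the covering inequality $\sum_{x\in e} d_x \geqslant s+1$ for every edge $e$ of $C$, and minimality on both sides is equivalent to the same saturation condition (equality in the covering inequality at some edge through $x$, for each $x$ with $d_x\geqslant 1$). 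This symmetric formulation makes the bijection on minimal generators transparent and yields both inclusions of the ideal equality at once, rather than by two ad hoc arguments. What your approach buys is a more structural picture of exactly which data on the two sides match; what the paper's buys is that each inclusion is self-contained and the argument never needs to treat the whole set of minimal vertex covers at once.

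One point in your sketch that deserves to be spelled out is the equivalence ``$W$ minimal $\iff$ no top vertex $x_{d_x-1}$ can be removed.'' The forward direction is immediate, but the converse needs a small argument: if some $x_j$ with $j<d_x-1$ could be removed while remaining a cover, the resulting set would contain a \emph{minimal} cover $W''$, which by \Cref{lem:1} is again downward-closed, so $W''$ misses $x_{d_x-1}$ as well, and hence $W\setminus\{x_{d_x-1}\}\supseteq W''$ is also a cover. Without this observation, checking only the top vertices does not obviously establish minimality. Similarly, when you polarize a minimal generator $m'$ of $I_c(C)^{(s+1)}$ you implicitly use that its exponents are bounded by $s+1$; this follows from the saturation condition you identify, but is worth noting since \Cref{rem:3} only defines polarization under that hypothesis (the paper also leaves this implicit). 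Neither of these is a gap in the approach, just places where the ``short'' arguments should actually appear in a final write-up.
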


\begin{proof}
  We start by showing that
  $\delta_s (I_c (\mathcal{J}_s (C))) \subseteq I_c (C)^{(s+1)}$. To
  prove this inclusion, it is enough to show that
  $\delta_s (m) \in I_c (C)^{(s+1)}$ for every minimal monomial
  generator $m$ of $I_c (\mathcal{J}_s (C))$, because
  $I_c (\mathcal{J}_s (C))$ is a monomial ideal.  Fix a minimal
  monomial generator $m$ of $I_c (\mathcal{J}_s (C))$, and let $W$ be
  the corresponding minimal vertex cover of $\mathcal{J}_s (C)$.  To
  show that $\delta_s (m) \in I_c (C)^{(s+1)}$, we show that
  $\delta_s (m) \in \langle e\rangle^{s+1}$ for every edge $e$ of
  $C$. Fix an edge $\{x_1,\dots,x_r\}$ of $C$, and define, for every
  $i\in \{1,\dots,r\}$, the quantities
  \begin{equation*}
    u_i =
    \begin{cases}
      \max \{ j \in \mathbb{N} \colon x_{i,j} \in W \},
      & \text{if }x_{i,j}\in W \text{ for some } j\in\mathbb{N},\\
      -1,& \text{otherwise}.
    \end{cases}    
  \end{equation*}
  By Lemma \ref{lem:1}, the monomial
  \begin{equation*}
    \prod_{i=1}^r \prod_{j=0}^{u_i} x_{i,j} \in \Bbbk [\mathcal{J}_s (X)]
  \end{equation*}
  divides $m$, and hence the monomial
  \begin{equation}\label{eq:2}
    \prod_{i=1}^r x_i^{u_i+1} \in \Bbbk [X],
  \end{equation}
  obtained by applying $\delta_s$, divides $\delta_s (m)$. Suppose
  $\delta_s (m) \notin \langle x_1,\dots,x_r\rangle^{s+1}$; then the
  monomial in \eqref{eq:2} does not belong to
  $\langle x_1,\dots,x_r\rangle^{s+1}$. It follows that
  \begin{equation*}
    \sum_{i=1}^r (u_i+1) \leqslant s,
  \end{equation*}
  and, therefore, $\{x_{1,u_1+1},\dots,x_{r,u_r+1}\}$ is an edge of
  $\mathcal{J}_s (C)$ that is not covered by $W$. Since this is a
  contradiction, we conclude that
  $\delta_s (m) \in \langle x_1,\dots,x_r\rangle^{s+1}$, as desired.

  Next, we show that for every minimal monomial generator $m'$ of
  $I_c (C)^{(s+1)}$ there exists a minimal monomial generator $m$ of
  $I_c (\mathcal{J}_s (C))$ such that $\delta_s (m) = m'$. Given $m'$,
  we let $m$ be the polarization of $m'$ (see Remark \ref{rem:3}) so
  that $\delta_s (m) = m'$ holds true. To show
  $m\in I_c (\mathcal{J}_s (C))$, we show that $m\in\langle e\rangle$
  for every edge $e$ of $\mathcal{J}_s (C)$. Fix an edge
  $\{x_{1,i_1},\dots,x_{r,i_r}\}$ of $\mathcal{J}_s (C)$, where
  $\{x_1,\dots,x_r\}$ is an edge of $C$ and
  $i_1+\dots+i_r\leqslant s$.  For every $i\in\{1,\dots,r\}$, define
  the quantities
  \begin{equation*}
    v_i = \max \{ j\in\mathbb{N} \colon x_i^j\text{ divides } m'\}.
  \end{equation*}
  Since $x_1^{v_1}\cdots x_r^{v_r}$ divides $m'$ and
  $m' \in \langle x_1,\dots,x_r\rangle^{s+1}$, we get
  $v_1+\dots+v_r\geqslant s+1$.  If $x_{j,i_j} \nmid m$ for some
  $j\in\{1,\dots,r\}$, then $x_j^{i_j+1} \nmid m'$ by Remark
  \ref{rem:3}; hence $i_j+1>v_j$ or, equivalently, $i_j\geqslant v_j$.
  Thus, if none of the variables $x_{1,i_1},\dots,x_{r,i_r}$ divide
  $m$, then we have the inequalities
  \begin{equation*}
    \sum_{j=1}^r i_j \geqslant \sum_{j=1}^r v_j \geqslant s+1,
  \end{equation*}
  which is a contradiction. This implies that
  $m\in \langle x_{1,i_1},\dots,x_{r,i_r} \rangle$, and therefore
  $m\in I_c (\mathcal{J}_s (C))$, which concludes the proof of the
  equality $\delta_s (I_c (\mathcal{J}_s (C))) = I_c (C)^{(s+1)}$.
  
  Now, if $m$ is not a minimal generator of $I_c (\mathcal{J}_s (C))$,
  then there is a monomial $\tilde{m} \in I_c (\mathcal{J}_s (C))$ not
  equal to $m$ that divides $m$. It follows that
  $\delta_s (\tilde{m}) \in I_c (C)^{(s+1)}$, $\delta_s (\tilde{m})$
  divides $\delta_s(m)=m'$, and $\delta_s (\tilde{m}) \neq m'$, contradicting the
  fact that $m'$ is a minimal generator of $I_c (C)^{(s+1)}$. Thus,
  $m$ is a minimal generator of $I_c (\mathcal{J}_s (C))$, and the
  restriction of $\delta_s$ between the set of minimal generators of
  $I_c (\mathcal{J}_s (C)))$ and the set of minimal generators of
  $I_c (C)^{(s+1)}$ is surjective. The injectivity follows from Lemma
  \ref{lem:1}.
\end{proof}

For the combinatorially-minded reader, we offer the following
restatement of the bijection in Theorem \ref{thm:3}.

\begin{corollary}\label{cor:1}
  Let $C$ be a clutter. For every $s\in \mathbb{N}$, the minimal
  vertex covers of $\mathcal{J}_s (C)$ are in bijection with the
  minimal $(s+1)$-covers of $C$ via (de)polarization.
\end{corollary}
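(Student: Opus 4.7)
The proof is essentially a translation of \Cref{thm:3} into the language of vertex covers and $k$-covers, so the plan is to assemble a chain of bijections rather than do new combinatorial work. First I would record two standard correspondences that are built into the definitions: (a) the minimal vertex covers of $\mathcal{J}_s(C)$ are in one-to-one correspondence with the squarefree minimal monomial generators of $I_c(\mathcal{J}_s(C))$, via the assignment $W \mapsto \prod_{x_{i,j}\in W} x_{i,j}$, because $I_c(\mathcal{J}_s(C))$ is by definition the edge ideal of the Alexander dual $\mathcal{J}_s(C)^\vee$; (b) the minimal $(s+1)$-covers of $C$ are in one-to-one correspondence with the minimal monomial generators of $I_c(C)^{(s+1)}$, since the paper defines a minimal $(s+1)$-cover to be precisely the exponent vector of such a generator.

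Next I would invoke \Cref{thm:3}, which supplies the third link in the chain: the depolarization map $\delta_s$ restricts to a bijection between the minimal generators of $I_c(\mathcal{J}_s(C))$ and those of $I_c(C)^{(s+1)}$. Composing the three bijections yields a bijection between minimal vertex covers of $\mathcal{J}_s(C)$ and minimal $(s+1)$-covers of $C$.

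The only thing left to verify is that the composite is given by (de)polarization in the combinatorial sense of \Cref{rem:3}. Given a minimal vertex cover $W$ of $\mathcal{J}_s(C)$ with corresponding squarefree generator $m = \prod_{x_{i,j}\in W} x_{i,j}$, the depolarized monomial $\delta_s(m) = \prod_{x\in X} x^{e_x}$ has exponent $e_x$ equal to the number of levels $j$ with $x_j\in W$. By \Cref{lem:1}, this number is exactly $u_x+1$, where $u_x = \max\{j : x_j\in W\}$ (or $0$ if no such $j$ exists), so $W$ has the shape of a polarization and $m$ is the polarization of $\delta_s(m)$ in the sense of \Cref{rem:3}. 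Thus the bijection between minimal vertex covers of $\mathcal{J}_s(C)$ and minimal $(s+1)$-covers of $C$ is genuinely given by (de)polarization.

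No step here requires serious work; the statement is a combinatorial dictionary on top of \Cref{thm:3}, with \Cref{lem:1} supplying the reason that depolarization of a minimal vertex cover is always well-defined (i.e., that the support at each $x\in X$ is an initial segment $\{x_0,\dots,x_{u_x}\}$). The mild subtlety to be careful about is keeping straight that ``minimal $(s+1)$-cover'' means minimal as an exponent vector of a generator of $I_c(C)^{(s+1)}$, not minimal in some other lattice-theoretic sense, so that the bijection in (b) is a tautology.
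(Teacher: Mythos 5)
Your proposal is correct and matches the paper's approach: the paper presents this corollary as a direct restatement of \Cref{thm:3}, and you supply exactly the unpacking --- minimal vertex covers correspond to minimal generators of $I_c(\mathcal{J}_s(C))$, minimal $(s+1)$-covers to minimal generators of $I_c(C)^{(s+1)}$, \Cref{thm:3} bridges these via $\delta_s$, and \Cref{lem:1} ensures the squarefree generator attached to a minimal vertex cover really is the polarization of its depolarized image. One small slip: when no $j$ with $x_j\in W$ exists you set $u_x=0$, which would give $u_x+1=1\neq e_x=0$; the convention used in the paper's proof of \Cref{thm:3} is $u_x=-1$, but this is a typo and does not affect the argument.
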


\begin{example}
  Consider the graph $G$ of \Cref{exa:2}. The ideal of covers of $G$ is
  \begin{equation*}
    I_c (G) = \langle x,y\rangle \cap \langle y,z\rangle
    = \langle xz,y\rangle,
  \end{equation*}
  and its symbolic square is
  \begin{equation*}
    \begin{split}
      I_c (G)^{(2)} = \langle x,y\rangle^2 \cap \langle y,z\rangle^2
    = \langle x^2 z^2, xyz,y^2\rangle,
    \end{split}
  \end{equation*}
  where the given generators are minimal. The polarization of
  $I_c (G)^{(2)}$ is the ideal
  \begin{equation*}
    \langle x_0 x_1 z_0 z_1, x_0 y_0 z_0, y_0 y_1\rangle,
  \end{equation*}
  and its generators correspond to the minimal vertex covers of
  $\mathcal{J}_1 (G)$.
\end{example}

\begin{remark}
  If a generating set for the symbolic Rees algebra $R_s (I_c (C))$ is
  known, one can use it in combination with \Cref{thm:3} to find the
  vertex covers of the clutter of $s$-jets $\mathcal{J}_s (C)$ for any
  given $s\in\mathbb{N}$. We illustrate this idea for graphs in \S
  \ref{sec:very-well-covered}.

  The literature contains results that explicitly describe generating
  sets for symbolic Rees algebras of squarefree monomial ideals. For
  example, \cite[Proposition 4.6]{MR2298826} applies to the edge ideal
  of the skeleton of a simplex, while \cite[Theorem 3.4]{MR4124655}
  applies to the edge ideal of a unicyclic graph. These results are
  for the symbolic Rees algebra $R_s (I(C))$ of the edge ideal of a
  clutter $C$, rather than its ideal of covers $I_c (C)$. Recalling
  that $I_c (C) = I(C^\vee)$, one may then use known generating sets
  for $R_s (I(C))$ in combination with \Cref{thm:3} to find the vertex
  covers of the jets of the Alexander dual of $C$.
\end{remark}

\section{Jets of Very Well-Covered Graphs}
\label{sec:very-well-covered}

Let $G$ be a graph with vertex set $X$ and assume $G$ has no isolated
vertex. We say $G$ is \emph{well-covered} (or \emph{unmixed}) if all
its minimal vertex covers have the same cardinality \cite[Definition
7.2.8]{MR3362802}. Furthermore, if this cardinality equals
$\frac{1}{2} |X|$, then $G$ is called \emph{very well-covered}. There
has been interest from the point of view of graph and complexity
theory in recognition of (very) well-covered graphs \cite{MR677051,
  MR1217991, MR3106446}. Such graphs have also been well-studied in
the context of combinatorial commutative algebra
\cite{MR2772171,MR2793950,MR4418794}.  Our goal is to show that the
jets of very well-covered graphs are very well-covered, which was
conjectured in \cite[Conjecture 6.5]{MR4384023}.

We begin with a result characterizing irreducible $k$-covers for graphs; equivalently, it describes generators of the symbolic Rees algebra $R_s(I_c(G))$. Let $G$ be a graph with vertex set $X$.  Given a set of vertices
$Y\subseteq X$, the \emph{neighbor set} of $Y$, denoted $N(Y)$, is the
set of vertices of $G$ that are adjacent to at least one vertex in
$Y$. Also, the \emph{induced subgraph} $G[Y]$ is the graph with vertex
set $Y$ whose edge set consists of all edges of $G$ with both
endpoints in $Y$.  The following result is a restatement of \cite[Theorem
13.4.5]{MR3362802}.

\begin{theorem}\label{thm:4}
  Let $G$ be a simple graph with vertex set $X$. The irreducible
  $k$-covers of $G$ are the exponent vectors of the monomials
  $m\in I_c (G)^{(k)}$ with the following descriptions:
  \begin{enumerate}[label=(\arabic*)]
  \item $k=0$ and $m=x$ for some $x\in X$;
  \item\label{item:3} $k=1$ and $m=\prod_{w\in W} w$ for some minimal vertex cover
    $W$ of $G$;
  \end{enumerate}
  and, if $G$ is not bipartite,
  \begin{enumerate}[resume,label=(\arabic*)]
  \item\label{item:4} $k=2$ and $m=\prod_{x\in X} x$;
  \item\label{item:5} $k=2$ and
    \begin{equation*}
      m=\prod_{v\in N(U)} v^2 \prod_{z\in X\setminus (U\cup N(U))} z
    \end{equation*}
  \end{enumerate}
  for some nonempty independent set $U$ of $G$ such that
  \begin{itemize}
  \item $N(U)$ is not a vertex cover of $G$,
  \item $X\neq U\cup N(U)$, and
  \item the induced subgraph $G[X\setminus (U\cup N(U))]$ has no
    isolated vertex and is not bipartite.
  \end{itemize}
\end{theorem}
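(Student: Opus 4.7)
The plan is to recast the problem polyhedrally. A monomial $m = \prod_{x \in X} x^{a_x}$ belongs to $I_c(G)^{(k)}$ precisely when $a_x + a_y \geqslant k$ for every edge $\{x,y\} \in E(G)$, and by finite generation of the symbolic Rees algebra \cite{MR2298826}, the irreducible $k$-covers form a finite set to be determined. The cases $k = 0$ and $k = 1$ are immediate: the variables are the required degree-zero generators of $R_s(I_c(G))$, and minimal $1$-covers are exactly the indicator monomials of minimal vertex covers by the primary decomposition in \eqref{eq:1}.

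For $k \geqslant 2$ the argument splits on whether $G$ is bipartite. If $G$ is bipartite, then $I_c(G)$ is normally torsion-free, meaning $I_c(G)^{(k)} = I_c(G)^k$ for all $k$; this classical fact (provable via König's theorem applied to the primary decomposition) forces every $k$-cover with $k \geqslant 2$ to decompose as a sum of $k$ one-covers, so no new irreducible covers arise. In the non-bipartite case, items \ref{item:4} and \ref{item:5} can be verified directly. For \ref{item:4}, the all-ones exponent vector is a $2$-cover, and it is reducible if and only if $X$ splits into two vertex covers, if and only if both pieces are independent sets, if and only if $G$ is bipartite. For \ref{item:5}, one checks using the independence of $U$ that the prescribed exponent vector covers every edge at least twice, and then argues that any decomposition into two $1$-covers would contradict one of the three structural hypotheses on $U$: either $N(U)$ would become a vertex cover, or $X = U \cup N(U)$, or $G[X \setminus (U \cup N(U))]$ would split into two independent sets and hence be bipartite.

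The main obstacle is \emph{completeness} in the non-bipartite case, i.e., showing that no irreducible $k$-covers exist outside those listed. I would approach this by taking a putative irreducible $k$-cover $m$ with $k \geqslant 2$ not of the forms \ref{item:4} or \ref{item:5}, and constructing an explicit decomposition $m = m_1 m_2$ with $m_i \in I_c(G)^{(k_i)}$ and $k_1, k_2 \geqslant 1$. The natural candidate for $m_1$ is the indicator monomial of a minimal vertex cover drawn from the support of $m$, chosen so that $m/m_1$ remains a $(k-1)$-cover; the failure of the conditions defining \ref{item:5} is precisely what should permit such a choice. Carrying this out rigorously, by induction on $|X|$ together with a careful analysis of how odd cycles in $G$ constrain the supports of irreducible covers, is where the real work lies and likely mirrors the original argument in \cite{MR3362802}.
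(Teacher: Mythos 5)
The paper does not actually prove \Cref{thm:4}; it states the result as ``a restatement of \cite[Theorem 13.4.5]{MR3362802}'' and leaves the proof to Villarreal's book. So there is no in-paper argument to compare yours against; the comparison point is whether your outline is itself a complete proof.

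It is not, and you essentially say so. The easy ingredients you supply are fine: the $k=0,1$ cases, the normal-torsion-freeness of $I_c(G)$ for bipartite $G$ (which does follow from total unimodularity of the incidence matrix and does kill all irreducible covers of degree $\geq 2$ in the bipartite case), and the irreducibility of the all-ones vector via the observation that a squarefree factorization into two $1$-covers forces $X$ to partition into two independent sets. But two pieces remain unproved. First, the irreducibility of the exponent vector in item \ref{item:5} is only sketched: you would need to show that any putative factorization $m = m_1 m_2$ with both factors $1$-covers violates one of the three listed structural hypotheses on $U$, and this requires a concrete argument about how the exponents $2$ on $N(U)$, $1$ on $X \setminus (U \cup N(U))$, and $0$ on $U$ can be split between two vertex covers. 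Second, and more seriously, the completeness direction — that every irreducible $k$-cover of a non-bipartite graph with $k \geq 2$ has $k = 2$ and is of type \ref{item:4} or \ref{item:5} — is the entire content of the theorem, and you explicitly defer it (``where the real work lies and likely mirrors the original argument in \cite{MR3362802}''). A plan to produce a splitting $m = m_1 m_2$ from a minimal vertex cover inside $\operatorname{supp}(m)$, with an unspecified induction on $|X|$, is not a proof; in particular it is not at all clear that a minimal vertex cover $W \subseteq \operatorname{supp}(m)$ with $m/\prod_{w \in W} w \in I_c(G)^{(k-1)}$ must exist, and identifying exactly when it does is precisely the obstruction that leads to the independent-set condition in \ref{item:5}. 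Until that is carried out, the proof has a genuine gap.
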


\begin{example}\label{exa:3}
  Let $G$ be the graph in \Cref{fig:2}. The vertex covers of $G$ are
  $\{u,w,x\}$, $\{v,w,x\}$, $\{u,w,y\}$, $\{v,w,y\}$, $\{v,x,y\}$.
  Since $G$ is not bipartite, it has the irreducible 2-cover of type
  \ref{item:4} corresponding to the monomial $uvwxy$. It also has the
  irreducible 2-cover of type \ref{item:5} with independent set
  $U=\{u\}$ and corresponding to the monomial $v^2wxy$. Polarizing
  these two monomials gives minimal vertex covers of
  $\mathcal{J}_1 (G)$. Then, for every $s\in\mathbb{N}$, the minimal
  vertex covers of $\mathcal{J}_s (G)$ are obtained by ``stacking''
  these two vertex covers of $\mathcal{J}_1 (G)$ and the minimal
  vertex covers of $G$.
  
  \Cref{fig:3} shows a minimal vertex cover of $\mathcal{J}_2 (G)$
  obtained by stacking the polarization of $v^2wxy$ (in black) and
  $uwy$ (in gray).
    \begin{figure}[htb]
    \centering
    \begin{tikzpicture}[xscale=1.75,yscale=0.75]
      \tikzset{vertex/.style = {shape=circle,draw,thick,inner sep=2pt}}
      \tikzset{edge/.style = {thick}}
      \tikzset{every label/.append style={font=\small}}

      \node[vertex] (v) at (0,0) [label=below:{$u$}] {};
      \node[vertex] (w) at (1,0) [label=below:{$v$}] {};
      \node[vertex] (x) at (2,0) [label=below:{$w$}] {};
      \node[vertex] (y) at (3,1) [label=above:{$x$}] {};
      \node[vertex] (z) at (4,0) [label=below:{$y$}] {};

      \draw[edge] (v) -- (w) -- (x) -- (y) -- (z) -- (x);
    \end{tikzpicture}
    \caption{A non-bipartite graph $G$.}
    \label{fig:2}
  \end{figure}
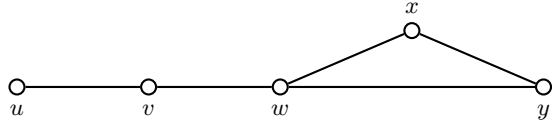
    \begin{figure}[htb]
    \centering
    \begin{tikzpicture}[xscale=1.75,yscale=0.75]
      \tikzset{vertex/.style = {shape=circle,draw,thick,inner sep=2pt}}
      \tikzset{edge/.style = {thick}}
      \tikzset{every label/.append style={font=\small}}

      \node[vertex,fill=lightgray] (v0) at (0,0) [label=below:{$u_0$}] {};
      \node[vertex,fill] (w0) at (1,0) [label=below:{$v_0$}] {};
      \node[vertex,fill] (x0) at (2,0) [label=below:{$w_0$}] {};
      \node[vertex,fill] (y0) at (3,1) [label=below:{$x_0$}] {};
      \node[vertex,fill] (z0) at (4,0) [label=below:{$y_0$}] {};
      \node[vertex] (v1) at (0,3) [label=above:{$u_1$}] {};
      \node[vertex,fill] (w1) at (1,3) [label=above:{$v_1$}] {};
      \node[vertex,fill=lightgray] (x1) at (2,3) [label=above:{$w_1$}] {};
      \node[vertex] (y1) at (3,4) [label=above:{$x_1$}] {};
      \node[vertex,fill=lightgray] (z1) at (4,3) [label=above:{$y_1$}] {};
      \node[vertex] (v2) at (0,6) [label=above:{$u_2$}] {};
      \node[vertex] (w2) at (1,6) [label=above:{$v_2$}] {};
      \node[vertex] (x2) at (2,6) [label=above:{$w_2$}] {};
      \node[vertex] (y2) at (3,7) [label=above:{$x_2$}] {};
      \node[vertex] (z2) at (4,6) [label=above:{$y_2$}] {};

      \draw[edge] (v0) -- (w0) -- (x0) -- (y0) -- (z0) -- (x0);
      \draw[edge] (v1) -- (w1) -- (x1) -- (y1) -- (z1) -- (x1);
      \draw[edge] (v0) -- (w1);
      \draw[edge] (v1) -- (w0);
      \draw[edge] (v0) -- (w2);
      \draw[edge] (v2) -- (w0);
      \draw[edge] (x0) -- (w1);
      \draw[edge] (x1) -- (w0);
      \draw[edge] (x0) -- (w2);
      \draw[edge] (x2) -- (w0);
      \draw[edge] (x0) -- (z1);
      \draw[edge] (x1) -- (z0);
      \draw[edge] (x0) -- (z2);
      \draw[edge] (x2) -- (z0);
      \draw[edge] (x0) -- (y1);
      \draw[edge] (x1) -- (y0);
      \draw[edge] (x0) -- (y2);
      \draw[edge] (x2) -- (y0);
      \draw[edge] (z0) -- (y1);
      \draw[edge] (z1) -- (y0);
      \draw[edge] (z0) -- (y2);
      \draw[edge] (z2) -- (y0);
    \end{tikzpicture}
    \caption{A reducible vertex cover of $\mathcal{J}_2 (G)$.}
    \label{fig:3}
  \end{figure}
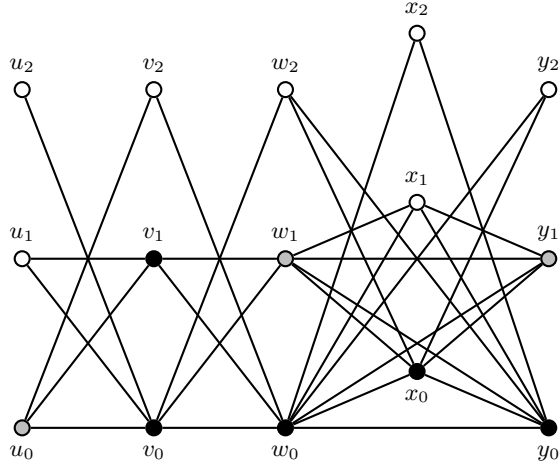
\end{example}

Next, we present some equivalent characterizations of very well-covered graph. Recall that a \emph{perfect matching} of
$G$ is a set of pairwise disjoint edges of $G$ whose union is $X$. Given a perfect matching $M$ and a vertex
$x$ of $G$, we denote $M(x)$ the unique vertex of $G$ adjacent to $x$
such that $\{x,M(x)\} \in M$. Favaron declares that $M$ satisfies the \emph{property (P)} if, for every $x\in X$, every neighbor of $x$ other
than $M(x)$ is not adjacent to $M(x)$, but is adjacent to all neighbors
of $M(x)$. The following result of O.~Favaron \cite[Theorem
1.2]{MR677051} characterizes very well-covered graphs in terms of the
property (P).
\begin{theorem}\label{thm:5}
  The following are equivalent.
  \begin{enumerate}
  \item $G$ is very well-covered.
  \item $G$ has a perfect matching that satisfies the property (P).
  \item $G$ has a perfect matching, and every perfect matching of $G$
    satisfies the property (P).
  \end{enumerate}
\end{theorem}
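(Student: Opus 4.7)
My plan is to establish $(2) \Rightarrow (1) \Rightarrow (3)$, since $(3) \Rightarrow (2)$ is immediate.

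For $(2) \Rightarrow (1)$, fix a perfect matching $M$ satisfying property (P) and let $W$ be any minimal vertex cover. The bound $|W| \geq |X|/2$ is immediate from the $|X|/2$ pairwise disjoint edges of $M$. For the reverse inequality, I would show that no minimal vertex cover can contain both endpoints of a matching edge. Suppose for contradiction that $x, M(x) \in W$; I claim $W \setminus \{x\}$ still covers every edge, contradicting minimality. The only edges requiring attention are those of the form $\{x,z\}$ with $z \neq M(x)$; if $z \notin W$, then minimality of $W$ applied to $M(x)$ yields some neighbor $y \neq x$ of $M(x)$ with $y \notin W$, and property (P) applied to $x$ then forces $y$ to be adjacent to $z$, producing an uncovered edge $\{y,z\}$.

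For $(1) \Rightarrow (3)$, I split into existence of a perfect matching and verification of (P) for an arbitrary perfect matching. Gallai's identity combined with very well-coveredness yields $\alpha(G) = \tau(G) = |X|/2$. To extract a perfect matching, I would fix a maximum independent set $I$ of size $|X|/2$ and verify Hall's condition on the bipartite graph from $I$ to $X \setminus I$, using the fact that any violating subset $S \subseteq I$ would permit replacing a portion of $X \setminus I$ by a strictly smaller vertex cover, contradicting $\tau(G) = |X|/2$. Given a perfect matching $M$ and a neighbor $y \neq M(x)$ of a vertex $x$, I would then establish both halves of (P) by contradiction: if $y$ were adjacent to $M(x)$, or if $y$ failed to be adjacent to some neighbor $z$ of $M(x)$, I would assemble a vertex cover of size strictly less than $|X|/2$ using $y$ in place of $x$ (or a similar local swap) together with a carefully chosen transversal of the remaining matching edges, violating the equi-cardinality of minimal vertex covers.

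The hardest part will be the proof of property (P) in $(1) \Rightarrow (3)$: while each of its two clauses is a local statement about vertices at distance at most two, the universal adjacency requirement means one must simultaneously control several candidate covers assembled from alternative matching-based choices, and verify that each is actually a minimal cover of the correct size. The Hall-type extraction of the perfect matching is a secondary difficulty, since the bipartition $(I, X\setminus I)$ is not a graph-theoretic bipartition of $G$ and edges inside $X \setminus I$ must be accommodated when checking that the candidate cover constructed from a Hall violator is strictly smaller than $|X|/2$.
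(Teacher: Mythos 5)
The paper does not prove this theorem: it is stated as a citation to Favaron's paper \cite[Theorem 1.2]{MR677051}, so there is no in-paper argument to compare against. Treating your proposal on its own merits: the implication $(3)\Rightarrow(2)$ is indeed trivial, and your argument for $(2)\Rightarrow(1)$ is correct. More precisely, if a minimal cover $W$ contained both $x$ and $M(x)$, then minimality at $x$ gives a neighbor $z\neq M(x)$ with $z\notin W$, minimality at $M(x)$ gives a neighbor $y\neq x$ with $y\notin W$; property (P) applied at $x$ forces $z$ adjacent to $y$ (and $y\neq z$, since $y\sim M(x)$ while $z\not\sim M(x)$), so $\{y,z\}$ is an uncovered edge---a contradiction. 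Thus $W$ meets each matching edge exactly once, giving $|W|=|X|/2$, and the perfect matching also rules out isolated vertices. This is clean.

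The implication $(1)\Rightarrow(3)$, however, is only a plan and contains genuine gaps, both of which you flag but neither of which you close. First, the Hall-type extraction of a perfect matching: if $I$ is a maximum independent set and $S\subseteq I$ is a Hall violator, the candidate cover $(I\setminus S)\cup N(S)$ is smaller than $|X|/2$, but it need not be a vertex cover of $G$ because edges contained in $(X\setminus I)\setminus N(S)$ are left uncovered. To make this work you need the nontrivial intermediate fact that for a very well-covered graph $|N(S)|\geq |S|$ holds for every independent set $S$ (equivalently, that well-covered graphs without isolated vertices satisfy $\alpha(G)\leq |V(G)|/2$), and that fact itself requires a separate well-coveredness argument applied to $G-N[I\setminus S]$; pure cardinality counting with $\alpha(G)=\tau(G)=|X|/2$ is not enough. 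Second, the verification of property (P) is entirely schematic (``a carefully chosen transversal of the remaining matching edges''): you do not actually construct the small cover, nor verify that it covers the edges lying inside the complement of the independent set you choose. These two steps are precisely the content of Favaron's theorem, and without them the forward implication is not established. If you intend to prove the theorem rather than cite it, you should either carry out the swap construction for (P) explicitly or reduce to Favaron's Lemma~1.1/Theorem~1.1 structure, and you should prove the Hall inequality $|N(S)|\geq |S|$ directly from well-coveredness rather than from the equality $\alpha=\tau=|X|/2$ alone.
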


We finally come to the main result of this section.

\begin{theorem}\label{thm:6}
  Let $G$ be a very well-covered graph. Then, for every
  $s\in \mathbb{N}$, the graph of $s$-jets of $G$ is also very
  well-covered.
\end{theorem}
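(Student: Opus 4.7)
The plan is to invoke Favaron's characterization \Cref{thm:5} in the direction (2) $\Rightarrow$ (1): to show $\mathcal{J}_s(G)$ is very well-covered, it suffices to exhibit a perfect matching $M_s$ of $\mathcal{J}_s(G)$ that satisfies property (P). Since $G$ is very well-covered, \Cref{thm:5} supplies a perfect matching $M$ of $G$ satisfying property (P), which will be the starting data.

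The candidate matching is the ``index-reversing'' pairing defined by $M_s(x_i) := M(x)_{s-i}$ for every $x \in X$ and $i \in \{0,\ldots,s\}$. The assignment $x_i \mapsto M(x)_{s-i}$ is an involution on $\mathcal{J}_s(X)$ because $M(M(x)) = x$ and $s - (s-i) = i$, and each pair $\{x_i, M(x)_{s-i}\}$ is an edge of $\mathcal{J}_s(G)$ by \Cref{pro:1}, since $\{x, M(x)\} \in E(G)$ and $i + (s-i) = s$. Hence $M_s$ is a perfect matching of $\mathcal{J}_s(G)$; in particular $\mathcal{J}_s(G)$ has no isolated vertex, which is required by the definition of (very) well-covered.

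The substantive step is the verification of property (P) for $M_s$. Given a vertex $x_i$, set $y := M(x)$ so that $M_s(x_i) = y_{s-i}$, and let $z_j \neq y_{s-i}$ be an arbitrary neighbor of $x_i$ in $\mathcal{J}_s(G)$, which by \Cref{pro:1} means $\{x,z\} \in E(G)$ and $j \leq s-i$. The argument splits according to whether $z \neq y$ or $z = y$. When $z \neq y$, property (P) of $M$ in $G$ directly supplies non-adjacency of $z$ and $y$ as well as adjacency of $z$ with every neighbor of $y$ in $G$; combined with the arithmetic $j + k \leq (s-i) + i = s$ for any neighbor $w_k$ of $y_{s-i}$ (where $k \leq i$), \Cref{pro:1} transfers both conditions to $\mathcal{J}_s(G)$. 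When $z = y$, one has $z_j = y_j$ with $j < s-i$; the required non-adjacency between $y_j$ and $y_{s-i}$ follows from the absence of the loop $\{y,y\}$ in $G$, while for any neighbor $w_k$ of $y_{s-i}$ one has $j + k \leq (s-i-1) + i < s$, so adjacency $\{y_j, w_k\}$ holds by \Cref{pro:1}.

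With property (P) for $M_s$ established, \Cref{thm:5} applied to $\mathcal{J}_s(G)$ yields the conclusion. I expect the main obstacle to be the case analysis in verifying (P); the essential arithmetic observation making everything work is that the complementary pairing $i \leftrightarrow s-i$ keeps sums of paired indices at most $s$, matching exactly the threshold in \Cref{pro:1} that governs edges of $\mathcal{J}_s(G)$.
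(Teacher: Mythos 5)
Your proof is correct, and it takes a genuinely different route from the paper's. The paper proves \Cref{thm:6} via the symbolic-power connection of \Cref{thm:3} together with the classification of irreducible $k$-covers in \Cref{thm:4}: it shows that for a very well-covered graph on $2n$ vertices every irreducible $1$-cover has degree $n$ and every irreducible $2$-cover has degree $2n$, so any minimal vertex cover of $\mathcal{J}_s(G)$, being a ``stack'' of such covers, has cardinality $n(s+1)$. The technical heart there is proving $|N(U)|=|U|$ for the independent sets $U$ of type~\ref{item:5}, and Favaron's \Cref{thm:5} is used only to extract a perfect matching of $G$ with property~(P). You instead use \Cref{thm:5} in both directions: starting from a perfect matching $M$ of $G$ satisfying~(P), you build the index-reversing perfect matching $M_s(x_i)=M(x)_{s-i}$ of $\mathcal{J}_s(G)$ -- whose pairs sit exactly on the boundary $i+(s-i)=s$ of the edge condition in \Cref{pro:1} -- and verify property~(P) for $M_s$ by direct arithmetic ($j\leqslant s-i$ and $k\leqslant i$ give $j+k\leqslant s$). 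Your two-case split (on whether the underlying vertex $z$ equals $M(x)$) is necessary and handled correctly, including the implicit points that $z$ cannot be a neighbor of $y$ in Case~1 and that $w\neq y$ in Case~2. Your argument is shorter and more self-contained, bypassing \Cref{thm:3} and \Cref{thm:4} entirely; the paper's argument is longer but ties the result to the symbolic-power machinery and yields the structural ``stacking'' description of minimal vertex covers of $\mathcal{J}_s(G)$ illustrated in \Cref{exa:3}.
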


\begin{proof}
  Let $X$ be the vertex set of $G$. Since $G$ is very well-covered, $|X|=2n$ for some positive integer
  $n$.  For every $s\in \mathbb{N}$, the graph $\mathcal{J}_s (G)$ of
  $s$-jets of $G$ has vertex set $\mathcal{J}_s (X)$ with cardinality
  $(s+1)|X| = 2n(s+1)$. To show $\mathcal{J}_s (G)$ is very well-covered, we need to prove that every minimal vertex cover of
  $\mathcal{J}_s (G)$ has cardinality $n(s+1)$. Following from
  \Cref{thm:3} (and as illustrated in \Cref{exa:3}), every minimal
  vertex cover of $\mathcal{J}_s (G)$ can be obtained by stacking
  vertex covers of $\mathcal{J}_0 (G) \cong G$ and $\mathcal{J}_1 (G)$
  corresponding to irreducible 1-covers and 2-covers of $G$. The
  cardinality of these vertex covers is equal to the degree of the
  corresponding monomials in $\Bbbk [X]$. Based on \Cref{thm:4}, it is
  enough to show that, for a very well-covered graph $G$ with $2n$
  vertices, the monomials of type \ref{item:3} have degree $n$, and
  the monomials of types \ref{item:4} and \ref{item:5} have degree
  $2n$. This is quickly verified for types \ref{item:3} and
  \ref{item:4}, so we concentrate on monomials of type \ref{item:5}.

  Let $U$ be a nonempty independent set of $G$ such that
  \begin{itemize}
  \item $N(U)$ is not a vertex cover of $G$,
  \item $X\neq U\cup N(U)$, and
  \item the induced subgraph $G[X\setminus (U\cup N(U))]$ has no
    isolated vertex and is not bipartite.
  \end{itemize}
  We need to show that the monomial
  \begin{equation*}
    m=\prod_{v\in N(U)} v^2 \prod_{z\in X\setminus (U\cup N(U))} z
  \end{equation*}
  has degree $2n$. Since $U$ is an independent set, no two vertices in
  $U$ are adjacent, which implies that $U\cap N(U) =
  \varnothing$. It follows that
  \begin{equation*}
    |X\setminus (U\cup N(U))| = 2n - |U| - |N(U)|,
  \end{equation*}
  so the degree of $m$ is
  \begin{equation*}
    2|N(U)| + |X\setminus (U\cup N(U))| =
    2n + |N(U)| - |U|.
  \end{equation*}
  Thus, to show that $m$ has degree $2n$, we need to prove that $|N(U)|=|U|$.

  By \Cref{thm:5}, $G$ has a perfect matching $M$ that satisfies the
  property (P). Using again the fact that $U\cap N(U) = \varnothing$,
  we deduce that, for every $u\in U$, $M(u)\in N(U)$. Hence, we have
  $|N(U)|\geqslant |U|$ by definition of perfect matching.  If
  $|N(U)|>|U|$, then there is a vertex $v\in N(U)$ with match
  $M(v) \notin U$ and adjacent to some $u\in U$. Suppose for contradiction that $M(v) \in N(U)$. Then, $M(v)$ is adjacent to some
  $u'\in U$. By the property (P), $u\neq u'$, since $u$ is not
  adjacent to $M(v)$, and $u$ is adjacent to $u'$, which contradicts the
  assumption that $U$ is an independent set. Since $X\neq U\cup N(U)$,
  we deduce that $M(v) \in X\setminus (U\cup N(U))$. 
  
  Note that
  $X\setminus (U\cup N(U))$ contains at least two elements, otherwise
  $N(U)$ would be a vertex cover of $G$, which is explicitly
  forbidden.  Because the induced subgraph
  $G[X\setminus (U\cup N(U))]$ has no isolated vertex, there is a
  vertex $z\in X\setminus (U\cup N(U))$ such that $z\neq M(v)$ and
  $M(v)$ is adjacent to $z$. 
  Recall that $v$ is adjacent to $u\in U$.
  By the property (P), $u$ must also be adjacent
  to $z$, which contradicts the choice of $z\notin N(U)$. Thus, we
  cannot have that $|N(U)|>|U|$ and deduce that
  $|N(U)|=|U|$, which concludes the proof.
\end{proof}

\begin{example}
  The graph in \Cref{fig:4} is an example of a very well-covered graph
  from \cite[\S 0]{MR677051}; we refer to it as Favaron's graph
  $G_1$.

  This graph is not bipartite because it contains the triangle
  $\{c,f,h\}$. Moreover, $G_1$ has a single perfect matching
  $M= \{\{a,e\},\{b,f\},\{c,g\},\{d,h\}\}$, consisting of all the
  vertical edges in the figure.  Following \Cref{thm:4}, there is a
  unique irreducible 2-cover of type \ref{item:5}; it arises by taking
  $U=\{a\}$, so $N(U)=\{e\}$ and the corresponding monomial is
  $bcde^2fgh$.
    \begin{figure}[htb]
    \centering
    \begin{tikzpicture}
      \tikzset{vertex/.style = {shape=circle,draw,thick,inner sep=2pt}}
      \tikzset{edge/.style = {thick}}
      \tikzset{every label/.append style={font=\small}}

      \begin{scope}[xscale=2,yscale=0.75]
        
        \node[vertex] (x1) at (0,4) [label=above:$a$] {};
        \node[vertex] (x2) at (1,4) [label=above:$b$] {};
        \node[vertex] (x3) at (2,4) [label=above:$c$] {};
        \node[vertex] (x4) at (3,4) [label=above:$d$] {};
        \node[vertex] (y1) at (0,0) [label=left:$e$] {};
        \node[vertex] (y2) at (1,1) [label=left:$f$] {};
        \node[vertex] (y3) at (2,1) [label=right:$g$] {};
        \node[vertex] (y4) at (3,0) [label=right:$h$] {};

        \draw[edge] (x1) -- (y1) -- (x2) -- (y2) -- (x3) -- (y3);
        \draw[edge] (y1) -- (x3) -- (y2) -- (y4) -- (y1);
        \draw[edge] (x3) -- (y4) -- (x4);
      \end{scope}
    \end{tikzpicture}
    \caption{Favaron's graph $G_1$.}
    \label{fig:4}
  \end{figure}
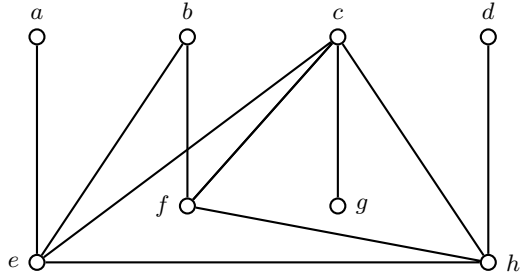
\end{example}

\section{Principal Jets of Clutters}
\label{sec:principal-jets}

In their work on jets of determinantal varieties \cite{MR3270176},
S.~Ghorpade, B.~Jonov, and B.A.~Sethuraman study a geometric object
described as ``the closure of the set of jets supported over the
smooth points of the base scheme'' and call it the \emph{principal
  component} of the jet scheme. To the best of our knowledge, this is
the first instance in the literature where the principal component is
named, although it had previously appeared in
\cite{MR2100311,MR2166800}. We define an analog of the principal
component for ideals in a polynomial ring in a way that is compatible
with this geometric notion. We avoid using the word ``component''
since the object we define is not necessarily a prime or primary
ideal.

For an ideal $I$ in the polynomial ring $\Bbbk [X]$, let $Z$ denote
the vanishing locus of $I$ in the affine space
$\mathbb{A}^{|X|}_\Bbbk$ and let $Z_\text{sing}$ denote its singular
locus. Consider the natural projection
$\pi_s \colon \mathcal{J}_s (\mathbb{A}^{|X|}_\Bbbk)\to
\mathbb{A}^{|X|}_\Bbbk$ (see \cite[Example 2.1]{MR2483946}). The
$s$-jets of $Z$ supported over the smooth locus of $Z$ are the
elements of the set
$\mathcal{J}_s (Z) \setminus \pi_s^{-1} (Z_\text{sing})$. The
polynomials vanishing on the Zariski closure of this set form an ideal
\begin{equation*}
  \mathbb{I} \left(
    \overline{\mathcal{J}_s (Z) \setminus \pi_s^{-1} (Z_\text{sing})}
  \right)\subset\Bbbk[\mathcal{J}_s(X)].
\end{equation*}

\begin{definition}
  Let $s\in \mathbb{N}$.  With the notation above, the \emph{ideal of
    principal $s$-jets of $I$} is the ideal
  \begin{equation*}
    \mathcal{P}_s (I) := 
    \mathbb{I} \left(
      \overline{\mathcal{J}_s (Z) \setminus \pi_s^{-1} (Z_\text{sing})}
    \right)\subset\Bbbk[\mathcal{J}_s(X)].
  \end{equation*}
\end{definition}

\begin{remark}\label{rem:4}
  $\mathcal{P}_s (I) = \mathcal{P}_s (\sqrt{I})$ because $I$ and
  $\sqrt{I}$ have the same vanishing locus.
\end{remark}

The next result offers an equivalent characterization of principal
jets that is more useful in computations.

\begin{proposition}\label{pro:2}
  Let $I$ be an ideal of $\Bbbk [X]$. For any $s\in \mathbb{N}$, let
  $\iota_s \colon \Bbbk [X] \to \Bbbk [ \mathcal{J}_s (X)]$ denote the
  inclusion of $\Bbbk$-algebras sending the variable $x\in X$ to
  $x_0 \in \mathcal{J}_s (X)$. There is an equality of ideals
  \begin{equation*}
    \mathcal{P}_s (I) = 
    \mathbb{I} ( \mathcal{J}_s (Z) ) :
    \iota_s (\mathbb{I} (Z_\text{sing})).
  \end{equation*}
\end{proposition}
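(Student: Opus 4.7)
The plan is to reduce the statement to a standard commutative-algebraic fact relating ideal quotients and Zariski closures of set-theoretic differences. Specifically, the key algebraic input is: for any radical ideal $J$ and any ideal $K$ of $\Bbbk[\mathcal{J}_s(X)]$, one has the equality $\mathbb{I}(\overline{V(J) \setminus V(K)}) = J : K$. This is a direct calculation — a polynomial $f$ vanishes on $V(J) \setminus V(K)$ if and only if for every point $p \in V(J)$ and every $g \in K$ the product $f(p)g(p)$ vanishes (whenever $g(p) \neq 0$ we need $f(p)=0$, otherwise the product already vanishes), which rephrases as $fg \in \mathbb{I}(V(J)) = J$ for all $g \in K$, i.e., $f \in J : K$. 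Since $J$ is radical, no distinction between the ordinary quotient and the saturation arises.

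Next, I would identify the two ideals to which this fact is applied. The closed subset $\mathcal{J}_s(Z)$ of $\mathcal{J}_s(\mathbb{A}^{|X|}_\Bbbk)$ is defined by $\mathcal{J}_s(I)$, so its vanishing ideal $\mathbb{I}(\mathcal{J}_s(Z)) = \sqrt{\mathcal{J}_s(I)}$ plays the role of $J$; this is radical by construction, so the key fact applies. For the ideal $K$, I would observe that at the level of coordinate rings the natural projection $\pi_s$ corresponds precisely to $\iota_s$: a jet is sent to its base point, i.e., to its order-zero part, which on coordinates is exactly the assignment $x \mapsto x_0$. Hence the preimage $\pi_s^{-1}(Z_\text{sing})$ is the closed subset cut out by the extended ideal $\iota_s(\mathbb{I}(Z_\text{sing})) \cdot \Bbbk[\mathcal{J}_s(X)]$, and the ideal quotient is insensitive to whether one colons with this extended ideal or with the generating set $\iota_s(\mathbb{I}(Z_\text{sing}))$ itself.

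Combining these two identifications with the key fact yields
\begin{equation*}
  \mathcal{P}_s(I) \;=\; \mathbb{I}\bigl(\overline{\mathcal{J}_s(Z) \setminus \pi_s^{-1}(Z_\text{sing})}\bigr) \;=\; \mathbb{I}(\mathcal{J}_s(Z)) : \iota_s(\mathbb{I}(Z_\text{sing})),
\end{equation*}
as claimed. I do not anticipate a real obstacle here; the main point of care is simply to verify the colon-ideal/closure formula in the form needed and to make the identification of $\pi_s$ with $\iota_s$ on coordinate rings explicit. The radicality of $\mathbb{I}(\mathcal{J}_s(Z))$ removes any subtlety between $J:K$ and $J:K^\infty$, so the argument proceeds without invoking saturation.
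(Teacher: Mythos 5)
Your proof is correct and follows essentially the paper's route: a Zariski-closure/colon-ideal identity combined with the identification of $\pi_s$ with $\iota_s$ on coordinate rings. The only cosmetic difference is that you derive the colon-ideal identity directly (taking $K$ to be an arbitrary ideal rather than a vanishing ideal, and correctly noting that radicality of $\mathbb{I}(\mathcal{J}_s(Z))$ eliminates the gap between the ordinary colon and the saturation), whereas the paper cites Cox--Little--O'Shea for the closure and quotient facts and uses the vanishing ideal $\mathbb{I}(\pi_s^{-1}(Z_\text{sing}))$ in the intermediate step.
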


\begin{proof}
  We have equalities
  \begin{equation*}
    \mathcal{P}_s (I)
    = \mathbb{I} \left(
      \mathcal{J}_s (Z) \setminus \pi_s^{-1} (Z_\text{sing})
    \right)
    = \mathbb{I} (\mathcal{J}_s (Z)) : \mathbb{I} (\pi_s^{-1} (Z_\text{sing})),
  \end{equation*}
  the first following from properties of the Zariski closure (see
  \cite[Chapter 4, \S 4, Lemma 3 (i)]{MR3330490}), and the second from
  properties of quotient ideals (see \cite[Chapter 4, \S 4, Exercise
  8]{MR3330490}). Finally, use the equivalence between affine
  varieties and affine $\Bbbk$-algebras sending $\pi_s$ to $\iota_s$
  (see, for example, \cite[Chapter 1, Corollary 3.8]{MR0463157}) or a
  direct computation to conclude that
  $\mathbb{I} (\pi_s^{-1} (Z_\text{sing})) = \iota_s (\mathbb{I}
  (Z_\text{sing}))$.
\end{proof}

\begin{remark}
  The \texttt{Jets} package \cite{MR4538337,JetsSource} for the
  software Macaulay2 \cite{M2} includes a \texttt{principalComponent}
  method to compute principal jets which uses a description similar to
  the one offered in \Cref{pro:2}.
\end{remark}

We proceed to study principal jets of monomial ideals, with the ultimate goal to describe their irreducible components. Based on
\Cref{rem:4}, it is enough to consider squarefree monomial ideals,
i.e., edge ideals of clutters. The case of edge ideals of graphs was
studied by N.~Iammarino \cite[\S 3]{2201.04164}.

\begin{proposition}\label{pro:3}
  Let $C$ be a clutter with vertex set $X$, and let $W_1,\dots,W_n$ be
  the minimal vertex covers of $C$. For every $s\in\mathbb{N}$,
  \begin{equation*}
    \mathcal{P}_s (I(C)) =
    I (\mathcal{J}_s (C)) : \bigcap_{1\leqslant i<j\leqslant n}
    \langle \mathcal{J}_0 (W_i \cup W_j)\rangle.
  \end{equation*}
  In particular, $\mathcal{P}_s (I(C))$ is a squarefree monomial
  ideal.
\end{proposition}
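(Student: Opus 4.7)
The plan is to unpack the characterization in \Cref{pro:2}, namely $\mathcal{P}_s(I(C)) = \mathbb{I}(\mathcal{J}_s(Z)) : \iota_s(\mathbb{I}(Z_{\text{sing}}))$, and identify each ideal in combinatorial terms. Since $I(C)$ is radical, the vanishing ideal of the jet scheme $\mathcal{J}_s(Z) = V(\mathcal{J}_s(I(C)))$ is $\sqrt{\mathcal{J}_s(I(C))}$, which by \Cref{def:2} equals $I(\mathcal{J}_s(C))$. Thus I would immediately obtain $\mathbb{I}(\mathcal{J}_s(Z)) = I(\mathcal{J}_s(C))$.

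Next, I would identify the singular locus of $Z$. The primary decomposition $I(C) = \bigcap_{i=1}^n \langle W_i \rangle$ realizes $Z$ as the union of smooth coordinate subspaces $L_i = V(\langle W_i \rangle)$, and the minimality of the $W_i$'s makes them pairwise incomparable, so no $L_i$ is contained in another $L_j$. At a point $p \in L_i \cap L_j$ with $i \neq j$, the local ring $\mathcal{O}_{Z,p}$ has at least two minimal primes and so cannot be a regular domain, while at a point $p \in L_i$ lying on no other $L_j$ some open neighborhood of $p$ in $\mathbb{A}^{|X|}_\Bbbk$ meets $Z$ only in $L_i$, making $p$ smooth. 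Hence
\begin{equation*}
  Z_{\text{sing}} = \bigcup_{i<j}(L_i \cap L_j) = \bigcup_{i<j} V(\langle W_i \cup W_j \rangle),
\end{equation*}
and taking vanishing ideals gives $\mathbb{I}(Z_{\text{sing}}) = \bigcap_{i<j} \langle W_i \cup W_j \rangle$.

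I would then push this through $\iota_s$. Since $\iota_s$ realizes $\Bbbk[\mathcal{J}_s(X)]$ as a polynomial ring over $\Bbbk[X]$ in the extra variables $\{x_j : x \in X,\ j=1,\ldots,s\}$, it is a free, hence flat, extension, and flat extensions commute with finite intersections of ideals, yielding $\iota_s(\mathbb{I}(Z_{\text{sing}})) = \bigcap_{i<j} \langle \mathcal{J}_0(W_i \cup W_j) \rangle$. Substituting into \Cref{pro:2} produces the stated formula. The final claim is then routine: $I(\mathcal{J}_s(C))$ is a squarefree monomial ideal, the colon of a squarefree monomial ideal by a single monomial is again squarefree monomial (the generators are just divided by their gcd with that monomial), and a colon by a monomial ideal is the intersection of such single-monomial colons over its generators, so the result remains squarefree monomial. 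I expect the main obstacle to be the identification of $Z_{\text{sing}}$: one must carefully argue via local rings or the Jacobian criterion that singularities of $Z$ occur exactly along the pairwise intersections of components, and the incomparability of minimal vertex covers is essential here to prevent any component from being absorbed into another.
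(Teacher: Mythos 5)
Your proposal is correct and follows essentially the same route as the paper: identify $\mathbb{I}(\mathcal{J}_s(Z))$ with $I(\mathcal{J}_s(C))$ via \Cref{def:2}, decompose $Z$ into the coordinate subspaces cut out by the minimal vertex covers, identify $Z_{\text{sing}}$ with the union of their pairwise intersections to get $\mathbb{I}(Z_{\text{sing}}) = \bigcap_{i<j}\langle W_i\cup W_j\rangle$, and then apply \Cref{pro:2}. The paper is terse about the singular-locus identification and leaves implicit that extension along $\iota_s$ commutes with the finite intersection; your local-ring argument (two minimal primes in $\mathcal{O}_{Z,p}$ rule out regularity; away from pairwise intersections $Z$ looks locally like a linear space) and your flatness observation are accurate elaborations of exactly those steps, not a new approach.
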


\begin{proof}
  Let $Z$ be the vanishing locus of the edge ideal $I(C)$.
  By \Cref{def:2}, we have
  \begin{equation*}
    \mathbb{I} (\mathcal{J}_s (Z)) =
    \sqrt{ \mathcal{J}_s (I(C)) } =
    I (\mathcal{J}_s (C)).
  \end{equation*}
  Observe that $Z$ decomposes as the union of the linear spaces
  defined by the ideals
  $\langle W_1\rangle, \dots, \langle W_n\rangle$. Hence,
  $Z_\text{sing}$ consists of those points that belong to the
  intersection of at least two components of $Z$. For
  $1\leqslant i<j\leqslant n$, the intersection of the components
  $\langle W_i\rangle$ and $\langle W_j\rangle$ is defined by the
  ideal
  $\langle W_i\rangle +\langle W_j\rangle = \langle W_i \cup
  W_j\rangle$. Therefore, we have
  \begin{equation*}
    \mathbb{I} (Z_\text{sing}) =
    \bigcap_{1\leqslant i<j\leqslant n}
    \langle W_i \cup W_j\rangle.
  \end{equation*}
  The description of $\mathcal{P}_s (I(C))$ follows from \Cref{pro:2}.

  Since intersections and quotients of squarefree monomial ideals are
  squarefree monomial ideals, it follows that $\mathcal{P}_s (I(C))$
  is also a squarefree monomial ideal.
\end{proof}

In order to prove our main result, we need
the following technical lemma. The proof is inspired by \cite[Discussion
  4.11]{MR3779569}. Recall that the \emph{support} of a monomial $m$
in a polynomial ring, denoted $\operatorname{supp} (m)$, is the set of
variables dividing $m$.

\begin{lemma}\label{lem:2}
  Let $I$ be a squarefree monomial ideal in the polynomial ring
  $R_n = \Bbbk [x_1,\dots,x_n]$, and let $s$ be any positive integer. If
  $M$ is a minimal monomial generator of $I^{(s)}$, then the support
  of $M$ is the union of the supports of some minimal monomial
  generators of $I$.
\end{lemma}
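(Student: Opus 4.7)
The plan is to translate via the Stanley--Reisner dictionary. Let $X := \{x_1,\dots,x_n\}$ and let $C$ be the clutter on $X$ whose edges are the supports of the minimal generators of $I$, so $I = I(C)$. The minimal primes of $I$ are then the ideals $\langle W \rangle$ for $W$ a minimal vertex cover of $C$, and by the same principle as in \eqref{eq:1},
\[
I^{(s)} = \bigcap_{W} \langle W \rangle^s,
\]
where the intersection is over all minimal vertex covers. Set $\deg_W(M) := \sum_{y \in W} \deg_y(M)$. A monomial $M$ lies in $I^{(s)}$ iff $\deg_W(M) \geq s$ for every minimal vertex cover $W$, equivalently for every vertex cover (since $\deg_W$ is monotone in $W$). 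Moreover, $M$ is a minimal generator of $I^{(s)}$ iff, in addition, for every $x \in \operatorname{supp}(M)$, some minimal vertex cover $W_x \ni x$ achieves equality $\deg_{W_x}(M) = s$.

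Let $S := \operatorname{supp}(M)$. The desired conclusion is equivalent to the assertion that every $x \in S$ belongs to some edge of $C$ contained in $S$. I will argue the contrapositive: suppose $x \in S$ is \emph{isolated} in the induced sub-clutter on $S$, meaning every edge of $C$ through $x$ has at least one vertex in $X \setminus S$. I will show $M/x \in I^{(s)}$, contradicting the minimality of $M$.

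The heart of the argument is a \emph{swap}. For any minimal vertex cover $W$ of $C$ with $x \in W$, choose, for each edge $e \ni x$ of $C$, some vertex $y_e \in e \setminus S$ (which exists by the isolation hypothesis) and define
\[
W' := (W \setminus \{x\}) \cup \{y_e : e \in E(C),\ x \in e\}.
\]
A direct case analysis shows $W'$ is a vertex cover of $C$: edges $e'$ with $x \notin e'$ retain a covering vertex from $W \setminus \{x\}$, and each edge $e$ through $x$ is covered by its $y_e$. Because every $y_e$ lies outside $S$, it contributes $0$ to $\deg_{W'}(M)$, so
\[
\deg_W(M) - \deg_x(M) = \deg_{W \setminus \{x\}}(M) = \deg_{W'}(M) \geq s,
\]
where the final inequality uses $M \in I^{(s)}$. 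Since $\deg_x(M) \geq 1$, this forces $\deg_W(M) \geq s+1$, whence $\deg_W(M/x) \geq s$. Combined with the trivial case $x \notin W$, this verifies $M/x \in I^{(s)}$, the desired contradiction.

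The main obstacle is producing the swap and justifying it: one must marry the isolation of $x$ (to produce the replacements $y_e$) with the cover property of $W$ (to preserve coverage of edges not through $x$), and then exploit that each $y_e$ carries zero weight in $M$ to transfer the bound for $W'$ back into a strict bound for $W$. Allowing $W'$ to be an arbitrary (not necessarily minimal) vertex cover keeps the weight accounting clean.
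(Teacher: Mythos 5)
Your proof is correct, and it takes a genuinely different route from the paper's. The paper proceeds by induction on the number of variables: it isolates the last variable $x_{n+1}$, writes $I = (J + \langle x_{n+1}\rangle) \cap K$ with $J$ the subideal of generators avoiding $x_{n+1}$ and $K = I : \langle x_{n+1}\rangle$, and then invokes the binomial expansion $(J + \langle x_{n+1}\rangle)^{(s)} = \sum_{i=0}^{s} J^{(s-i)} \langle x_{n+1}^i\rangle$ (a nontrivial cited result on symbolic powers of sums) before splitting into cases depending on whether $M$ is divisible by $x_{n+1}$. Your argument instead works globally and non-inductively from the description $I^{(s)} = \bigcap_W \langle W\rangle^s$ over minimal vertex covers, together with the elementary observation that $M \in \langle W\rangle^s$ iff $\deg_W(M) \geq s$; the ``swap'' replacing $x$ by out-of-support vertices to build a new cover $W'$, and the resulting shift $\deg_W(M) \geq s + \deg_x(M)$, is a clean combinatorial move that bypasses both the induction and the symbolic binomial theorem. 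Your approach is more self-contained and, to my mind, more transparent about the structural reason the lemma holds (the cover $W'$ witnesses that the weight on $x$ is slack); the paper's proof has the advantage of following a standard inductive template that is convenient once the machinery of symbolic powers of sums is already on hand. One small expository point: the sentence characterizing minimality via equality $\deg_{W_x}(M) = s$ is correct but is never actually used — you only need that $M$ minimal forces $M/x \notin I^{(s)}$ for all $x$ in the support, which is immediate. You could omit it to streamline.
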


\begin{proof}
  If $I=\langle 1\rangle$, then $I^{(s)} = \langle 1\rangle$ and the
  result is true.

  Assume $I$ is proper and proceed by induction on $n$. If $n=1$, then
  $I=\langle x_1\rangle$. Hence, we have
  $I^{(s)} = I^s = \langle x_1^s\rangle$ and the result is true.

  Now consider an integer $n\geqslant 1$ and assume the result is true
  for squarefree monomial ideals in $R_n$. Let $I$ be a squarefree
  monomial ideal in $R_{n+1}$. Let $J$ be the ideal of $R_{n+1}$ generated by
  all monomials in $I$ not divisible by $x_{n+1}$, and let
  $K = I:\langle x_{n+1}\rangle$. The ideals $J$ and $K$ are
  squarefree monomial ideals whose minimal monomial generators are not
  divisible by $x_{n+1}$. Moreover, we have
  $I = (J+\langle x_{n+1}\rangle) \cap K$, which implies
  \begin{equation*}
    I^{(s)} = (J+\langle x_{n+1}\rangle)^{(s)} \cap K^{(s)}
  \end{equation*}
  for every positive integer $s$. Using \cite[Theorem 7.8]{MR3566223}
  or the more general \cite[Theorem 3.4]{MR4074049}, we deduce that
  \begin{equation*}
    (J+\langle x_{n+1}\rangle)^{(s)} =
    \sum_{i=0}^s J^{(s-i)} \langle x_{n+1}\rangle^{(i)}
    = \sum_{i=0}^s J^{(s-i)} \langle x_{n+1}^i\rangle.
  \end{equation*}
  Altogether, for every positive integer $s$, we get an equality
  \begin{equation}\label{eq:3}
    I^{(s)} =
    \left( \sum_{i=0}^s J^{(s-i)} \langle x_{n+1}^i\rangle \right)
    \cap K^{(s)}.
  \end{equation}
  Now, let $M$ be a minimal monomial generator of $I^{(s)}$.
  We have the following two cases.

  \begin{description}
  \item[Case 1] $M$ is not divisible by $x_{n+1}$. Then, \Cref{eq:3}
    implies that $M$ is divisible by a minimal monomial generator $M'$
    of $J^{(s)}$. By definition, we have $J\subseteq K$. Hence,
    $M'\in J^{(s)}\subseteq K^{(s)}$, and so $M'\in I^{(s)}$. We deduce that $M=M'$, as otherwise
    $M$ would not be a minimal generator of $I^{(s)}$.  Since $M$ is
    also a minimal monomial generator of the ideal
    $J^{(s)} \cap R_n = (J \cap R_n)^{(s)}$ of $R_n$, the inductive
    hypothesis allows us to conclude that
    \begin{equation*}
      \operatorname{supp} (M) =
      \bigcup_{j=1}^p \operatorname{supp} (f_j)
    \end{equation*}
    for some minimal monomial generators $f_1,\dots,f_p$ of $J$.
  \item[Case 2] $M$ is divisible by $x_{n+1}$. Then, \Cref{eq:3}
    implies that $M$ is divisible by:
    \begin{itemize}
    \item $M_1 x_{n+1}^h$, for some integer $h\in \{1,\dots,s\}$ and a
      minimal monomial generator $M_1$ of $J^{(s-h)}$;
    \item a minimal monomial generator $M_2$ of $K^{(s)}$.
    \end{itemize}
    Using the inductive hypothesis as in the previous case, we obtain
    that
    \begin{equation*}
      \operatorname{supp} (M_1) =
      \bigcup_{j=1}^p \operatorname{supp} (f_j)
    \end{equation*}
    for some minimal monomial generators $f_1,\dots,f_p$ of $J$, and
    \begin{equation*}
      \operatorname{supp} (M_2) =
      \bigcup_{k=1}^q \operatorname{supp} (g_k)
    \end{equation*}
    for some minimal monomial generators $g_1,\dots,g_q$ of $K$. Given
    the minimality of $M$, we deduce that
    \begin{equation*}
      \begin{split}
        \operatorname{supp} (M)
        &= \operatorname{supp} (M_1 x_{n+1}^h) \cup \operatorname{supp} (M_2) \\
        &= \operatorname{supp} (M_1) \cup \{x_{n+1}\} \cup \operatorname{supp} (M_2) \\
        &= \bigcup_{j=1}^p \operatorname{supp} (f_j) \cup \{x_{n+1}\} \cup
          \bigcup_{k=1}^q \operatorname{supp} (g_k)\\
        &= \bigcup_{j=1}^p \operatorname{supp} (f_j) \cup
          \bigcup_{k=1}^q \operatorname{supp} (g_k x_{n+1}).
      \end{split}
    \end{equation*}
  \end{description}
  Combining the two cases, for every minimal monomial generator $M$ of
  $I^{(s)}$ we have
  \begin{equation*}
    \operatorname{supp} (M) =
    \bigcup_{j=1}^p \operatorname{supp} (f_j) \cup
    \bigcup_{k=1}^q \operatorname{supp} (g_k x_{n+1}),
  \end{equation*}
  for some minimal monomial generators $f_1,\dots,f_p$ of $J$ and some
  minimal monomial generators $g_1,\dots,g_q$ of $K$, where $q=0$ when
  $M$ is not divisible by $x_{n+1}$. To conclude, observe that
  $I = J+K\langle x_{n+1}\rangle$ and, therefore, a minimal monomial
  generator of $I$ is a minimal monomial generator $f$ of $J$ or it is
  equal to $g x_{n+1}$ where $g$ is minimal monomial generator of
  $K$. This shows that the result is true in $R_{n+1}$.
\end{proof}

We come to the main result of this section on the irreducible components of the principal jets of a monomial ideal.

\begin{theorem}\label{thm:7}
  Let $C$ be a clutter with vertex set $X$, and let $W_1,\dots,W_n$ be
  the minimal vertex covers of $C$. For every $s\in\mathbb{N}$,
  \begin{equation*}
    \mathcal{P}_s (I(C)) =
    \bigcap_{k=1}^n \langle \mathcal{J}_s (W_k) \rangle
  \end{equation*}
  is the irredundant primary decomposition of $\mathcal{P}_s (I(C))$.
\end{theorem}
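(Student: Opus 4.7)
The plan is to start from the description in \Cref{pro:3},
\[
  \mathcal{P}_s(I(C)) = I(\mathcal{J}_s(C)) : J, \qquad J := \bigcap_{1\leqslant i<j\leqslant n} \langle \mathcal{J}_0(W_i \cup W_j)\rangle,
\]
and to determine which minimal primes of $I(\mathcal{J}_s(C))$ survive the colon. Since $I(\mathcal{J}_s(C))$ is a squarefree monomial ideal, it decomposes as $\bigcap_W \langle W\rangle$, with $W$ ranging over all minimal vertex covers of $\mathcal{J}_s(C)$. Distributivity of the colon over intersections, together with the elementary fact that $\langle W\rangle : J = \langle W\rangle$ when $J \not\subseteq \langle W\rangle$ and $\langle W\rangle : J = \Bbbk[\mathcal{J}_s(X)]$ otherwise, reduces the theorem to showing that the set $\{W : J \not\subseteq \langle W\rangle\}$ equals $\{\mathcal{J}_s(W_1), \ldots, \mathcal{J}_s(W_n)\}$. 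An easy check with monomial generators gives that $J \subseteq \langle W\rangle$ if and only if there exist $i<j$ with $\mathcal{J}_0(W_i \cup W_j) \subseteq W$, i.e., $W_i \cup W_j \subseteq W^{(0)}$, where I set $W^{(0)} := \{x \in X : x_0 \in W\}$.

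The heart of the proof is a structural description of $W^{(0)}$ as a union of certain $W_k$. I would obtain this by passing to symbolic powers via \Cref{thm:3}: the map $W \mapsto \prod_{w\in W} w$ identifies minimal vertex covers of $\mathcal{J}_s(C)$ with minimal generators of $I_c(\mathcal{J}_s(C))$, and depolarization sends these to minimal generators of $I_c(C)^{(s+1)}$. A short calculation using \Cref{lem:1} shows that the support of the associated minimal generator $M := \delta_s(\prod_{w\in W} w)$ of $I_c(C)^{(s+1)}$ is exactly $W^{(0)}$. Applying \Cref{lem:2} to the squarefree monomial ideal $I_c(C)$ at exponent $s+1$, and recalling that the minimal generators of $I_c(C)$ are supported on $W_1, \ldots, W_n$, one concludes that $W^{(0)} = \bigcup_{k \in S} W_k$ for some nonempty $S \subseteq \{1,\ldots,n\}$.

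The equality of the theorem then follows by a case split. If $|S| = 1$, say $S = \{k\}$, then $W^{(0)} = W_k$; \Cref{lem:1} gives $W \subseteq \mathcal{J}_s(W_k)$, and since $\mathcal{J}_s(W_k)$ is itself a minimal vertex cover by \Cref{thm:2}\,\ref{item:1}, one gets $W = \mathcal{J}_s(W_k)$. Moreover $J \not\subseteq \langle W\rangle$ in this case, because any containment $W_i \cup W_j \subseteq W_k$ would force $W_i = W_j = W_k$ by minimality, contradicting $i<j$. If $|S| \geqslant 2$, choosing any $i<j$ in $S$ yields $W_i \cup W_j \subseteq W^{(0)}$, so $J \subseteq \langle W\rangle$ and $W$ is eliminated. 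For irredundancy, the minimal vertex covers $W_1, \ldots, W_n$ of $C$ are pairwise incomparable, hence so are the primes $\langle \mathcal{J}_s(W_k)\rangle$; standard prime avoidance for intersections of primes then prevents any component from being omitted. The main obstacle is the structural claim $W^{(0)} = \bigcup_{k \in S} W_k$, which is precisely where \Cref{lem:2} enters and where the combinatorial bridge between minimal vertex covers of $\mathcal{J}_s(C)$ and minimal vertex covers of $C$ is forged.
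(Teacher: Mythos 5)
Your proof is correct and follows essentially the same route as the paper: both start from \Cref{pro:3}, distribute the colon over the minimal primes of the squarefree ideal $I(\mathcal{J}_s(C))$, pass through \Cref{thm:3} and \Cref{lem:2} to write the depolarized support of a minimal vertex cover of $\mathcal{J}_s(C)$ as a union $\bigcup_{k\in S}W_k$, and then split on $|S|=1$ versus $|S|\geqslant 2$. The one place where you streamline the paper's argument is the colon computation: the paper exhibits an explicit monomial $F_k\in J\setminus\mathfrak{p}$ to conclude $\mathfrak{p}:J=\mathfrak{p}$, whereas you invoke the general fact that for a prime $\mathfrak{p}$, $\mathfrak{p}:J$ equals $\mathfrak{p}$ or the whole ring according to whether $J\subseteq\mathfrak{p}$; this is a cleaner and equivalent way to carry out the same reduction, and your subsequent characterization of $J\subseteq\langle W\rangle$ via $W_i\cup W_j\subseteq W^{(0)}$ matches the paper's Case 2.
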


\begin{proof}
  Since $I (\mathcal{J}_s (C))$ is a squarefree monomial ideal, and
  therefore radical, it is equal to the intersection of its minimal
  primes $\mathfrak{p}_1,\dots,\mathfrak{p}_r$. By \Cref{pro:3}, we
  have
  \begin{equation*}
    \begin{split}
      \mathcal{P}_s (I(C))
      &= I (\mathcal{J}_s (C)) : \bigcap_{1\leqslant i<j\leqslant n}
      \langle \mathcal{J}_0 (W_i \cup W_j)\rangle\\
      &= \bigcap_{h=1}^r \left( \mathfrak{p}_h : \bigcap_{1\leqslant i<j\leqslant n}
      \langle \mathcal{J}_0 (W_i \cup W_j)\rangle \right),
    \end{split}
  \end{equation*}
  where the second equality follows from properties of quotient ideals
  (see \cite[Chapter 4, \S 4, Exercise 17]{MR3330490}). We claim that
  the quotients in the intersection above are either equal to the
  whole ring or to one of the ideals
  $\langle \mathcal{J}_s (W_k) \rangle$, and that all such ideals
  appear in the intersection. Since the ideals
  $\langle \mathcal{J}_s (W_k) \rangle$ are prime and incomparable
  under inclusion, the result would then follow.
  
  If $\mathfrak{p}$ is a minimal prime of $I(\mathcal{J}_s (C))$, then
  there is a minimal vertex cover $V$ of $\mathcal{J}_s (C)$ such that
  $\mathfrak{p} = \langle V\rangle$. Equivalently, if $M$ is the
  product of the elements in $V$, then $M$ is a minimal generator of
  $I_c (\mathcal{J}_s (C))$, the ideal of covers of $\mathcal{J}_s (C)$. By
  \Cref{thm:3}, the monomial $\delta_s (M)$ is a minimal generator of
  $I_c (C)^{(s+1)}$, where
  $\delta_s \colon \Bbbk [\mathcal{J}_s (X)] \to \Bbbk [X]$ is the
  depolarization map. By \Cref{lem:2}, the support of $\delta_s (M)$
  is the union of the supports of some minimal monomial generators
  $m_1,\dots,m_t$ of $I_c (C)$. In turn, the support of a minimal
  monomial generator $m_j$ of $I_c (C)$ is a minimal vertex cover $W_{i_j}$ of
  $C$. Altogether, we have
  \begin{equation}\label{eq:4}
    \delta_s (V) = \delta_s ( \operatorname{supp} (M)) =
    \operatorname{supp} (\delta_s (M)) =
    \bigcup_{j=1}^t \operatorname{supp} (m_j) =
    \bigcup_{j=1}^t W_{i_j}
  \end{equation}
  for some $i_1,\dots,i_t \in \{1,\dots,n\}$. We now consider the following two cases, depending on whether $t=1$ or $t>1$.
  \begin{description}
  \item[Case 1] $\delta_s (V) = W_k$ for some $k\in \{1,\dots,n\}$.
    This implies
    $V\subseteq \delta_s^{-1} (W_k) = \mathcal{J}_s (W_k)$. Since
    $\mathcal{J}_s (W_k)$ is a minimal vertex cover of
    $\mathcal{J}_s (C)$ by \Cref{thm:2}, we deduce that
    $V= \mathcal{J}_s (W_k)$ or, equivalently,
    $\mathfrak{p} = \langle \mathcal{J}_s (W_k)\rangle$.

    The minimality of the vertex covers $W_1,\dots,W_n$ implies that,
    for any pair of distinct indices $i,j\in \{1,\dots,n\}$,
    $(W_i \cup W_j) \setminus W_k \neq \varnothing$. Hence, we can
    introduce the monomial
    \begin{equation*}
      F_k = \prod_{x\in X\setminus W_k} x_0 \in \bigcap_{1\leqslant i<j\leqslant n}
      \langle \mathcal{J}_0 (W_i \cup W_j)\rangle.
    \end{equation*}
    Since $\mathfrak{p}$ is prime, the assumption $\delta_s (V) = W_k$
    implies that $F_k \notin \mathfrak{p} = \langle V\rangle$. If $m$
    is any monomial in $\mathfrak{p} : \langle F_k\rangle$, then
    $mF_k \in \mathfrak{p}$ and, therefore,
    $m\in\mathfrak{p}$. Because $\mathfrak{p} : \langle F_k\rangle$ is
    a monomial ideal, we deduce that
    $\mathfrak{p} : \langle F_k\rangle \subseteq \mathfrak{p}$. Using
    basic properties of quotient ideals, we conclude that
    \begin{equation*}
      \mathfrak{p} \subseteq
      \mathfrak{p} : \bigcap_{1\leqslant i<j\leqslant n}
      \langle \mathcal{J}_0 (W_i \cup W_j)\rangle
      \subseteq \mathfrak{p} : \langle F_k \rangle
      \subseteq \mathfrak{p},
    \end{equation*}
    and therefore
    \begin{equation*}
      \mathfrak{p} : \bigcap_{1\leqslant i<j\leqslant n}
      \langle \mathcal{J}_0 (W_i \cup W_j)\rangle
      = \mathfrak{p}
      = \langle \mathcal{J}_s (W_k)\rangle.
    \end{equation*}
    Notice also that by taking $V=\mathcal{J}_s (W_k)$ we can
    guarantee that $\langle \mathcal{J}_s (W_k)\rangle$ will appear
    among the primes in the decomposition of $\mathcal{P}_s (I(C))$.
  \item[Case 2] $\delta_s (V) \supseteq W_{k}\cup W_l$ for some
    $k,l\in \{1,\dots,n\}$ with $k\neq l$. This implies
    $V\supseteq \mathcal{J}_0 (W_k\cup W_l)$ or, equivalently,
    $\mathfrak{p} \supseteq \langle \mathcal{J}_0 (W_{k}\cup W_{l})
    \rangle$. We conclude that
    \begin{equation*}
      \mathfrak{p} : \bigcap_{1\leqslant i<j\leqslant n}
      \langle \mathcal{J}_0 (W_i \cup W_j)\rangle
      \supseteq \mathfrak{p} :
      \langle \mathcal{J}_0 (W_{k} \cup W_{l})\rangle
      = \Bbbk [\mathcal{J}_s (X)].
    \end{equation*}
  \end{description}
  Since \Cref{eq:4} forces one of the two cases above to occur, the
  claim in the opening paragraph of the proof holds true.
\end{proof}

An explicit description for the generators of the principal jets of
the edge ideal of a clutter is now within easy reach.

\begin{corollary}\label{cor:2}
  Let $C$ be a clutter with vertex set $X$. For every
  $s\in\mathbb{N}$,
  \begin{equation*}
    \mathcal{P}_s (I(C)) =
    \langle x_{1,i_1} \cdots x_{r,i_r} \colon \{x_1,\dots,x_r\} \in E(C),
    0\leqslant i_1,\dots,i_r\leqslant s\rangle,
  \end{equation*}
  and this generating set is minimal.
\end{corollary}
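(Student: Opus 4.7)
The plan is to derive the generating set directly from the primary decomposition in \Cref{thm:7}, which gives
\begin{equation*}
  \mathcal{P}_s (I(C)) = \bigcap_{k=1}^{n} \langle \mathcal{J}_s (W_k)\rangle,
\end{equation*}
where $W_1,\dots,W_n$ are the minimal vertex covers of $C$. By \Cref{pro:3}, $\mathcal{P}_s (I(C))$ is a squarefree monomial ideal, so it suffices to characterize which squarefree monomials lie in the intersection.

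For a squarefree monomial $m = x_{j_1,i_1}\cdots x_{j_r,i_r}$, I would observe that $m \in \langle \mathcal{J}_s (W_k)\rangle$ iff some variable $x_{j_\ell,i_\ell}$ belongs to $\mathcal{J}_s (W_k)$, iff $x_{j_\ell} \in W_k$. Hence $m \in \mathcal{P}_s (I(C))$ iff the ``projected support'' $S_m := \{x_{j_1},\dots,x_{j_r}\} \subseteq X$ meets every minimal vertex cover of $C$. The latter is equivalent to $S_m$ containing some edge of $C$: if $S_m$ contains an edge, then $S_m$ meets every vertex cover; conversely, if $S_m$ meets every minimal vertex cover, then $X \setminus S_m$ is not a vertex cover, so some edge of $C$ is contained in $S_m$. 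This immediately yields the inclusion $\supseteq$ in the claimed description, since each $x_{1,i_1}\cdots x_{r,i_r}$ with $\{x_1,\dots,x_r\}\in E(C)$ has $S_m$ equal to an edge.

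For the reverse inclusion, I would take any minimal generator $m$ of $\mathcal{P}_s (I(C))$; by the above, $S_m$ contains some edge $\{x_1,\dots,x_r\}$ of $C$. Choosing the indices $i_1,\dots,i_r$ appearing in $m$, the monomial $x_{1,i_1}\cdots x_{r,i_r}$ divides $m$ and lies in $\mathcal{P}_s (I(C))$, so minimality of $m$ forces equality. Finally, I would verify minimality of the proposed generating set: if one removes a variable from $x_{1,i_1}\cdots x_{r,i_r}$, the new projected support $\{x_1,\dots,x_r\}\setminus\{x_j\}$ is a proper subset of the edge $\{x_1,\dots,x_r\}$, and by the clutter property no edge is properly contained in another, so this subset contains no edge of $C$, and the divisor does not belong to $\mathcal{P}_s (I(C))$.

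The argument is essentially pure bookkeeping once \Cref{thm:7} is in hand; the only subtlety to keep in mind is that the projected support $S_m$ may have strictly fewer elements than the usual support of $m$ (multiple jet-vertices $x_{j,i}, x_{j,i'}$ project to the same $x_j$), so the translation between monomials and their projected supports must be done carefully. Invoking the clutter property at the minimality step is what rules out extraneous generators.
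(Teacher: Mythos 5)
Your proof is correct and takes essentially the same approach as the paper: both derive the result from the primary decomposition in \Cref{thm:7} and translate membership in that intersection to the condition that the depolarized (``projected'') support of a monomial contain an edge of $C$, which is what the paper expresses via $\delta_s(m)\in I(C)$. Your separate verification of minimality of the generating set via the clutter property is, if anything, a bit more explicit than the paper's, which simply asserts that the listed monomials are the minimal generators after establishing that they generate.
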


\begin{proof}
  Let $W_1,\dots,W_n$ be the minimal vertex covers of $C$, and
  consider a monomial $m\in \mathcal{P}_s (I(C))$. Applying the
  depolarization map $\delta_s$ to the decomposition of
  $\mathcal{P}_s (I(C))$ in \Cref{thm:7}, we get
  \begin{equation*}
    \delta_s (m) \in
    \bigcap_{k=1}^n \delta_s (\langle \mathcal{J}_s (W_k) \rangle)
    \subseteq \bigcap_{k=1}^n \langle W_k \rangle = I(C).
  \end{equation*}
  Hence, there is an edge $\{x_1,\dots,x_r\}$ of $C$ such that $x_1\cdots x_r$ divides $\delta_s(m)$. This implies that $x_{1,i_1}\cdots x_{r,i_r}$ divides $m$ for some integers
  $i_1,\dots,i_r \in \{0,\dots,s\}$. On the other hand, a minimal
  vertex cover $W_k$ of $C$ intersects the edge $\{x_1,\dots,x_r\}$ in
  some vertex, say $x_j$. Thus, we have
  $x_{j,i_j}\in \mathcal{J}_s (W_k)$ for every
  $i_j \in \{0,\dots,s\}$, which allows us to conclude that
  $x_{1,i_1}\cdots x_{r,i_r} \in \mathcal{P}_s (I(C))$ for any choice of
  indices $i_1,\dots,i_r \in \{0,\dots,s\}$. We conclude that the
  monomials $x_{1,i_1}\cdots x_{r,i_r}$ such that
  $\{x_1,\dots,x_r\} \in E(C)$ and $i_1,\dots,i_r \in \{0,\dots,s\}$
  are the minimal generators of $\mathcal{P}_s (I(C))$.
\end{proof}

In light of this result, we introduce the following.

\begin{definition}
  Let $C$ be a clutter with vertex set $X$. For $s\in \mathbb{N}$, the
  \emph{clutter of principal $s$-jets of $C$}, denoted
  $\mathcal{P}_s (C)$, is the clutter with vertex set
  $\mathcal{J}_s (X)$ whose edge ideal is given by
  $\mathcal{P}_s (I(C))$.
\end{definition}

\begin{example}\label{exa:4}
  Consider the graph $G$ with vertex set $\{x,y,z\}$ whose edge ideal
  is given by $I(G) = \langle xy,yz\rangle$ (the same ideal as in
  \Cref{exa:1}). By \Cref{cor:2}, we have
  \begin{equation*}
    \begin{split}
      \mathcal{P}_1 (I(G)) = \langle &x_0 y_0,y_0 z_0, x_0 y_1,x_1 y_0,
                          y_0 z_1,y_1 z_0, x_1 y_1, y_1 z_1 \rangle,\\
      \mathcal{P}_2 (I(G)) = \langle &x_0 y_0,y_0 z_0, x_0 y_1,x_1 y_0,
                             y_0 z_1,y_1 z_0,x_0 y_2,x_1 y_1,x_2 y_0,\\
                          &y_0 z_2,y_1 z_1, y_2 z_0,  x_1 y_2, x_2 y_1,
                          y_1 z_2, y_2 z_1, x_2 y_2, y_2 z_2\rangle.
    \end{split}
  \end{equation*}
  The graphs $G, \mathcal{P}_1 (G)$, and $\mathcal{P}_2 (G)$ are
  pictured in \Cref{fig:5}.
  Moreover, since $I(G) = \langle x,z\rangle \cap \langle y\rangle$,
  we have the following decompositions:
  \begin{equation*}
    \begin{split}
      \mathcal{P}_1 (I(G)) = \langle
      &x_0, z_0, x_1, z_1\rangle
        \cap \langle y_0,y_1\rangle,\\
      \mathcal{P}_2 (I(G)) = \langle
      &x_0, z_0, x_1, z_1, x_2, z_2\rangle
        \cap \langle y_0,y_1,y_2\rangle.
    \end{split}
  \end{equation*}
    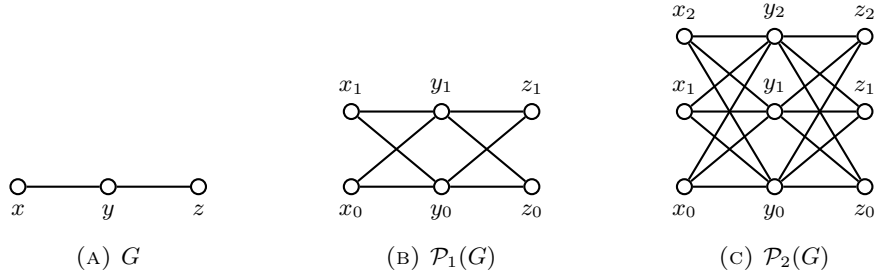
\begin{figure}[htb]
    \centering
    \begin{subfigure}[b]{0.3\textwidth}
      \centering
      \begin{tikzpicture}[xscale=1.2]
        \tikzset{vertex/.style = {shape=circle,draw,thick,inner sep=2pt}}
        \tikzset{edge/.style = {thick}}
        \tikzset{every label/.append style={font=\small}}

        \node[vertex] (x) at (1,0) [label=below:{$x$}] {};
        \node[vertex] (y) at (2,0) [label=below:{$y$}] {};
        \node[vertex] (z) at (3,0) [label=below:{$z$}] {};

        \draw[edge] (x) -- (y) -- (z);
      \end{tikzpicture}
      \caption{$G$}
    \end{subfigure}
    \hfill
    \begin{subfigure}[b]{0.3\textwidth}
      \centering
      \begin{tikzpicture}[xscale=1.2]
        \tikzset{vertex/.style = {shape=circle,draw,thick,inner sep=2pt}}
        \tikzset{edge/.style = {thick}}
        \tikzset{every label/.append style={font=\small}}

        \node[vertex] (x0) at (1,0) [label=below:{$x_0$}] {};
        \node[vertex] (y0) at (2,0) [label=below:{$y_0$}] {};
        \node[vertex] (z0) at (3,0) [label=below:{$z_0$}] {};
        \node[vertex] (x1) at (1,1) [label=above:{$x_1$}] {};
        \node[vertex] (y1) at (2,1) [label=above:{$y_1$}] {};
        \node[vertex] (z1) at (3,1) [label=above:{$z_1$}] {};

        \draw[edge] (x0) -- (y0) -- (z0);
        \draw[edge] (x1) -- (y0) -- (z1);
        \draw[edge] (x0) -- (y1) -- (z0);
        \draw[edge] (x1) -- (y1) -- (z1);
      \end{tikzpicture}
      \caption{$\mathcal{P}_1 (G)$}
    \end{subfigure}
    \hfill
    \begin{subfigure}[b]{0.3\textwidth}
      \centering
      \begin{tikzpicture}[xscale=1.2]
        \tikzset{vertex/.style = {shape=circle,draw,thick,inner sep=2pt}}
        \tikzset{edge/.style = {thick}}
        \tikzset{every label/.append style={font=\small}}

        \node[vertex] (x0) at (1,0) [label=below:{$x_0$}] {};
        \node[vertex] (y0) at (2,0) [label=below:{$y_0$}] {};
        \node[vertex] (z0) at (3,0) [label=below:{$z_0$}] {};
        \node[vertex] (x1) at (1,1) [label=above:{$x_1$}] {};
        \node[vertex] (y1) at (2,1) [label=above:{$y_1$}] {};
        \node[vertex] (z1) at (3,1) [label=above:{$z_1$}] {};
        \node[vertex] (x2) at (1,2) [label=above:{$x_2$}] {};
        \node[vertex] (y2) at (2,2) [label=above:{$y_2$}] {};
        \node[vertex] (z2) at (3,2) [label=above:{$z_2$}] {};

        \draw[edge] (x0) -- (y0) -- (z0);
        \draw[edge] (x1) -- (y0) -- (z1);
        \draw[edge] (x0) -- (y1) -- (z0);
        \draw[edge] (x1) -- (y1) -- (z1);
        \draw[edge] (x2) -- (y0) -- (z2);
        \draw[edge] (x0) -- (y2) -- (z0);
        \draw[edge] (x1) -- (y2) -- (z1);
        \draw[edge] (x2) -- (y1) -- (z2);
        \draw[edge] (x2) -- (y2) -- (z2);
      \end{tikzpicture}
      \caption{$\mathcal{P}_2 (G)$}
    \end{subfigure}
    \caption{A path graph, and its first and second principal jets.}
    \label{fig:5}
  \end{figure}
\end{example}

\begin{remark}
  For a graph $G$, the graph $\mathcal{P}_s (G)$ is isomorphic to the
  \emph{disjunction} (also known as \emph{disjunctive product} or
  \emph{co-normal product}) of $G$ and a graph with $s+1$ vertices and
  no edges (see, for example, \cite[Definition 10]{MR2404539} and
  references therein).  We are grateful to Jack Huizenga and Zach
  Teitler for mentioning this connection with the existing literature.
\end{remark}

As is the case in the geometric setting, principal jets of clutters
are better behaved than jets. The following result, which is an
immediate consequence of \Cref{thm:7}, illustrates this behavior by
completely describing the minimal vertex covers of the principal jets
of a clutter.

\begin{corollary}
  Let $C$ be a clutter, and let $W_1,\dots,W_n$ be the minimal vertex
  covers of $C$. For every $s\in\mathbb{N}$, the sets
  $\mathcal{J}_s (W_1),\dots,\mathcal{J}_s (W_n)$ are the minimal
  vertex covers of the clutter of principal $s$-jets
  $\mathcal{P}_s (C)$. In particular, $C$ is (very) well-covered if
  and only if, for every $s\in \mathbb{N}$, $\mathcal{P}_s (C)$ is
  (very) well-covered.
\end{corollary}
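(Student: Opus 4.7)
The plan is to read off the minimal vertex covers of $\mathcal{P}_s (C)$ directly from the primary decomposition obtained in \Cref{thm:7}, and then to check that the cardinality conditions defining well-covered and very well-covered graphs transform in a controlled way under the map $W \mapsto \mathcal{J}_s (W)$.

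First, I would invoke \Cref{thm:7} to get the irredundant primary decomposition
\begin{equation*}
  \mathcal{P}_s (I(C)) = \bigcap_{k=1}^n \langle \mathcal{J}_s (W_k) \rangle.
\end{equation*}
Each $\langle \mathcal{J}_s (W_k)\rangle$ is a monomial prime ideal generated by variables, and irredundancy says these primes are pairwise incomparable, so they are exactly the minimal primes of $\mathcal{P}_s (I(C))$. Since $\mathcal{P}_s (I(C))$ is the edge ideal of $\mathcal{P}_s (C)$ by definition, the standard correspondence between minimal primes of a squarefree monomial ideal and minimal vertex covers of the associated clutter (e.g.\ \cite[Lemma 6.3.37]{MR3362802}) immediately yields that $\mathcal{J}_s (W_1),\dots,\mathcal{J}_s (W_n)$ are precisely the minimal vertex covers of $\mathcal{P}_s (C)$.

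For the second assertion, I would use the identity $|\mathcal{J}_s (W_k)| = (s+1)|W_k|$, which holds because the $x_0,\dots,x_s$ attached to distinct $x\in W_k$ are distinct vertices of $\mathcal{J}_s (X)$. If $C$ is well-covered, then all $|W_k|$ share a common value $d$, so all minimal vertex covers of $\mathcal{P}_s (C)$ have cardinality $(s+1)d$; conversely, if $\mathcal{P}_s (C)$ is well-covered, then all $(s+1)|W_k|$ are equal, and dividing by $s+1$ gives the well-coveredness of $C$. For the very well-covered case, I would additionally use $|\mathcal{J}_s (X)| = (s+1)|X|$: the equality $|W_k| = \tfrac{1}{2}|X|$ holds for all $k$ if and only if $|\mathcal{J}_s (W_k)| = \tfrac{1}{2}|\mathcal{J}_s (X)|$ holds for all $k$.

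There is essentially no obstacle here beyond being careful about the equivalence between minimal primes of an edge ideal and minimal vertex covers, and about the fact that $\mathcal{J}_s$ applied to a set of cardinality $m$ produces a set of cardinality $(s+1)m$. The entire argument is bookkeeping on top of \Cref{thm:7}, which is why the result can be stated as an immediate consequence.
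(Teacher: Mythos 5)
Your proposal is correct, and it elaborates the argument exactly the way the paper intends: the paper states this corollary as an immediate consequence of \Cref{thm:7} with no further proof, and your elaboration — read off the minimal primes from the irredundant prime decomposition, identify them with minimal vertex covers via the standard squarefree-monomial-ideal correspondence, then track cardinalities using $|\mathcal{J}_s(W)| = (s+1)|W|$ and $|\mathcal{J}_s(X)| = (s+1)|X|$ — is precisely the bookkeeping the authors leave to the reader. No gaps.
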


\section{Invariants of Principal Jets of Clutters}
\label{sec:invar-princ-jets}

In this section, we study Hilbert series and other invariants of principal jets of squarefree monomial ideals. This is inspired by S. Ghorpade, B. Jonov, and B.A. Sethuraman's result on a simple formula for the Hilbert series of
the first order principal component of the $2\times 2$ minors of a
generic matrix \cite[Theorem
18]{MR3270176}.

We begin with a technical result on the depolarization map $\delta_s$ (\Cref{def:3}) that will later be used to express our formulas.

\begin{lemma}\label{lem:3}
  Let $X$ be a finite set. For every $s\in\mathbb{N}$ and every subset
  $U\subseteq X$, the number of subsets $V\subseteq \mathcal{J}_s (X)$
  of cardinality $k$ such that $\delta_s (V) = U$ depends only on the
  cardinality $j$ of $U$ and is given by
  \begin{equation}\label{eq:5}
    \ell_s (j,k) =
    \sum_{\substack{a\in \mathbb{Z}_+^j,\\|a| = k}}
    \prod_{h=1}^j \binom{s+1}{a_h},
  \end{equation}
  where $\mathbb{Z}_+$ denotes the set of positive integers, and
  $|a|= a_1+\dots+a_j$ for every
  $a = (a_1,\dots,a_j) \in \mathbb{Z}_+^j$.
\end{lemma}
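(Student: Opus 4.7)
The plan is to count subsets $V \subseteq \mathcal{J}_s(X)$ with $\delta_s(V) = U$ by stratifying according to how $V$ distributes among the fibers of $\delta_s$. Writing $U = \{x^{(1)}, \ldots, x^{(j)}\}$, I would first observe that $\mathcal{J}_s(X)$ decomposes as the disjoint union of the fibers $F_x = \{x_0, \ldots, x_s\}$ for $x \in X$, each of cardinality $s+1$. Any subset $V \subseteq \mathcal{J}_s(X)$ is then uniquely determined by the collection of intersections $V \cap F_x$ as $x$ ranges over $X$.

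Next I would translate the two constraints on $V$ into constraints on these intersections. The condition $\delta_s(V) \subseteq U$ is equivalent to $V \cap F_x = \varnothing$ for every $x \in X \setminus U$, while the reverse inclusion $\delta_s(V) \supseteq U$ is equivalent to $V \cap F_{x^{(h)}} \neq \varnothing$ for every $h \in \{1, \ldots, j\}$. Setting $a_h := |V \cap F_{x^{(h)}}|$, the first condition forces the non-$U$ fibers to contribute nothing, and the second forces $a_h \geqslant 1$. The cardinality condition $|V| = k$ then becomes $a_1 + \cdots + a_j = k$, i.e., $a = (a_1, \ldots, a_j) \in \mathbb{Z}_+^j$ with $|a| = k$.

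Finally, for each fixed tuple $a$ satisfying these constraints, I would count the number of admissible $V$: for each $h$, one independently chooses $a_h$ elements out of the $s+1$ elements of $F_{x^{(h)}}$, giving $\binom{s+1}{a_h}$ choices; multiplying across $h$ and summing over all valid $a$ yields exactly the formula \eqref{eq:5}. Since this count is expressed entirely in terms of $j = |U|$ and $s$, the independence from the particular choice of $U$ is automatic.

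There is essentially no obstacle here: the argument is a direct bijective enumeration, and the main care needed is only to record correctly that $a_h \geqslant 1$ (rather than $a_h \geqslant 0$) is exactly the condition ensuring $x^{(h)} \in \delta_s(V)$, which is what forces the sum to range over $\mathbb{Z}_+^j$ rather than $\mathbb{N}^j$.
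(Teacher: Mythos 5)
Your proposal is correct and follows essentially the same approach as the paper: partitioning $\mathcal{J}_s(X)$ into the fibers of $\delta_s$, recording the sizes $a_h = |V \cap F_{x^{(h)}}|$, observing that $\delta_s(V) = U$ forces each $a_h \geqslant 1$ and empties the non-$U$ fibers, and then counting independently across fibers with $\binom{s+1}{a_h}$ choices each. The only cosmetic difference is that the paper treats the case $U = \varnothing$ separately (checking that the empty sum/product convention gives $\ell_s(0,0) = 1$), whereas your argument subsumes it into the general case; both are fine.
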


\begin{proof}
  If $U=\varnothing$, then the only subset
  $V\subseteq \mathcal{J}_s (X)$ with $\delta_s (V) = \varnothing$ is
  $V=\varnothing$, so $\ell_s (0,0) = 1$. In this case, the
  summation in \Cref{eq:5} is indexed by the singleton
  $\mathbb{Z}_+^0$, and the only summand is an empty product which is
  equal to 1. Therefore, the formula holds when $U=\varnothing$.

  Now, consider a nonempty subset $U = \{x_1,\dots,x_j\} \subseteq X$
  and a subset $V\subseteq \mathcal{J}_s (X)$.  For every
  $h\in\{1,\dots,j\}$, let $a_h = |\delta_s^{-1} (\{x_h\}) \cap V|$,
  i.e., the number of preimages of $x_h$ contained in $V$.  If $V$ has
  cardinality $k$ and satisfies $\delta_s (V) = U$, then
  $a_1,\dots,a_j$ are positive and add up to $k$, i.e.,
  $a \in \mathbb{Z}_+^j$ and $|a| = k$. Since
  $|\delta_s^{-1} (\{x_h\})|=s+1$, there are $\binom{s+1}{a_h}$
  different ways to select the elements of
  $\delta_s^{-1} (\{x_h\}) \cap V$. Given that preimages can be
  selected independently for each $x_h$, we multiply these binomial
  coefficients to count the number of subsets $V$ corresponding to a
  particular choice of $a \in \mathbb{Z}_+^j$ with $|a| = k$. Note
  that a subset $V$ cannot correspond to two different
  $a$'s. Therefore, the total number of subsets $V$ is obtained by
  adding the products of binomial coefficients over the different
  possible $a$'s.
\end{proof}

The function
$\ell_s \colon \mathbb{N} \times \mathbb{N} \to \mathbb{N}$ counts
the number $\ell_s(j,k)$ of ways a subset $U\subseteq X$ of cardinality $j$ lifts to
a subset $V\subseteq \mathcal{J}_s (X)$ of cardinality $k$ along the
restriction $\mathcal{J}_s (X) \to X$ of the depolarization map
$\delta_s$. We will enter the values of $\ell_s (j,k)$
in a matrix $L_s$ with row index $j\geqslant 0$ and column index
$k\geqslant 0$ (see \Cref{exa:5} and \Cref{exa:6}). We refer to $\ell_s$
as the \emph{order $s$ lifting function}, and to $L_s$ as the
\emph{order $s$ lifting matrix}. Note that $\ell_s$ does not depend on
$|X|$.

\begin{remark}
  The lifting function $\ell_s$ can be computed by setting
  \begin{equation*}
    \ell_s (j,k) =
    \begin{cases}
      0, &\text{if } k<j \text{ or } k>(s+1)j,\\
      1, &\text{if } k=(s+1)j,\\
      (s+1)^j, &\text{if } k=j,      
    \end{cases}
  \end{equation*}
  and then using the recursive formula
  \begin{equation*}
    \ell_s (j,k) = \sum_{i=1}^{\min \{k,s+1\}}
    \binom{s+1}{i} \ell_s (j-1,k-i).
  \end{equation*}
  This approach may be more efficient than using the explicit formula
  of \Cref{lem:3}. We have implemented methods to compute lifting
  functions and lifting matrices in version 1.2 of the Jets package
  for Macaulay2.
\end{remark}

Recall that, for a homogeneous ideal $I$ in a standard graded
polynomial ring $\Bbbk [X]$, the \emph{Hilbert series} of
$\Bbbk [X]/ I$ is defined as
\begin{equation*}
  \sum_{i\in \mathbb{Z}} \dim_\Bbbk (\Bbbk [X] / I)_i \, t^i,
\end{equation*}
where the coefficient of $t^i$ is the dimension of the graded
component of degree $i$ in $\Bbbk [X] / I$ as a vector space over
$\Bbbk$. When $I \subseteq \Bbbk [X]$ is a squarefree monomial ideal,
one may associate to $I$ a simplicial complex $\Delta$ with vertex set
$X$ by taking as faces the supports of all the squarefree monomials
not contained in $I$. The ideal $I$ is called the
\emph{Stanley--Reisner ideal} of $\Delta$, and we set
$\Bbbk [\Delta] = \Bbbk [X] / I$.  The \emph{$f$-vector}
$f(\Delta) = (f_{-1} (\Delta), f_0 (\Delta), \dots, f_i (\Delta),
\dots)$ of $\Delta$ has components $f_i (\Delta)$ counting the number
of $i$-dimensional faces in $\Delta$, where the dimension of a face is
one less than its cardinality. We note that $f_{-1} (\Delta) = 1$,
since the empty set $\varnothing$ is the only face of dimension
$-1$. There is a well-known connection between the Hilbert series of
$\Bbbk [\Delta]$ and the $f$-vector of $\Delta$: if
$d = \dim (\Delta) + 1$, then the Hilbert series of $\Bbbk [\Delta]$
is given by
\begin{equation*}
  \frac{1}{(1-t)^d} \sum_{i=0}^d f_{i-1} (\Delta) t^i (1-t)^{d-i}.
\end{equation*}
Other algebraic invariants of interest can be conveniently read off
the combinatorial data. For example, $d$ is the \emph{Krull dimension}
of $\Bbbk [\Delta]$, and $f_{d-1} (\Delta)$ is the \emph{multiplicity}
of $\Bbbk [\Delta]$ (or the \emph{degree} of the projective variety
defined by $I$).  We refer the reader to \cite[\S 5]{MR1251956} or
\cite[\S 1]{MR2110098} for Stanley--Reisner ideals and $f$-vectors,
\cite[Appendix]{MR1251956} for a review of Krull dimension, and
\cite[\S 4.1]{MR1251956} for Hilbert series and multiplicity.

The $f$-vector of a finite dimensional simplicial complex has only a
finite number of nonzero entries but, for our next statement, we find
it more convenient to extend such an $f$-vector with infinitely many
zeros.

\begin{theorem}\label{thm:8}
  Let $I \subseteq \Bbbk [X]$ be the Stanley--Reisner ideal of a
  simplicial complex $\Delta$ with vertex set $X$ and $f$-vector
  $f(\Delta)$. For $s\in \mathbb{N}$, the ideal $\mathcal{P}_s (I)$ of
  principal $s$-jets of $I$ is the Stanley--Reisner ideal of a
  simplicial complex $\Gamma_s$ with vertex set $\mathcal{J}_s (X)$
  and $f$-vector
  \begin{equation*}
    f(\Gamma_s) = f(\Delta) L_s,
  \end{equation*}
  where $L_s$ is the order $s$ lifting matrix.  In particular,
  \begin{itemize}
  \item if $\Bbbk [X]/I$ has Krull dimension $d$, then
    $\Bbbk [\mathcal{J}_s(X)]/\mathcal{P}_s(I)$ has Krull dimension $(s+1)d$;
  \item $\Bbbk [\mathcal{J}_s(X)]/\mathcal{P}_s(I)$ has the same multiplicity
    $f_{d-1} (\Delta)$ as $\Bbbk [X]/I$.
  \end{itemize}
\end{theorem}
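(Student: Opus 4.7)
The plan is to unwind the statement via the Stanley--Reisner correspondence and leverage the explicit generating set for $\mathcal{P}_s(I)$ given in \Cref{cor:2}. First, since $I$ is a squarefree monomial ideal, the clutter $C$ with $I=I(C)$ has the minimal non-faces of $\Delta$ as its edges. By \Cref{cor:2}, $\mathcal{P}_s(I)=\mathcal{P}_s(I(C))$ is itself a squarefree monomial ideal, hence the Stanley--Reisner ideal of a well-defined simplicial complex $\Gamma_s$ with vertex set $\mathcal{J}_s(X)$.

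The crux is a clean characterization of the faces of $\Gamma_s$. I would argue that a subset $V\subseteq\mathcal{J}_s(X)$ is a face of $\Gamma_s$ if and only if $\delta_s(V)$ is a face of $\Delta$. Indeed, by \Cref{cor:2}, $V$ fails to be a face precisely when some minimal generator $x_{1,i_1}\cdots x_{r,i_r}$ with $\{x_1,\dots,x_r\}\in E(C)$ divides $\prod_{v\in V}v$, equivalently when there is an edge $\{x_1,\dots,x_r\}$ of $C$ together with indices $i_1,\dots,i_r\in\{0,\dots,s\}$ such that $x_{j,i_j}\in V$ for all $j$; and this occurs exactly when $\delta_s(V)\supseteq\{x_1,\dots,x_r\}$ for some edge of $C$, i.e., when $\delta_s(V)$ is not a face of $\Delta$.

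With this bijection in hand, the counting step is immediate: partition the $k$-element faces of $\Gamma_s$ by their image under $\delta_s$. For a fixed face $U\in\Delta$ of cardinality $j$, the number of $k$-element $V\subseteq\mathcal{J}_s(X)$ with $\delta_s(V)=U$ depends only on $j$ and equals $\ell_s(j,k)$ by \Cref{lem:3}. Summing over $U$ gives
\begin{equation*}
  f_{k-1}(\Gamma_s) \;=\; \sum_{j\geqslant 0} f_{j-1}(\Delta)\,\ell_s(j,k),
\end{equation*}
which is exactly the matrix identity $f(\Gamma_s)=f(\Delta)L_s$ once the $f$-vectors are indexed as row vectors starting at position $-1$. (The base case $f_{-1}(\Gamma_s)=1$ is automatic from $\ell_s(0,0)=1$.)

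The two corollaries drop out from the tail behavior of $L_s$. If $\dim\Delta=d-1$, then $f_{j-1}(\Delta)=0$ for $j>d$, and $\ell_s(j,k)$ vanishes unless $j\leqslant k\leqslant (s+1)j$; the largest $k$ with a nonzero contribution is therefore $k=(s+1)d$, so $\dim\Gamma_s=(s+1)d-1$ and the Krull dimension of $\Bbbk[\mathcal{J}_s(X)]/\mathcal{P}_s(I)$ equals $(s+1)d$. For the top coefficient, only the pair $(j,k)=(d,(s+1)d)$ contributes, and $\ell_s(d,(s+1)d)=1$, so $f_{(s+1)d-1}(\Gamma_s)=f_{d-1}(\Delta)$, yielding equality of multiplicities. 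The main conceptual hurdle is verifying the face-bijection cleanly; once it is in place, the rest is essentially bookkeeping with \Cref{lem:3}.
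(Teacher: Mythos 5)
Your proposal is correct and follows essentially the same route as the paper: characterize faces of $\Gamma_s$ as preimages under $\delta_s$ of faces of $\Delta$ via \Cref{cor:2}, count lifts using \Cref{lem:3} to obtain $f(\Gamma_s)=f(\Delta)L_s$, and read off dimension and multiplicity from the support and top entry of $L_s$. The only differences are cosmetic (cardinality vs.\ dimension indexing, and an extra sentence of detail in the divisibility argument).
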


\begin{proof}
  A subset $V\subseteq \mathcal{J}_s (X)$ is a $k$-dimensional face of
  $\Gamma_s$ if and only if $V$ is the support of a squarefree
  monomial not contained in $\mathcal{P}_s (I)$. By \Cref{cor:2}, this
  is the case if and only if $\delta_s (V)$ is the support of a
  squarefree monomial not contained in $I$, i.e., if and only if
  $\delta_s (V)$ is a face of $\Delta$ of dimension at most
  $k$. Hence, we can count faces of $\Gamma_s$ by counting lifts of
  faces of $\Delta$.

  Let $U$ be a $j$-dimensional face of $\Delta$, i.e., a subset
  $U\subseteq X$ of cardinality $j+1$. \Cref{lem:3} gives the number
  $\ell_s (j+1,k+1)$ of ways $U$ lifts to a subset
  $V\subseteq \mathcal{J}_s (X)$ of cardinality $k+1$ along the
  depolarization map, i.e., the number of ways $U$ lifts to a
  $k$-dimensional face of $\Gamma_s$. Since different subsets $U$
  necessarily have different lifts, we deduce that the number of
  $k$-dimensional faces of $\Gamma_s$ that are lifts of some
  $j$-dimensional face of $\Delta$ is $f_j (\Delta) \ell_s
  (j+1,k+1)$. Therefore, the number of $k$-dimensional faces of
  $\Gamma_s$ is
  \begin{equation}\label{eq:6}
    f_k (\Gamma_s) = \sum_{j\geqslant -1} f_j (\Delta) \, \ell_s (j+1,k+1),
  \end{equation}
  where the sum is finite because $\ell_s (j+1,k+1) = 0$ when $j>k$. The
  right hand side of \Cref{eq:6} is the product of $f (\Delta)$ as a
  row vector with the $k+1$ column of the lifting matrix
  $L_s$. Therefore, $f (\Gamma_s) = f(\Delta) L_s$ as stated.

  If $\Bbbk [\Delta]$ has Krull dimension $d$, then $\Delta$ is a
  simplicial complex of dimension $d-1$. If $U$ is a face of $\Delta$
  of dimension $d-1$, then $\mathcal{J}_s (U)$ is the only lift of $U$
  to a face of $\Gamma_s$ of dimension $(s+1)d-1$. Moreover, every
  face of $\Delta$ of dimension $j-1$ lifts to a face of $\Gamma_s$ of
  dimension at most $(s+1)j-1$, so $\Gamma_s$ has no face of dimension
  $(s+1)d$ or higher. Therefore, $\Gamma_s$ has dimension $(s+1)d-1$,
  and $\Bbbk [\Gamma_s]$ has Krull dimension $(s+1)d$, as claimed.

  Finally, observe that $f_j (\Delta) = 0$ for $j\geqslant d$, while
  \begin{equation*}
    \ell_s (j+1,(s+1)d) =
    \begin{cases}
      0, &j<d-1,\\
      1, &j=d-1.
    \end{cases}
  \end{equation*}
  Hence, \Cref{eq:6} implies that the multiplicity of
  $\Bbbk [\Gamma_s]$ is $f_{(s+1)d-1} (\Gamma_s) = f_{d-1} (\Delta)$.
\end{proof}

\begin{remark}
  It is already known that the $s$-jets of an affine variety of
  dimension $d$ form an affine variety of dimension $(s+1)d$ (see
  \cite[Corollary 2.11]{MR2483946}). Since principal jets are
  supported on the smooth locus of the variety, they also have
  dimension $(s+1)d$. The proof of \Cref{thm:8} offers an alternative
  for monomial ideals.
\end{remark}

\begin{example}\label{exa:5}
  Consider the ideal $I=\langle vwx,xy,yz\rangle$ in the standard
  graded polynomial ring $\Bbbk [v,w,x,y,z]$. This is the
  Stanley--Reisner ideal of a simplicial complex $\Delta$ with facets
  (i.e., maximal faces) $\{v,w,y\},\{v,w,z\},\{v,x,z\}$, and
  $\{w,x,z\}$, which is pictured in \Cref{fig:6}.
  \begin{figure}[htb]
    \centering
    \begin{tikzpicture}[scale=1.25]
      \tikzset{vertex/.style = {shape=circle,draw,thick,inner sep=2pt,fill=white}}
      \tikzset{edge/.style = {thick}}
      \tikzset{every label/.append style={font=\small}}

      \draw[thick,fill=lightgray] (180:1) -- (60:1) -- (0,0) -- cycle;
      \draw[thick,fill=lightgray] (180:1) -- (300:1) -- (0,0) -- cycle;
      \draw[thick,fill=lightgray] (300:1) -- (60:1) -- (0,0) -- cycle;
      \draw[thick,fill=lightgray] (300:1) -- (60:1) -- (0:2) -- cycle;

      \node[vertex] (z) at (0,0) [label=right:$z$] {};
      \node[vertex] (v) at (60:1) [label=above:$v$] {};
      \node[vertex] (x) at (180:1) [label=left:$x$] {};
      \node[vertex] (w) at (300:1) [label=below:$w$] {};
      \node[vertex] (y) at (0:2) [label=below:$y$] {};
    \end{tikzpicture}
    \caption{The simplicial complex $\Delta$.}
    \label{fig:6}
  \end{figure}
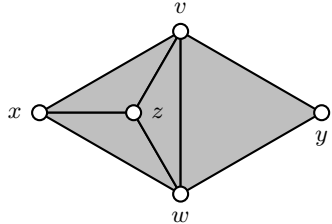
  The $f$-vector of $\Delta$ is $(1,5,8,4)$.  In
  particular, $\Bbbk [\Delta]$ has dimension 3 and multiplicity 4.

  By \Cref{thm:8}, $\mathcal{P}_1 (I)$ is the Stanley--Reisner ideal of
  a simplicial complex $\Gamma_1$ with $f$-vector:
  \begin{equation*}
    \begin{pmatrix}
      1&5&8&4
    \end{pmatrix}
    \begin{pmatrix}
      1&0&0&0&0&0&0\\
      0&2&1&0&0&0&0\\
      0&0&4&4&1&0&0\\
      0&0&0&8&12&6&1
    \end{pmatrix}
    =
    \begin{pmatrix}
      1&10&37&64&56&24&4
    \end{pmatrix}.
  \end{equation*}
  We deduce that $\Bbbk [\Gamma_1]$ has dimension 6 and multiplicity
  1. Similarly, $\mathcal{P}_2 (I)$ is the Stanley--Reisner ideal of a
  simplicial complex $\Gamma_2$ with $f$-vector:
  \begin{equation*}
    \begin{split}
      &\begin{pmatrix}
        1&5&8&4
      \end{pmatrix}
      \begin{pmatrix}
        1&0&0&0&0&0&0&0&0&0\\
        0&3&3&1&0&0&0&0&0&0\\
        0&0&9&18&15&6&1&0&0&0\\
        0&0&0&27&81&108&81&36&9&1
      \end{pmatrix}\\
      =&
      \begin{pmatrix}
        1&15&87&257&444&480&332&144&36&4
      \end{pmatrix}.
    \end{split}
  \end{equation*}
  We deduce that $\Bbbk [\Gamma_2]$ dimension 9 and multiplicity 4.
\end{example}

Recall that, for a homogeneous ideal $I$ in the standard graded
polynomial ring $\Bbbk [X]$, the \emph{Betti numbers} of $\Bbbk [X]/I$
are defined as
\begin{equation*}
  \beta_{i,j} (\Bbbk [X]/I) = \dim_\Bbbk \operatorname{Tor}_i (\Bbbk [X]/I,\Bbbk)_j,
\end{equation*}
i.e., $\beta_{i,j} (\Bbbk [X]/I)$ is the dimension of the degree $j$ component of $\operatorname{Tor}_i (\Bbbk [X]/I,\Bbbk)$ as a vector
space over $\Bbbk$. One often assembles the Betti numbers into the \emph{Betti diagram}, where the $i$-th column for any $i\geq 0$ consists of the Betti numbers $(\beta_{i,i+j}(\Bbbk [X]/I))_{j\geq 0}$.

For our results on Betti numbers of principal $s$-jets, it is also convenient to assemble the Betti numbers into an upper-triangular matrix, where the $i$-th row of the corresponding Betti diagram is slid $i$ units to the right:
 \begin{equation*}
    [\beta_{j-i,j}(\Bbbk[X]/I)]_{0\leqslant i\leqslant j}=\begin{pmatrix}
      \beta_{0,0} & \beta_{1,1} & \beta_{2,2} & \beta_{3,3} & \dots \\
                  & \beta_{0,1} & \beta_{1,2} & \beta_{2,3} & \dots \\
                  & & \beta_{0,2} & \beta_{1,3} & \dots \\
      & & & \ddots & \ddots
    \end{pmatrix}.
  \end{equation*}
The first column of the Betti diagram becomes the main diagonal of the matrix, and the subsequent columns of the Betti diagram become the subsequent diagonals of the matrix.

An invariant that can be easily obtained from the
Betti numbers of $\Bbbk [X]/I$ is the \emph{(Castelnuovo--Mumford)
  regularity} of $\Bbbk [X]/I$, which is defined as
\begin{equation*}
  \max \{j-i : \beta_{i,j} (\Bbbk [X]/I) \neq 0\}.
\end{equation*}
When $I$ is the Stanley--Reisner ideal of a simplicial complex
$\Delta$, the Betti numbers of $\Bbbk [\Delta]=\Bbbk [X]/I$ can be
computed in terms of the homology groups of certain subcomplexes of
$\Delta$ via \emph{Hochster's Formula} (see \cite[\S 5]{MR1251956} or
\cite[\S 1, 5]{MR2110098}).

\begin{theorem}\label{thm:9}
  Let $I \subseteq \Bbbk [X]$ be the Stanley--Reisner ideal of a
  simplicial complex $\Delta$ with vertex set $X$. For
  $s\in \mathbb{N}$, the ideal $\mathcal{P}_s (I)$ of principal
  $s$-jets of $I$ is the Stanley--Reisner ideal of a simplicial complex
  $\Gamma_s$ with vertex set $\mathcal{J}_s (X)$.  The Betti numbers
  of $\Bbbk [\Gamma_s]$ and $\Bbbk [\Delta]$ are related by
  \begin{equation*}
    [\beta_{k-i,k} (\Bbbk [\Gamma_s])]_{0\leqslant i\leqslant k} =
    [\beta_{j-i,j} (\Bbbk [\Delta])]_{0\leqslant i\leqslant j} L_s,
  \end{equation*}
  where $L_s$ is the order $s$ lifting matrix.  In particular,
  $\Bbbk [\mathcal{J}_s(X)]/\mathcal{P}_s(I)$ has the same regularity as $\Bbbk [X]/I$.
\end{theorem}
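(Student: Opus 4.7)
The plan is to combine Hochster's formula with a careful analysis of the induced subcomplexes of $\Gamma_s$. By \Cref{cor:2}, a subset $V\subseteq \mathcal{J}_s(X)$ is a face of $\Gamma_s$ if and only if $\delta_s(V)$ is a face of $\Delta$; in other words, $\Gamma_s$ is the ``pullback'' of $\Delta$ along the depolarization map, and for any $V\subseteq \mathcal{J}_s(X)$ the induced subcomplex $(\Gamma_s)_V$ is the analogous pullback of $\Delta_{\delta_s(V)}$ along the restricted surjection $\delta_s\colon V\twoheadrightarrow \delta_s(V)$. Hochster's formula then gives
\[
  \beta_{k-i,k}(\Bbbk[\Gamma_s]) = \sum_{\substack{V\subseteq \mathcal{J}_s(X) \\ |V|=k}} \dim_\Bbbk \tilde{H}_{i-1}((\Gamma_s)_V;\Bbbk),
\]
with the analogous formula for $\Bbbk[\Delta]$.

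The technical heart of the argument is the following topological lemma: for any simplicial complex $\Sigma$ on vertex set $Y$ and any surjection of finite sets $\pi\colon Z\twoheadrightarrow Y$, the pullback complex $\Sigma^\pi := \{V'\subseteq Z : \pi(V')\in \Sigma\}$ is homotopy equivalent to $\Sigma$. I would prove this by choosing a set-theoretic section $\sigma\colon Y\to Z$ of $\pi$ and considering the simplicial retraction $r\colon \Sigma^\pi\to\Sigma^\pi$ sending each vertex $z$ to $\sigma(\pi(z))$. The image of $r$ is the full subcomplex on $\sigma(Y)$, which is isomorphic to $\Sigma$ via $\sigma$, and $r$ is contiguous to the identity: for any face $V'\in\Sigma^\pi$ one has $\pi(V'\cup r(V'))=\pi(V')\in\Sigma$, so $V'\cup r(V')\in\Sigma^\pi$. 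Since contiguous simplicial maps are homotopic, $r$ is a homotopy equivalence, yielding an isomorphism $\tilde{H}_\ast((\Gamma_s)_V;\Bbbk)\cong \tilde{H}_\ast(\Delta_{\delta_s(V)};\Bbbk)$ for every $V$.

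With this isomorphism in hand, I partition the Hochster sum for $\Bbbk[\Gamma_s]$ according to the image $U=\delta_s(V)$. By \Cref{lem:3}, each $U\subseteq X$ of cardinality $j$ admits exactly $\ell_s(j,k)$ lifts to a subset $V\subseteq\mathcal{J}_s(X)$ of cardinality $k$, so
\[
  \beta_{k-i,k}(\Bbbk[\Gamma_s]) = \sum_j \ell_s(j,k) \sum_{\substack{U\subseteq X \\ |U|=j}} \dim_\Bbbk \tilde{H}_{i-1}(\Delta_U;\Bbbk) = \sum_j \beta_{j-i,j}(\Bbbk[\Delta])\,\ell_s(j,k),
\]
which is precisely the $(i,k)$ entry of the matrix product. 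For the regularity statement, I observe that $L_s$ has nonnegative entries and strictly positive diagonal values $\ell_s(j,j)=(s+1)^j$; combined with the nonnegativity of Betti numbers, this forces the $i$-th row of the shifted Betti matrix of $\Bbbk[\Gamma_s]$ to be nonzero if and only if the $i$-th row of the shifted Betti matrix of $\Bbbk[\Delta]$ is nonzero, so the maximum nonzero row index (i.e., the regularity) agrees. The main obstacle will be establishing the homotopy equivalence $(\Gamma_s)_V\simeq\Delta_{\delta_s(V)}$; the contiguity argument above is the cleanest route I see, though one could alternatively invoke the nerve theorem applied to the cover of $(\Gamma_s)_V$ by the contractible fiber simplices $\delta_s^{-1}(x)\cap V$.
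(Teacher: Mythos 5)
Your proof is correct and follows essentially the same approach as the paper: applying Hochster's formula, establishing the homotopy equivalence $(\Gamma_s)_V \simeq \Delta_{\delta_s(V)}$, and then partitioning the sum over $V$ by the image $U = \delta_s(V)$ using the lifting function $\ell_s$. Your contiguity argument via a chosen section $\sigma$ of the depolarization map is a clean, rigorous way to establish the homotopy equivalence that the paper sketches more briefly (by contracting fibers one at a time), and your regularity argument via the positivity of the diagonal entries $\ell_s(j,j) = (s+1)^j$ is also the same in substance.
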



\begin{proof}
  Given a subset $V\subseteq \mathcal{J}_s (X)$, let $\Gamma_{s,V}$ denote
  the simplicial complex obtained by restricting $\Gamma_s$ to $V$.
  If $x\in \delta_s (V)$, then the restriction of $\Gamma_{s,V}$ to
  $\delta_s^{-1} (\{x\})$ is a simplex because
  $x_0\cdots x_s \not\in \mathcal{P}_s (I)$, and therefore it is
  contractible. Repeating the same argument with all elements of
  $\delta_s (V)$, we deduce that $\Gamma_{s,V}$ is homotopy equivalent
  to $\Delta_{\delta_s (V)}$, the restriction of $\Delta$ to
  $\delta_s (V)$. It follows that $\Gamma_{s,V}$ and
  $\Delta_{\delta_s (V)}$ have the same reduced homology.

  We apply Hochster's formula \cite[Corollary 5.12]{MR2110098}, as well as the fact that we are working with $\Bbbk$-coefficients so cohomology and homology spaces have the same dimension. Then for $0\leqslant i\leqslant k$, we have
  \begin{equation*}
    \beta_{k-i,k} (\Bbbk [\Gamma_s]) =
    \sum_{\substack{V\subseteq \mathcal{J}_s (X),\\|V|=k}}
    \dim_\Bbbk \widetilde{H}_{i-1} (\Gamma_{s,V}; \Bbbk)
    = \sum_{\substack{V\subseteq \mathcal{J}_s (X),\\|V|=k}}
    \dim_\Bbbk \widetilde{H}_{i-1} (\Delta_{\delta_s (V)}; \Bbbk).
  \end{equation*}
  The simplicial complex $\Delta_{\delta_s(V)}$ only depends on the image $U=\delta_s(V)$ of the depolarization map. It then suffices to count the number of $V$ with the same image, and this is given exactly by the lifting function $\ell_s(j,k)$.
  Thus, we have
  \begin{equation*}
    \begin{split}
      \beta_{k-i,k} (\Bbbk [\Gamma_s])
      &= \sum_{j\geqslant 0} \sum_{\substack{U\subseteq X,\\|U|=j}}
      \dim_\Bbbk \widetilde{H}_{i-1} (\Delta_U; \Bbbk) \, \ell_s (j,k)\\
      &= \sum_{j\geqslant 0} \ell_s (j,k) \sum_{\substack{U\subseteq X,\\|U|=j}}
      \dim_\Bbbk \widetilde{H}_{j-(j-i)-1} (\Delta_U; \Bbbk)\\
      &= \sum_{j\geqslant 0} \beta_{j-i,j} (\Bbbk [\Delta]) \, \ell_s (j,k),
    \end{split}
  \end{equation*}
  where the sum is finite because $\ell_s (j,k) = 0$ when
  $j>k$. Therefore, the entry in row $i$ and column $k$ of the matrix
  $[\beta_{k-i,k} (\Bbbk [\Gamma_s])]$ is the product
  of the $i$-th row of
  $[\beta_{j-i,j} (\Bbbk [\Delta])]$ with the $k$-th
  column of the lifting matrix $L_s$. This proves the equality of
  matrices in the statement of the theorem.

  If $\Bbbk [\Delta]$ has regularity $r$, then all
  entries of $[\beta_{j-i,j} (\Bbbk [\Delta])]$ with
  row index $i>r$ are zero, and there is an integer
  $\jmath' \geqslant 0$ corresponding to a nonzero entry
  $\beta_{\jmath'-r,\jmath'} (\Bbbk [\Delta])$ in row $r$
  and column $\jmath'$.  In light of the matrix equality proved
  above, all entries of
  $[\beta_{k-i,k} (\Bbbk [\Gamma_s])]$ with row index
  $i>r$ are also zero. Moreover, we have
  \begin{equation*}
    \beta_{\jmath'-r,\jmath'} (\Bbbk [\Gamma_s]) \geqslant
    \beta_{\jmath'-r,\jmath'} (\Bbbk [\Delta])
    \, \ell_s (\jmath',\jmath')=
    \beta_{\jmath'-r,\jmath'} (\Bbbk [\Delta]) \neq 0,
  \end{equation*}
  which shows $[\beta_{k-i,k} (\Bbbk [\Gamma_s])]_{i,k\geqslant 0}$
  has a nonzero entry in row $r$ and column $\jmath'$.
  Therefore, the regularity of $\Bbbk [\Gamma_s]$
  is $r$.
\end{proof}

\begin{example}\label{exa:6}
  Consider the ideals and simplicial complexes of \Cref{exa:5}. The
  ring $\Bbbk [\Delta]$ has the following Betti diagram.
  \begin{equation*}
    \begin{matrix}
      & 0 & 1 & 2\\
      \text{total:} & 1 & 3 & 2\\
      0: & 1 & . & .\\
      1: & . & 2 & 1\\
      2: & . & 1 & 1
    \end{matrix}
  \end{equation*}
  Multiplying the corresponding matrix of Betti numbers with the
  lifting matrix of the appropriate size gives
  \begin{equation*}
    \begin{split}
      &\begin{pmatrix}
         1 & 0 & 0 & 0 & 0\\
         0 & 0 & 2 & 1 & 0\\
         0 & 0 &0 & 1 & 1
       \end{pmatrix}
        \begin{pmatrix}
          1&0&0&0&0&0&0&0&0\\
          0&2&1&0&0&0&0&0&0\\
          0&0&4&4&1&0&0&0&0\\
          0&0&0&8&12&6&1&0&0\\
          0&0&0&0&16&32&24&8&1
        \end{pmatrix}\\
      =&
         \begin{pmatrix}
           1&0&0&0&0&0&0&0&0\\
           0&0&8&16&14&6&1&0&0\\
           0&0&0&8&28&38&25&8&1
         \end{pmatrix}.
    \end{split}
  \end{equation*}
  Hence, the ring $\Bbbk [\Gamma_1]$ has the following Betti diagram.
  \begin{equation*}
    \begin{matrix}
      & 0 & 1 & 2 & 3 & 4 & 5 & 6\\
      \text{total:} & 1 & 16 & 44 & 52 & 31 & 9 & 1\\
      0: & 1 & . & . & . & . & . & .\\
      1: & . & 8 & 16 & 14 & 6 & 1 & .\\
      2: & . & 8 & 28 & 38 & 25 & 8 & 1
    \end{matrix}
  \end{equation*}
  Similarly, multiplying the matrix of Betti numbers with the second
  order lifting matrix of the appropriate size gives
  \begin{equation*}
    \begin{split}
      &\begin{pmatrix}
         1 & 0 & 0 & 0 & 0\\
         0 & 0 & 2 & 1 & 0\\
         0 & 0 &0 & 1 & 1
       \end{pmatrix}
        \begin{pmatrix}
          1&0&0&0&0&0&0&0&0&0&0&0&0\\
          0&3&3&1&0&0&0&0&0&0&0&0&0\\
          0&0&9&18&15&6&1&0&0&0&0&0&0\\
          0&0&0&27&81&108&81&36&9&1&0&0&0\\
          0&0&0&0&81&324&594&648&459&216&66&12&1
        \end{pmatrix}\\
      =&
         \begin{pmatrix}
           1&0&0&0&0&0&0&0&0&0&0&0&0\\
           0&0&18&63&111&120&83&36&9&1&0&0&0\\
           0&0&0&27&162&432&675&684&468&217&66&12&1
         \end{pmatrix}.
    \end{split}
  \end{equation*}
  Therefore, the ring $\Bbbk [\Gamma_2]$ has the following Betti
  diagram.
  \begin{equation*}
    \begin{matrix}
      & 0 & 1 & 2 & 3 & 4 & 5 & 6 & 7 & 8 & 9 & 10\\
      \text{total:} & 1 & 45 & 225 & 543 & 795 & 767 & 504 & 226 & 67 & 12 & 1\\
      0: & 1 & . & . & . & . & . & . & . & . & . & .\\
      1: & . & 18 & 63 & 111 & 120 & 83 & 36 & 9 & 1 & . & .\\
      2: & . & 27 & 162 & 432 & 675 & 684 & 468 & 217 & 66 & 12 & 1
    \end{matrix}
  \end{equation*}
\end{example}

Let $I$ be a homogeneous ideal in a standard graded polynomial ring
$\Bbbk [X]$. The quotient $\Bbbk [X] / I$ is said to have a
$d$-\emph{linear resolution} for some positive integer $d$ if its only
nonzero Betti numbers $\beta_{i,j} (\Bbbk [X] / I)$ are those with
indices $i=j=0$, or $i>0$ and $j=i+d-1$ (see \cite[Exercise
4.1.17]{MR1251956}). Equivalently, $\Bbbk [X] / I$ has a
$d$-\emph{linear resolution} if and only if $I$ has a minimal set of
generators all of degree $d$ and $\Bbbk [X] / I$ has regularity $d-1$.

\begin{corollary}\label{cor:3}
  Let $I \subseteq \Bbbk [X]$ be a squarefree monomial ideal. Then $\Bbbk[X]/I$ has a $d$-linear resolution if and only if $\Bbbk[\mathcal{J}_s(X)]/\mathcal{P}_s(I)$ has a $d$-linear resolution for every $s\in\mathbb{N}$.
\end{corollary}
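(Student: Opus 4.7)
The plan is to reduce to the two equivalent conditions for a $d$-linear resolution spelled out just before the corollary, namely that $I$ is minimally generated in degree $d$ and that $\Bbbk[X]/I$ has regularity exactly $d-1$. These two conditions transfer cleanly between $I$ and $\mathcal{P}_s(I)$ via \Cref{cor:2} and \Cref{thm:9}, respectively, so I would prove the two implications separately.

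For the forward direction, I would suppose $\Bbbk[X]/I$ has a $d$-linear resolution and write $I=I(C)$ for a clutter $C$. The hypothesis forces every edge of $C$ to have cardinality $d$. By \Cref{cor:2}, every minimal generator of $\mathcal{P}_s(I(C))$ has the form $x_{1,i_1}\cdots x_{d,i_d}$ for some edge $\{x_1,\dots,x_d\}$ of $C$, hence has degree exactly $d$. Simultaneously, \Cref{thm:9} yields that $\Bbbk[\mathcal{J}_s(X)]/\mathcal{P}_s(I)$ has the same regularity as $\Bbbk[X]/I$, namely $d-1$. These two properties together are precisely what is needed to conclude that $\Bbbk[\mathcal{J}_s(X)]/\mathcal{P}_s(I)$ has a $d$-linear resolution.

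For the converse, I would simply specialize to $s=0$. Under the canonical identification $\mathcal{J}_0(X)\cong X$ sending $x_0\mapsto x$, \Cref{cor:2} shows that $\mathcal{P}_0(I(C))=I(C)=I$, since the minimal generators listed there reduce to $x_{1,0}\cdots x_{r,0}$ for each edge $\{x_1,\dots,x_r\}$ of $C$. Thus, if $\Bbbk[\mathcal{J}_s(X)]/\mathcal{P}_s(I)$ has a $d$-linear resolution for every $s\in\mathbb{N}$, the $s=0$ instance is exactly the $d$-linear resolution of $\Bbbk[X]/I$.

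There is no genuine obstacle: the corollary is a formal consequence of the two preceding results. The only small matter of care is to invoke the characterization of $d$-linearity as \emph{generators in degree $d$ plus regularity $d-1$}, rather than attempting to verify the vanishing of the full Betti table through the matrix identity of \Cref{thm:9}; the latter would work equally well since the matrix $L_s$ preserves the row structure (all zero rows remain zero), but the route via the two-part characterization is cleaner.
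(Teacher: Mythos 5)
Your proof is correct and follows essentially the same route as the paper: the backward direction via $\mathcal{P}_0(I)=I$, and the forward direction via the ``generated in degree $d$ plus regularity $d-1$'' characterization, using \Cref{cor:2} for the degree of generators and \Cref{thm:9} for the regularity. The paper's proof is just a slightly terser version of exactly this argument.
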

  
  

\begin{proof}
Since $\mathcal{P}_0(I)=I$, the backwards implication is immediate.
  In the forward direction,
  observe that for all $s\in\mathbb{N}$, the ideal $\mathcal{P}_s (I)$ is
  minimally generated in degree $d$ by \Cref{cor:2}, and
  $\Bbbk [\mathcal{J}_s(X)]/\mathcal{P}_s(I)$ has regularity $d-1$ by
  \Cref{thm:9}.
\end{proof}

We conclude this section with a combinatorial application to principal
jets of graphs.  Recall that a graph $G$ is \emph{chordal} if it
contains no induced cycle of length four or higher. We say a graph $G$
is \emph{cochordal} if its complement is chordal. A result of Fröberg
\cite{MR1171260} characterizes cochordal graphs as those whose edge
ideals have a linear resolution. We invite the reader to consult
\cite{MR3184120} for an introduction to this material.  \Cref{cor:3}
immediately implies the following result.

\begin{corollary}
  A graph $G$ is cochordal if and only if, for every $s\in\mathbb{N}$,
  the graph $\mathcal{P}_s (G)$ of principal $s$-jets of $G$ is
  cochordal.
\end{corollary}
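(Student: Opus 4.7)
The plan is to chain Fröberg's characterization of cochordal graphs with \Cref{cor:3} applied to the edge ideal of $G$. Since edge ideals are generated in degree $2$, Fröberg's theorem says that a graph $H$ is cochordal if and only if $\Bbbk[V(H)]/I(H)$ has a $2$-linear resolution.

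First I would observe that by \Cref{cor:2}, the ideal $\mathcal{P}_s(I(G))$ is minimally generated by monomials of the form $x_{i}y_{j}$ with $\{x,y\}\in E(G)$ and $i,j\in\{0,\dots,s\}$, so it is a squarefree monomial ideal generated in degree $2$. By \Cref{def:2} (or rather its analog for principal jets), this is precisely the edge ideal $I(\mathcal{P}_s(G))$ of the graph of principal $s$-jets. Hence Fröberg's theorem applies to $\mathcal{P}_s(G)$ as well: $\mathcal{P}_s(G)$ is cochordal if and only if $\Bbbk[\mathcal{J}_s(V(G))]/\mathcal{P}_s(I(G))$ has a $2$-linear resolution.

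Then the result follows by applying \Cref{cor:3} with $I=I(G)$ and $d=2$. Explicitly, $G$ cochordal is equivalent to $\Bbbk[V(G)]/I(G)$ having a $2$-linear resolution (Fröberg), which by \Cref{cor:3} is equivalent to $\Bbbk[\mathcal{J}_s(V(G))]/\mathcal{P}_s(I(G))$ having a $2$-linear resolution for every $s\in\mathbb{N}$, which in turn is equivalent to $\mathcal{P}_s(G)$ being cochordal for every $s\in\mathbb{N}$ (Fröberg again).

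There is no substantive obstacle here: the argument is a direct two-step application of Fröberg's theorem sandwiching \Cref{cor:3}. The only thing to double-check is the compatibility of notation and that $\mathcal{P}_s(I(G))$ really is the edge ideal of $\mathcal{P}_s(G)$, which is immediate from the definition of $\mathcal{P}_s(G)$ together with \Cref{cor:2} guaranteeing generation in degree $2$.
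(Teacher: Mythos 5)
Your proof is correct and is precisely the argument the paper has in mind: the paper states that \Cref{cor:3} ``immediately implies'' the result after recalling Fröberg's characterization, and you have simply written out that chain — Fröberg for $G$, then \Cref{cor:3} with $d=2$, then Fröberg for $\mathcal{P}_s(G)$ (using \Cref{cor:2} to confirm $\mathcal{P}_s(I(G))$ is generated in degree 2 and is the edge ideal of $\mathcal{P}_s(G)$). No gaps; the compatibility check you flag is indeed immediate from the definitions.
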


\section{Open Questions}
\label{sec:open-questions}

We conclude with some questions related to the work of the previous
sections.  The first two are concerned with the primary components of
jets of squarefree monomial ideals and the corresponding associated
primes.

\begin{question}
  Let $C$ be a clutter and let $s\in\mathbb{N}$.  Does the ideal
  $\mathcal{J}_s (I(C))$ have any embedded components?
\end{question}

\begin{question}
  Let $C$ be a clutter and let $s\in\mathbb{N}$. Suppose that we have
  an explicit description for a minimal vertex cover $W$ of
  $\mathcal{J}_s (C)$. What is the primary component of
  $\mathcal{J}_s (I(C))$ corresponding to the minimal prime
  $\langle W\rangle$? What is the multiplicity of the scheme
  $\mathcal{V} (\mathcal{J}_s (I(C)))$ along the irreducible component
  $\mathcal{V} (\langle W\rangle)$?
\end{question}

For the determinantal variety of a generic matrix, there is a
correspondence between certain components of its jet scheme and those
of the jet scheme of a smaller matrix \cite[Theorem
2.8]{MR2100311}. We wonder if a recursive construction might exist for
the components of jets of squarefree monomial ideals, which can be
rephrased combinatorially.

\begin{question}
  Let $C$ be a clutter and let $s\in \mathbb{N}$. Can the minimal
  vertex covers of $\mathcal{J}_s (C)$ be described in terms of the
  minimal vertex covers of some subclutter of $C$?
\end{question}

Consider a \emph{$d$-uniform} clutter $C$ with vertex set $X$, i.e., a
clutter whose edges all contain $d$ vertices. In analogy with the
notion of very well-covered graph, one could say $C$ is very
well-covered if all its minimal vertex covers have cardinality
$\frac{1}{d} |X|$. We could not find this definition in the
literature, so its validity should be investigated before
proceeding. Assuming this definition is acceptable, one might wonder
if \Cref{thm:6} extends to clutters.

\begin{question}
  Are the jets of a very well-covered clutter also very well-covered?
\end{question}

Let $I$ be the Stanley--Reisner ideal of a simplicial complex
$\Delta$. One can define the jets and principal jets of $\Delta$ via the simplicial complexes associated with the squarefree
monomial ideals $\sqrt{\mathcal{J}_s (I)}$ and $\mathcal{P}_s (I)$.

\begin{question}
  What properties of the original complexes do the (principal) jets
  preserve? Can the invariants of (principal) jets of complexes be
  computed in terms of the corresponding invariants of the original
  complexes?
\end{question}

C.~Yuen developed the theory of truncated wedge schemes
\cite{math/0608633}, a higher dimensional analog of jet schemes.  As
is the case for jets, the reduced scheme structure of a truncated
wedge scheme of any monomial scheme is itself a monomial scheme. Thus,
one can define the clutter of $s$-wedges of any clutter in analogy
with \Cref{def:2}.
  
\begin{question}
  What do these clutters of wedges look like? What properties of the
  original clutters do they preserve? Can their invariants be computed
  in terms of the corresponding invariants of the original clutters?
\end{question}


\newcommand{\etalchar}[1]{$^{#1}$}
\def\cprime{$'$} \def\Dbar{\leavevmode\lower.6ex\hbox to 0pt{\hskip-.23ex
  \accent"16\hss}D} \def\Dbar{\leavevmode\lower.6ex\hbox to 0pt{\hskip-.23ex
  \accent"16\hss}D} \def\Dbar{\leavevmode\lower.6ex\hbox to 0pt{\hskip-.23ex
  \accent"16\hss}D} \def\Dbar{\leavevmode\lower.6ex\hbox to 0pt{\hskip-.23ex
  \accent"16\hss}D}

\end{document}